\newcommand{\e}{\ensuremath{\epsilon}}
\newcommand{\kk}{\ensuremath{\tilde{k}}}
\newcommand{\rto}{\ensuremath{\rightarrow}}
\newcommand{\lem}{\ensuremath{\lesssim}}
\newcommand{\dt}{\ensuremath{\delta_{\theta}}}
\newtheorem{theorem}{Theorem}[section]
\newtheorem{lemma}[theorem]{Lemma}
\newtheorem{proposition}[theorem]{Proposition}
\newtheorem{remark}[theorem]{Remark}
\numberwithin{equation}{section}
\title{\Large Well-Posedness of the Nonlinear Unsteady Prandtl Equations with Robin Boundary Condition in Weighted Sobolev Spaces}
\author{\normalsize Fuzhou Wu\thanks{E-mail: michael8723@gmail.com; fuzhou.wu@yahoo.com; wufz12@mails.tsinghua.edu.cn} \\
\small\it  Yau Mathematical Sciences Center, Tsinghua University\\
\small\it  Beijing 100084, China
}
\date{}
\begin{document}
\maketitle
\setlength\parindent{2em}
\setlength\parskip{5pt}

\begin{abstract}
\normalsize{
In this paper, we study the well-posedness of classical solutions to the nonlinear unsteady Prandtl equations with Robin boundary condition in half space in weighted Sobolev spaces. We firstly investigate the monotonic shear flow with Robin boundary condition and the linearized Prandtl-type equations with Robin boundary condition in weighted Sobolev spaces. Due to the degeneracy of the Prandtl equations and the loss of regularity, we apply the Nash-Moser-H$\ddot{o}$rmander iteration scheme to prove the existence of classical solutions to the nonlinear Prandtl equations with Robin boundary condition when the initial data is a small perturbation of a monotonic shear flow satisfying Robin boundary condition. The uniqueness and stability are also proved in the weighted Sobolev spaces. The nonlinear Prandtl equations with Robin boundary arise in the inviscid limit of incompressible Navier-Stokes equations with Navier-slip boundary condition for which the slip length is square root of viscosity. Our results are also valid for the Dirichlet boundary case.
}
\\
\par
\small{
\textbf{Keywords}: Prandtl equations with Robin boundary condition, well-posedness in weighted Sobolev space, linearized Prandtl-type equations, Nash-Moser-H$\ddot{o}$rmander iteration, monotonic shear flow, loss of regularity
}
\end{abstract}

\tableofcontents

\section{Introduction}
In this paper, we establish the well-posedness of classical solutions to the nonlinear unsteady Prandtl equations with Robin boundary condition in half space:
\begin{equation}\label{Sect1_Prandtl_Equation}
\left\{\begin{array}{ll}
u_t + u u_x + v u_y + p_x = u_{yy},\quad (x,y)\in\mathbb{R}_{+}^2, \ t>0, \\[9pt]
u_x + v_y =0, \\[9pt]
(u_y - \beta u)|_{y=0} = 0,\quad v|_{y=0} =0,\\[9pt]
\lim\limits_{y\rto +\infty}u = U(t,x), \\[10pt]
u|_{t=0} = u_0(x,y),
\end{array}\right.
\end{equation}
where $u,v$ denote the tangential and normal velocities of the boundary layer, with $y$ being the scaled normal variable to the boundary, the parameter $\beta>0$. $U,p$ denote the values on the boundary of the tangential velocity and pressure of the Euler flow which satisfies the Bernoulli's law:
\begin{equation}\label{Sect1_Bernoulli_Law}
\begin{array}{ll}
U_t + U U_x + p_x = 0.
\end{array}
\end{equation}

The nonlinear Prandtl equations with Robin boundary condition are proposed in $\cite{Wang_Wang_Xin_2010}$, which studied the asymptotic behavior of the solutions to incompressible Navier-Stokes equations with Navier-slip boundary condition in which the slip length depends on the viscosity:
\begin{equation}\label{Sect1_NS_Equation}
\left\{\begin{array}{ll}
u_t + u u_x + v u_y + p_x = \e\triangle u, \\[7pt]
v_t + u v_x + v v_y + p_y = \e\triangle v, \\[7pt]
u_x + v_y =0, \\[7pt]
(\e^{\gamma}u_y - \beta u)|_{y=0} = 0,\quad v|_{y=0} =0, \\[7pt]
(u,v)|_{t=0} = (u_0,v_0)(x,y).
\end{array}\right.
\end{equation}
When $\gamma>\frac{1}{2}$ (super-critical), the leading boundary layer profile satisfies the Prandtl equations with Dirichlet boundary condition.
When $\gamma=\frac{1}{2}$ (critical), the leading boundary layer profile satisfies the Prandtl equations with Robin boundary condition $(\ref{Sect1_Prandtl_Equation})$.
When $\gamma<\frac{1}{2}$ (sub-critical), the leading boundary layer profile appears in the $O(\e^{1-2\gamma})$ order terms of the solutions and satisfies the linearized Prandtl equations.

For $(u_y - \beta u)|_{y=0} = 0$, $0<\beta<+\infty$ corresponds to Robin boundary condition.
$\beta=+\infty$ corresponds to Dirichlet boundary condition, since $(u-\frac{1}{\beta}u_y)|_{y=0} =0$.
While $\beta=0$ corresponds to Neumann boundary condition.
To our best knowledge, the Prandtl equations with Neumann boundary condition have no physical background, and their well-posedness is unknown in mathematical viewpoint. In this paper, the parameter $\beta\rto 0+$ is not allowed.

For the nonlinear Prandtl equations, the known results are mainly about the Dirichlet boundary case, where the solutions vanish on the boundary, namely $u|_{y=0}=v|_{y=0}=0$, we survey there some results:

After L. Prandtl (see \cite{Prandtl_1905}) proposed the Prandtl equations with Dirichlet boundary condition, their well-posedness theories attract much attention. Under Oleinik's monotonicity assumption $u_y>0$, the Prandtl equations with Dirichlet boundary condition can be reduced to a single quasilinear equation of $u_y$ via Crocco transformation, then O. A. Oleinik and V. N. Samokhin (see \cite{Oleinik_Samokhin_1999}) proved the local in time well-posedness. Under Oleinik's monotonicity assumption $u_y>0$ and favorable pressure condition $p_x\leq 0$, Xin and Zhang (see \cite{Xin_Zhang_2004}) proved the global existence of BV weak solutions via splitting viscosity method and Crocco transformation.
When the initial data is a small perturbation of a monotonic shear flow and Oleinik's monotonicity assumption is satisfied,
by using the energy method and Nash-Moser-H$\ddot{o}$rmander iteration, Alexander, Wang, Xu and Yang (see \cite{Alexandre_Yang_2014}) proved the well-posedness of the 2D Prandtl equations, Liu, Wang and Yang (see \cite{Liu_Wang_Yang_2014}) proved the well-posedness of the 3D Prandtl equations under constraints on its flow structure. This framework was also used to treat 2D compressible flow, see \cite{Wang_Xie_Yang_2014}.
Under Oleinik's monotonicity assumption, N. Masmoudi and T. K. Wong (see \cite{Masmoudi_Wong_2012}) proved local existence and uniqueness for 2D Prandtl equations in weighted Sobolev spaces via uniform regularity approach.

When Oleinik's monotonicity assumption is violated, E and Engquist (see \cite{E_Engquist_1997}) proved the unsteady Prandtl equations do not have global strong solutions, namely, local solutions either do not exist or blow up; Grenier (see \cite{Grenier_2000}), Hong and Hunter (see \cite{Hong_Hunter_2003}) proved the nonlinear instability of the unsteady Prandtl equations; \cite{Varet_Dormy_2010,Varet_Nguyen_2010,Guo_Nguyen_2011} proved ill-posedness of Prandtl equations in Sobolev spaces for some data or in some weak sense.

Additionally, as to the nonlinear steady Prandtl equations with Dirichlet boundary condition, O. A. Oleinik (see \cite{Oleinik_1963}) used von Mise transformation to prove strong solutions are global in space for favorable pressure $p_x\leq 0$. While for adverse pressure $p_x>0$, boundary layer separation may happen (see \cite{Caffarelli_E_1995}).

Without Oleinik's monotonicity assumption, the data and solutions are required to be in the analytic or Gevrey classes.
For the data that are analytic in both $x$ and $y$ variables, the abstract Cauchy-Kowalewski theorem (see \cite{Safonov_1995}) can be applied, then the local existence of analytic solutions is proved in \cite{Sammartino_Caflisch_1998,Lombardo_Cannone_Sammartino_2003} for the Dirichlet boundary case.
For the data that are analytic in $x$ variable and have Sobolev regularity in $y$ variable, the existence is proved in \cite{Kukavica_Vicol_2013,Zhang_Zhang_2014} by using the energy method. For the data that belong to the Gevrey class $\frac{7}{4}$ in $x$ variable,
D. G$\acute{e}$rard-Varet and N. Masmoudi (see \cite{Varet_Masmoudi_2013}) proved local well-posedness. As to the Gevrey class regularity, see \cite{Li_Wu_Xu_2015}.

For the Prandtl equations with Robin boundary condition $(\ref{Sect1_Prandtl_Equation})$, the only result in the present is in the analytic setting and for the analytic data, the local existence of analytic solutions is proved in \cite{Ding_Jiang_2014} by using the abstract Cauchy-Kowalewski theorem.

While the well-posedness of the Prandtl equations with Robin boundary condition $(\ref{Sect1_Prandtl_Equation})$ in Sobolev spaces has been widely open for some years. There are two main difficulties, one is the degeneracy in $x$ variable of the Prandtl equations, the other is the estimates of the solutions and their tangential derivatives, normal derivatives on the boundary. In this paper, we establish the well-posedness of $(\ref{Sect1_Prandtl_Equation})$ in weighted Sobolev spaces, our approach is to apply the weighted energy estimate method and the Nash-Moser-H$\ddot{o}$rmander iteration scheme, since Crocco transformation is useless for the Prandtl equations with Robin boundary condition. Our initial data is required to be a small perturbation of the monotonic shear flow $(u^s,0)$, where $u^s$ satisfies the following monotonic conditions:
\begin{equation}\label{Sect1_Data_Monotone}
\begin{array}{ll}
u^s>0,\quad \partial_y u^s>0,\quad \beta-\frac{\partial_{yy}u^s}{\partial_y u^s}\geq \delta_s >0,\quad \forall y\in [0,+\infty),\ \forall t\in [0,T].
\end{array}
\end{equation}
Without the smallness of the perturbation around the monotonic shear flow, $u_x$ or $v$ will blow up in the interior of the boundary layer in general.

Letting $w=u_y -\beta u$, the Prandtl system $(\ref{Sect1_Prandtl_Equation})$ can be transformed into a new degenerate system of $w$ with Dirichlet boundary condition $w|_{y=0}=0$. However, the attempt to establish the well-posedness of this new system of $w$ will fail, because the weighted energy estimates never close. Therefore, we will study the Prandtl system $(\ref{Sect1_Prandtl_Equation})$ without using this variable $w=u_y -\beta u$.

In order to prove the existence of classical solutions to the Prandtl system $(\ref{Sect1_Prandtl_Equation})$ and overcome the two main difficulties mentioned above, we use the Nash-Moser-H$\ddot{o}$rmander iteration scheme, while in its iteration process, we need the weighted a priori estimates and well-posedness of the linearized Prandtl-type equations with Robin boundary condition, zero data and nonzero force, see the equations $(\ref{Sect3_Linearized_Prandtl_ZeroData})$.

In order to prove the uniqueness and stability of classical solutions to the Prandtl system $(\ref{Sect1_Prandtl_Equation})$, we need the weighted a priori estimates and well-posedness of the linearized Prandtl-type equations with Robin boundary condition, nonzero data and zero force, see the equations $(\ref{Sect3_Linearized_Prandtl_ZeroForce})$.

Thus, the investigation of the well-posedness of the linearized Prandtl-type equations with Robin boundary condition is an important part of this paper. We need to transform the linearized Prandtl-type equations with Robin boundary condition into the appropriate equations with the appropriate boundary conditions, see the equations $(\ref{Sect3_VorticityEq}),(\ref{Sect3_VorticityEq_ZeroForce})$ respectively, such that the weighted energy estimates can proceed. By coupling the estimates in the interior and the estimates on the boundary, we are able to get the wanted a priori estimates.

Additionally, some mollified variables and mollified quantities arise in the Nash-Moser-H$\ddot{o}$rmander iteration scheme.
In order to make some mollified variables keep some properties, we introduce special extension operators $E^u,\ E^v$ and special smoothing operators $S_{\theta}^u,\ S_{\theta}^v$, except for the usual operators $E,\ S_{\theta}$. However, $S_{\theta}^u,\ S_{\theta}^v$ do not lose regularity.

In the Nash-Moser-H$\ddot{o}$rmander iteration process, the variables and mollified quantities are bounded by the powers of $\theta_n$.
Based on the estimates for these variables and mollified quantities, we can prove the convergence of the Nash-Moser-H$\ddot{o}$rmander iteration, which implies the existence of classical solutions to the nonlinear Prandtl equations with Robin boundary condition.

To start the Nash-Moser-H$\ddot{o}$rmander iteration process, it needs the zero-th order approximate solution which satisfies Robin boundary condition. By using time derivatives and the induction method, we are able to construct the zero-th order approximate solution from the initial data which satisfy Robin boundary condition.

For simplicity, we consider the uniform Euler flow $U=1$, which implies $p$ is a constant, by the Bernoulli's law.
However, the following IBVP keeps the two main difficulties mentioned above.
\begin{equation}\label{Sect1_PrandtlEq}
\left\{\begin{array}{ll}
u_t + u u_x + v u_y = u_{yy},\quad (x,y)\in\mathbb{R}_{+}^2,\ t>0, \\[9pt]
u_x + v_y =0, \\[9pt]
(u_y - \beta u)|_{y=0} = 0,\quad v|_{y=0} =0,\\[9pt]
\lim\limits_{y\rto +\infty}u = 1, \\[9pt]
u|_{t=0} = u_0(x,y).
\end{array}\right.
\end{equation}

\vspace{0.15cm}
In this paper, the time derivatives of initial data $u_0^s$ can be expressed in terms of the space derivatives of $u_0^s$ by solving the heat equation, the time derivatives of initial data $u_0$ can be expressed in terms of the space derivatives of $u_0,v_0$ by solving the Prandtl equations, the time derivatives of $\tilde{u}_0 =u_0 - u_0^s$ can be expressed in terms of the space derivatives of $\tilde{u}_0,u_0^s$ (see $(\ref{Sect4_Zeroth_Order_Sol_2})_1$).
Now we define the following functional spaces:
\begin{equation}\label{Sect1_Energy_Definition}
\begin{array}{ll}
\|u\|_{\mathcal{A}_{\ell}^k(\Omega_T)} =
\bigg(\sum\limits_{0\leq k_1 + [\frac{k_2+1}{2}]\leq k}
\|<y>^{\ell} \partial_{(t,x)}^{k_1}\partial_y^{k_2} u\|_{L^2([0,T]\times \mathbb{R}_{+}^2)}^2
\bigg)^{\frac{1}{2}}, \\[16pt]

\big\|u|_{t=t_1}\big\|_{\mathcal{A}_{\ell}^k(\Omega,t=t_1)} =
\bigg(\sum\limits_{0\leq k_1 + [\frac{k_2+1}{2}]\leq k}
\|<y>^{\ell} \partial_{(t,x)}^{k_1}\partial_y^{k_2} u|_{t=t_1}\|_{L^2(\mathbb{R}_{+}^2)}^2
\bigg)^{\frac{1}{2}}, \\[16pt]

\big\|u|_{y=0}\big\|_{A^k([0,T]\times \partial\Omega)} =
\bigg(\sum\limits_{0\leq m\leq k}
\|\partial_{(t,x)}^{m} u|_{y=0}\|_{L_{t,x}^2([0,T]\times \mathbb{R})}^2
\bigg)^{\frac{1}{2}}, \\[15pt]

\big\|u|_{y=0,t=t_1}\big\|_{A^k(\partial\Omega,t=t_1)} =
\bigg(\sum\limits_{0\leq m\leq k}
\|\partial_{(t,x)}^{m} u|_{y=0,t=t_1}\|_{L_{x}^2(\mathbb{R})}^2
\bigg)^{\frac{1}{2}}, \\[15pt]

\|u\|_{\mathcal{B}_{\lambda,\ell}^{k_1,k_2}(\Omega_T)} =
\bigg(\sum\limits_{0\leq m\leq k_1, 0\leq q\leq k_2}
\|e^{-\lambda t}<y>^{\ell} \partial_{(t,x)}^{m}\partial_y^{q} u\|_{L^2([0,T]\times \mathbb{R}_{+}^2)}^2
\bigg)^{\frac{1}{2}},
\end{array}
\end{equation}

\begin{equation*}
\begin{array}{ll}
\|u\|_{\tilde{\mathcal{B}}_{\lambda,\ell}^{k_1,k_2}(\Omega)} =
\bigg(\sum\limits_{0\leq m\leq k_1, 0\leq q\leq k_2}
\|e^{-\lambda t}<y>^{\ell} \partial_{(t,x)}^{m}\partial_y^{q} u\|_{L_t^{\infty}([0,T]; L_{x,y}^2(\mathbb{R}_{+}^2))}^2
\bigg)^{\frac{1}{2}}, \\[15pt]

\|u\|_{\mathcal{C}_{\ell}^k(\Omega_T)} =
\sum\limits_{0\leq k_1 + [\frac{k_2+1}{2}]\leq k}
\|<y>^{\ell} \partial_{(t,x)}^{k_1}\partial_y^{k_2} u\|_{L_y^2(L_{t,x}^{\infty})}, \\[15pt]

\|v\|_{\mathcal{D}_{\ell}^k(\Omega_T)} =
\sum\limits_{0\leq k_1 + [\frac{k_2+1}{2}]\leq k}
\|<y>^{\ell} \partial_{(t,x)}^{k_1}\partial_y^{k_2} v\|_{L_y^{\infty}(L_{t,x}^2)},
\end{array}
\end{equation*}
where $\Omega=\mathbb{R}_{+}^2,\Omega_T = [0,T]\times \mathbb{R}_{+}^2,\partial\Omega=\{(x,0)|x\in\mathbb{R}\}$, $<y>=\sqrt{1+|y|^2}$,
$0<T<+\infty$, $\ell>\frac{1}{2},\ \lambda >0$
and $k,k_1,k_2$ are non-negative integers. The homogeneous norms $\|\cdot\|_{\dot{\mathcal{A}}_{\ell}^k},\, \|\cdot\|_{\dot{A}^k},\, \|\cdot\|_{\dot{\mathcal{C}}_{\ell}^k},\, \|\cdot\|_{\dot{\mathcal{D}}_{\ell}^k}$
correspond to the summation $1\leq k_1 + [\frac{k_2 +1}{2}]\leq k$ in the definitions.
$\|f\|_{L_{\ell}^p(\mathbb{R}_{+}^2)} = \|<y>^{\ell} f\|_{L^p(\mathbb{R}_{+}^2)}$, where $1\leq p\leq +\infty$.

\vspace{0.3cm}
Noting the sense of the time derivatives of initial data for the Prandtl equations and the heat equation, we state the main results of this paper as follows:
\begin{theorem}\label{Sect1_Main_Thm}
Concerning the nonlinear unsteady Prandtl equations with Robin boundary condition $(\ref{Sect1_PrandtlEq})$,
giving any integer $k\geq 5$ and real number $\ell>\frac{1}{2}$,
we have the following existence, uniqueness and stability results.

$(1)$. For any $\delta_{\beta}>0$, $\delta_{\beta}\leq\beta<+\infty$, assume the initial data $u_0(x,y) = u_0^s(y)
+\tilde{u}_0(x,y)$ satisfies the following two conditions:

$(i)$ $u_0^s$ satisfies
\begin{equation}\label{Sect1_Condition1}
\left\{\begin{array}{ll}
u_0^s(y)>0,\quad \partial_y u_0^s(y)>0,\quad \beta -\frac{\partial_{yy}u_0^s(y)}{\partial_y u_0^s(y)}\geq \delta_{s,0}>0,\quad \forall y\in [0,+\infty), \\[13pt]
\partial_y^{2j}(\partial_y u_0^s(y) - \beta u_0^s(y))|_{y=0} =0,
\quad \forall 0\leq j\leq 2k+10, \\[12pt]

\lim\limits_{y\rto +\infty} u_0^s(y) =1, \\[12pt]

\|u_0^s-1\|_{L^2} + |u_0^s|_{\infty} + \|u_0^s\|_{\dot{\mathcal{C}}_{\ell}^{2k+11}} + \|\frac{\partial_{yy}u_0^s}{\partial_y u_0^s}\|_{\mathcal{C}_{\ell}^{2k+10}}\leq C,
\end{array}\right.
\end{equation}
for a fixed constant $C>0$.

$(ii)$ There exists a small constant $\e>0$ such that $\tilde{u}_0 = u_0 -u_0^s$ satisfies
\begin{equation}\label{Sect1_Condition2}
\left\{\begin{array}{ll}
\partial_y^{2j}(\partial_y \tilde{u}_0(x,y) - \beta \tilde{u}_0(x,y))|_{y=0} =0,
\quad \forall 0\leq j\leq 2k+8, \\[11pt]
\lim\limits_{y\rto +\infty} \tilde{u}_0(x,y) =0, \\[11pt]

\|\tilde{u}_0\|_{\mathcal{A}_{\ell}^{2k+9}(\mathbb{R}_{+}^2,t=0)}
+ \|\frac{\partial_y \tilde{u}_0}{\partial_y u_0^s}\|_{\mathcal{A}_{\ell}^{2k+9}(\mathbb{R}_{+}^2,t=0)}
 \leq \e.
\end{array}\right.
\end{equation}

Then there exists $T\in (0,+\infty)$, such that the Prandtl system $(\ref{Sect1_PrandtlEq})$ admits a unique classical solution $(u, v)$ satisfying
\begin{equation}\label{Sect1_Solution_Monotonicity}
\begin{array}{ll}
u>0,\quad \partial_y u>0,\quad \beta-\frac{\partial_{yy}u}{\partial_y u}\geq\delta>0,
\end{array}
\end{equation}
and
\begin{equation}\label{Sect1_Solution_Regularity}
\begin{array}{ll}
u-u^s \in \mathcal{A}_{\ell}^k([0,T]\times\mathbb{R}_{+}^2), \hspace{1cm}
\partial_y(u-u^s),\ \frac{\partial_y(u-u^s)}{\partial_y u^s} \in \mathcal{A}_{\ell}^k([0,T]\times\mathbb{R}_{+}^2), \\[10pt]

v \in \mathcal{D}_0^{k-1}([0,T]\times\mathbb{R}_{+}^2), \hspace{1.46cm}
\partial_y v,\ \partial_{yy} v \in \mathcal{A}_{\ell}^{k-1}([0,T]\times\mathbb{R}_{+}^2), \\[10pt]

\partial_y^{j} u|_{y=0}- \partial_y^{j}u^s|_{y=0} \in A^{k-[\frac{j+1}{2}]}([0,T]\times\mathbb{R}), \hspace{0.3cm} 0\leq j\leq 2k, \\[10pt]

\partial_y^{j+1} v|_{y=0} \in A^{k-1-[\frac{j+1}{2}]}([0,T]\times\mathbb{R}), \hspace{0.3cm} 0\leq j\leq 2k-2.
\end{array}
\end{equation}

$(2)$. For any $\delta_{\beta}>0$, $\delta_{\beta}\leq\beta<+\infty$, the classical solution to $(\ref{Sect1_PrandtlEq})$ is stable with respect to the initial data in the following sense: for any given two initial data
\begin{equation*}
\begin{array}{ll}
u_0^1 = u_0^s + \tilde{u}_0^1,\qquad u_0^2 = u_0^s + \tilde{u}_0^2,
\end{array}
\end{equation*}
if $u_0^s$ satisfies $(\ref{Sect1_Condition1})$ and $\tilde{u}_0^1, \tilde{u}_0^2$ satisfy $(\ref{Sect1_Condition2})$, then for all $p\leq k-2$, the corresponding solutions $(u^1,v^1)$ and $(u^2,v^2)$ of the Prandtl system $(\ref{Sect1_PrandtlEq})$ satisfy
\begin{equation}\label{Sect1_Stability}
\begin{array}{ll}
\|u^1-u^2\|_{\mathcal{A}_{\ell}^p([0,T]\times\mathbb{R}_{+}^2)}
+ \|\frac{\partial_y(u^1- u^2)}{\partial_y u^s}\|_{\mathcal{A}_{\ell}^p([0,T]\times\mathbb{R}_{+}^2)} \\[8pt]\quad
+\|v^1-v^2\|_{\mathcal{D}_0^{p-1}([0,T]\times\mathbb{R}_{+}^2)}
+ \sum\limits_{j=0}^{2p}\big\|\partial_y^j u^1|_{y=0} - \partial_y^j u^2|_{y=0} \big\|_{A^{p-[\frac{j+1}{2}]}([0,T]\times\mathbb{R})}
 \\[15pt]

\leq C(T,\e,u_0^s)\Big\|\partial_y (\frac{u_0^1-u_0^2}{\partial_y(u_0^1 + u_0^2)})\Big\|_{\mathcal{A}_{\ell}^{p}(\mathbb{R}_{+}^2,t=0)} \\[15pt]\quad
+ \frac{C(T,\e,u_0^s)}{\max\{\sqrt{\beta -C_{\eta}},\sqrt{\delta}\}} \Big\|\partial_y (\frac{u_0^1-u_0^2}{\partial_y(u_0^1 + u_0^2)})|_{y=0}\Big\|_{A^p(\mathbb{R},t=0)} \ .
\end{array}
\end{equation}

$(3)$. As $\beta\rto +\infty$, $\partial_t^j u^s|_{y=0}=O(\frac{1}{\beta}),\ 0\leq j\leq k$, $\big\|u|_{y=0}-u^s|_{y=0}\big\|_{A^{k}} =O(\frac{1}{\sqrt{\beta}})$ and $(u,v)$ satisfies $(\ref{Sect1_Solution_Regularity})$ uniformly.
When $\beta= +\infty$, $(u,v)$ satisfies $(\ref{Sect1_Solution_Regularity})$ and for $p\leq k-2$,
\begin{equation}\label{Sect1_Solution_Stable_Dirichlet}
\begin{array}{ll}
\|u^1-u^2\|_{\mathcal{A}_{\ell}^p([0,T]\times\mathbb{R}_{+}^2)}
+ \|\frac{\partial_y(u^1- u^2)}{\partial_y u^s}\|_{\mathcal{A}_{\ell}^p([0,T]\times\mathbb{R}_{+}^2)}
+\|v^1-v^2\|_{\mathcal{D}_0^{p-1}([0,T]\times\mathbb{R}_{+}^2)} \\[8pt]\quad
+ \sum\limits_{j=0}^{2p}\big\|\partial_y^j u^1|_{y=0} - \partial_y^j u^2|_{y=0} \big\|_{A^{p-[\frac{j+1}{2}]}([0,T]\times\mathbb{R})}
\leq C\Big\|\partial_y (\frac{u_0^1-u_0^2}{\partial_y(u_0^1 + u_0^2)})\Big\|_{\mathcal{A}_{\ell}^{p}(\mathbb{R}_{+}^2,t=0)} \ .
\end{array}
\end{equation}
\end{theorem}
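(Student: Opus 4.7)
Setting $\tilde{u} = u - u^s$ and $\tilde{v} = v$, I rewrite $(\ref{Sect1_PrandtlEq})$ as a quasilinear perturbation of the shear flow $u^s$. The divergence free condition gives $\tilde{v}(t,x,y) = -\int_0^y \tilde{u}_x \, dy'$, so the term $\tilde{v}\,\partial_y u^s$ in the linearization costs one $x$-derivative, and no direct contraction in $\mathcal{A}_\ell^k$ can close. To bypass this loss of regularity, I would follow the route indicated in the introduction and apply the Nash-Moser-Hörmander iteration: at each step one solves a linearized Prandtl-type equation about a smoothed iterate $S_{\theta_n}^u u_n, S_{\theta_n}^v v_n$, and the derivative loss is absorbed against the rapidly growing parameters $\theta_n$. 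The whole scheme rests on two analytic blocks: (a) weighted a priori estimates for the linearized problem $(\ref{Sect3_Linearized_Prandtl_ZeroData})$ with zero data and nonzero forcing (needed for existence inside the iteration), and (b) analogous estimates for $(\ref{Sect3_Linearized_Prandtl_ZeroForce})$ with nonzero data and zero forcing (needed for uniqueness and stability).

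\textbf{Weighted a priori estimates for the linearized equations.} For (a) I would reformulate the linearized problem via the vorticity-like substitution $\omega = \partial_y \tilde{u} / \partial_y u^s$, arriving at the system $(\ref{Sect3_VorticityEq})$ where the Robin condition collapses into a homogeneous Dirichlet condition $\omega|_{y=0}=0$. The price is a zeroth-order coefficient containing $\beta - \partial_{yy}u^s / \partial_y u^s$, which is bounded below by $\delta_s > 0$ thanks to $(\ref{Sect1_Data_Monotone})$ and provides the crucial damping. Weighted $L^2$ energy estimates applied to $\langle y\rangle^\ell \partial_{(t,x)}^{k_1} \partial_y^{k_2}\omega$ then close the interior bound in $\mathcal{A}_\ell^k$, while taking the trace of the $\tilde{u}$-equation on $y=0$ and using the Robin identity produces a transport-parabolic system on $\partial\Omega$ for $\partial_{(t,x)}^m \tilde{u}|_{y=0}$, controlled in $A^k$ by the boundary damping $\beta$. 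Coupling interior and boundary estimates closes the bound in $\mathcal{A}_\ell^k \oplus A^k$; the higher normal derivatives in $(\ref{Sect1_Solution_Regularity})$ are then recovered algebraically from the equation itself, matching the asymmetric counting $k_1 + [\tfrac{k_2+1}{2}] \le k$ in the definition of $\mathcal{A}_\ell^k$.

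\textbf{Zero-th order approximant and convergence of the iteration.} The iteration requires an initial approximant $u^{(0)}$ whose traces $\partial_t^j u^{(0)}|_{t=0}$ match the formal time derivatives forced by $(u_0, v_0)$ and $(\ref{Sect1_PrandtlEq})$, while preserving the compatibility $\partial_y^{2j}(\partial_y u^{(0)} - \beta u^{(0)})|_{y=0}=0$ for all $0 \le j \le 2k+9$. Following $(\ref{Sect4_Zeroth_Order_Sol_2})$, I would produce it by Taylor expansion in $t$ about $t=0$ with coefficients obtained inductively, expressing $\partial_t^j \tilde{u}_0$ in terms of space derivatives of $\tilde{u}_0$ and $u_0^s$; the Robin compatibility at every even normal order is then verified by induction on $j$ from $(\ref{Sect1_Condition1})$ and $(\ref{Sect1_Condition2})$. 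Granted this approximant, the tame estimates of (a) let me bound the Nash-Moser iterates $(u_n, v_n)$ and the quadratic error terms by fixed negative powers of $\theta_n$; convergence of $\sum \theta_n^{-\alpha}$ then forces $(u_n, v_n)$ to be Cauchy in $\mathcal{A}_\ell^k \times \mathcal{D}_0^{k-1}$ and to converge to a classical solution. The monotonicity $(\ref{Sect1_Solution_Monotonicity})$ persists on a possibly short interval $[0,T]$ because the perturbation is kept small in a norm that controls the $L^\infty$-norms of $\partial_y\tilde{u}/\partial_y u^s$ and of $\partial_{yy}\tilde{u}/\partial_y u^s$.

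\textbf{Uniqueness, stability, and the Dirichlet limit.} For part (2), I would apply the a priori estimate for $(\ref{Sect3_Linearized_Prandtl_ZeroForce})$ to the difference $(u^1-u^2, v^1-v^2)$ and rewrite the right-hand side via the identity $u^1-u^2 = \int_0^y \partial_y(u^1+u^2)\cdot \partial_y\bigl(\tfrac{u^1-u^2}{\partial_y(u^1+u^2)}\bigr) dy'$, which produces exactly the two terms on the right of $(\ref{Sect1_Stability})$; the boundary term carries the factor $1/\max\{\sqrt{\beta-C_\eta}, \sqrt{\delta}\}$ because that is precisely the constant controlling the boundary damping in the energy identity. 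For part (3), I would check that each estimate in the scheme is uniform in $\beta$ for $\beta \ge \delta_\beta$, use the Robin identity $u^s|_{y=0} = \beta^{-1}\partial_y u^s|_{y=0}$ to derive the stated $O(\beta^{-1})$ and $O(\beta^{-1/2})$ rates, and pass $\beta \to +\infty$ in the limiting energy identity where the Robin boundary term drops out, yielding $(\ref{Sect1_Solution_Stable_Dirichlet})$. \emph{Main obstacle.} The hardest step will be closing the top-order linearized a priori estimate: the one-derivative loss through $\tilde{v}\,\partial_y u^s$ has to be balanced against the non-local coupling between the interior vorticity equation and the boundary Robin trace, all while keeping the weight $\langle y\rangle^\ell$ compatible with the highest-order terms. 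This is precisely why Crocco's transformation is unusable in the Robin setting and why the full Nash-Moser-Hörmander machinery, rather than a direct fixed point, is needed.
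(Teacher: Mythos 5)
Your macro-architecture matches the paper: set up the Nash--Moser--H\"ormander iteration around the shear flow, prove weighted a priori estimates for the two families of linearized Prandtl-type equations (zero data/nonzero force for the iteration, zero force/nonzero data for uniqueness and stability), build a Taylor-in-time zero-th order approximant that preserves the Robin compatibility, sum the iterates using tame estimates, and obtain the Dirichlet limit by tracking the $\beta$-dependence. That part is sound and is exactly the route the paper follows (Sections~3--6).

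However, there is a genuine and central error in your description of how the Robin boundary condition is handled at the level of the vorticity-like unknown. You assert that after substituting $\omega = \partial_y\tilde{u}/\partial_y u^s$ one reaches system $(\ref{Sect3_VorticityEq})$ ``where the Robin condition collapses into a homogeneous Dirichlet condition $\omega|_{y=0}=0$.'' Neither half of this is right. First, the paper's good unknown is $w = \partial_y\big(u/\tilde{u}_y\big)$, not $\partial_y u/\tilde{u}_y$; the extra term $-u\,\tilde{u}_{yy}/\tilde{u}_y^2$ is what makes the linearized operator transportable. Second, and more importantly, $w$ does \emph{not} satisfy a Dirichlet condition: by $(\ref{Sect3_Linearized_Prandtl_BC_2})$ one has $w|_{y=0}=\frac{u}{\tilde u_y}(\beta-\eta)\big|_{y=0}\neq 0$ in general, and the boundary condition $(\ref{Sect3_VorticityEq})_2$ is a nontrivial evolution equation on $\{y=0\}$ for $w|_{y=0}$ that couples back to the interior through $w_y|_{y=0}$ and the nonlocal term $\zeta|_{y=0}\int_0^\infty w\,d\tilde y$. (Your own formula does not give a Dirichlet condition either: on $y=0$, $\partial_y\tilde u/\partial_y u^s = \beta\tilde u/\beta u^s = \tilde u/u^s \neq 0$ in the Robin case.) The introduction explicitly flags that the change of variable $w=u_y-\beta u$, which \emph{does} convert Robin into Dirichlet, ``will fail, because the weighted energy estimates never close.'' Your proposal implicitly adopts that dead end. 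The actual closing mechanism is the coupled estimate $(\ref{Sect3_Estimate1_2})+(\ref{Sect3_Estimate1_4})$ (and $(\ref{Sect3_Estimate2_2})+(\ref{Sect3_Estimate2_4})$ for higher order): the interior integration by parts produces a boundary flux, and one multiplies the boundary evolution equation $(\ref{Sect3_VorticityEq})_2$ by $e^{-2\lambda t}w|_{y=0}$, integrates over $\mathbb R$, and adds so that the flux cancels, leaving the positive boundary energy $\int_{\mathbb R}\frac{e^{-2\lambda t}}{\beta-\eta|_{y=0}}(w|_{y=0})^2\,dx$. The norm $\frac{1}{\sqrt{\beta+C_\eta}}\big\|w|_{y=0}\big\|_{A^k}$ therefore appears on the \emph{left} of the a priori bound and is inseparable from the interior estimate — it is not an upstream consequence of a Dirichlet condition. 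This also explains the factor $1/\max\{\sqrt{\beta-C_\eta},\sqrt\delta\}$ in the stability estimate $(\ref{Sect1_Stability})$ and the fact that it disappears when $\beta=+\infty$, neither of which would arise if $w|_{y=0}=0$.

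Until this is corrected, the crucial Theorem~\ref{Sect3_Main_Estimate_Thm}/\ref{Sect3_Main_Estimate_ZeroForce_Thm} step of your argument is not in place: you need to derive the boundary evolution equation for $w|_{y=0}$ from the trace of the $u$-equation on $\{y=0\}$ combined with $u_y|_{y=0}=\beta u|_{y=0}$ and its tangential derivatives (this is Lemma~\ref{Sect3_Equation_Lemma} and the computation $(\ref{Sect3_Linearized_Prandtl_BC_3})$--$(\ref{Sect3_Linearized_Prandtl_BC_9})$), and then run the coupled interior--boundary energy method.
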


Next, we give some remarks on the results in Theorem $\ref{Sect1_Main_Thm}$:
\begin{remark}\label{Sect1_Remark}
(i). Though the Robin boundary condition is nontrivial, the Robin boundary case loses the same $k+9$ orders of regularity as the Dirichlet boundary case.
$T$ depends on $\e$ and whether the monotonicity conditions $(\ref{Sect1_Solution_Monotonicity})$ are violated.
If we only consider the Prandtl system $(\ref{Sect1_PrandtlEq})$ in a sufficiently short time interval,
$\beta -\frac{\partial_{yy}u_0^s}{\partial_y u_0^s}\geq \delta_{s,0}$ in $(\ref{Sect1_Condition1})$ can be confined on the boundary, and then
$\beta-\frac{\partial_{yy}u}{\partial_y u}\geq\delta$ in $(\ref{Sect1_Solution_Monotonicity})$ is satisfied on the boundary.

(ii). In the t,x-directions, the regularities and stability results can not be improved.
In the y-direction, the solutions have lower regularities on the boundary than in the interior.
When $\beta<+\infty$, due to the Robin boundary condition, $\partial_y u|_{y=0} -\partial_y u^s|_{y=0} \in A^k([0,T]\times\mathbb{R})$ and
$\big\|\partial_y u|_{y=0} -\partial_y u^s|_{y=0}\big\|_{A^p}$ is stable.

(iii). When $\beta\rto +\infty$, $(\ref{Sect1_Solution_Monotonicity})$ implies Oleinik's monotonicity assumption.
$(\ref{Sect1_Data_Monotone})$ and $(\ref{Sect1_Solution_Monotonicity})$ do not allow $\beta\rto 0+$, otherwise\, $u_y \leq \beta u|_{y=0}e^{(\beta-\delta)y}\rto 0$, but $u_y>0$. In $(\ref{Sect1_Condition2})$, $\e$ is small such that $\beta-\frac{\partial_{yy} u_0}{\partial_y u_0}|_{y=0} \geq\delta>0$, no degeneracy arises on the boundary, which is necessary to get the stability results $(\ref{Sect1_Stability})$.

(iv). If $\beta\neq +\infty$, $(\ref{Sect1_Solution_Stable_Dirichlet})$ does not hold.
$(\ref{Sect1_Solution_Stable_Dirichlet})$ implies the uniqueness and stability of solutions in the Dirichlet boundary case.
When $\beta= +\infty$, the compatibility conditions for the initial data become $\partial_y^{2j} u_0^s|_{y=0} =0,\ 0\leq j\leq 2k+10$ and $\partial_y^{2j} \tilde{u}_0|_{y=0} =0,\ 0\leq j\leq 2k+8$ for the Dirichlet boundary case.
\end{remark}

We introduce the generic constants and notations used in this paper: \\[4pt]
$C$:\hspace{0.1cm} the generic positive constant which may be different line by line. \\[2pt]
$C_{\ell}$: the positive constant which depends on $\int\limits_0^{\infty}<y>^{-2\ell}\,\mathrm{d}y$, where $\ell>\frac{1}{2}$. \\
$C_{\varrho}$: the positive constant which depends on the cut-off function $\varrho$. \\[3pt]
$C_s$: the positive constant relating to the shear flow $(u^s,0)$. \\[2pt]
$C_{\eta}$: the positive constant which bounds $|\eta|_{\infty}$. \\[2pt]
$C_k$:\hspace{0.1cm} the generic positive constant which appears in the $k$-th step of the Nash- \\[1pt]\indent
                     Moser-H$\ddot{o}$rmander iteration, is independent of the index $k$ and may be  \\[1pt]\indent
                     different line by line. \\[2pt]
$f\lem g$: there exists a constant $C>0$ such that $f\leq Cg$.

\vspace{0.2cm}
The rest of the paper is organized as follows: In Section 2, we investigate the shear flow with Robin boundary condition. In Section 3, we study the well-posedness of the linearized Prandtl equations with Robin boundary condition. In Section 4, we construct the approximate solutions to the nonlinear Prandtl equations with Robin boundary condition via the Nash-Moser-H$\ddot{o}$rmander iteration scheme. In Section 5, we prove the convergence of the Nash-Moser-H$\ddot{o}$rmander iteration and obtain the existence of classical solutions to the Prandtl equations with Robin boundary condition. In Section 6, we prove the uniqueness and stability of classical solutions to the Prandtl equations with Robin boundary condition.

\section{Well-Posedness of Shear Flow with Robin Boundary Condition}
In this section, we investigate the shear flow with Robin boundary condition where the tangential velocity is monotonic in the normal variable. The tangential velocity of the shear flow satisfies heat equation with Robin boundary condition. Let $(u^s(t,y),0)$ be the shear flow of the Prandtl equations with Robin boundary condition $(\ref{Sect1_PrandtlEq})$, then $u^s(t,y)$ satisfies the following IBVP:
\begin{equation}\label{Sect2_HeatEq}
\left\{\begin{array}{ll}
u_t^s = u_{yy}^s,\quad y>0,\ t>0, \\[8pt]
(u_y^s - \beta u^s)|_{y=0} = 0, \\[8pt]
\lim\limits_{y\rto +\infty}u^s = 1, \\[8pt]
u^s|_{t=0} = u_0^s(y).
\end{array}\right.
\end{equation}

Noting the sense of the time derivatives of initial data for the heat equation, we state the following theorem for $(\ref{Sect2_HeatEq})$:
\begin{theorem}\label{Sect2_Theorem}
Assume for any $k\geq 3,\ 0<\delta_{\beta}\leq\beta<+\infty,\ \ell>\frac{1}{2}$, the initial data $u_0^s(y)$ satisfies \\[3pt]
$(i)$ the monotonicity conditions:
\begin{equation}\label{Sect2_Data_Monotone}
\begin{array}{ll}
u_0^s(y) >0,\quad  \partial_y u_0^s(y) >0,\quad \beta -\frac{\partial_{yy} u_0^s(y)}{\partial_y u_0^s(y)} >0, \quad  \forall y\in [0,+\infty).
\end{array}
\end{equation}

\noindent
$(ii)$ the compatibility conditions:
\begin{equation}\label{Sect2_Data_Compatible}
\begin{array}{ll}
\lim\limits_{y\rto +\infty} u_0^s(y) =1,\quad \partial_y^{2j}(\partial_y u_0^s(y) - \beta u_0^s(y))|_{y=0} =0,
\quad \forall\ 0\leq j\leq k-1.
\end{array}
\end{equation}

\noindent
$(iii)$ the boundedness condition:
\begin{equation}\label{Sect2_Data_Condition}
\begin{array}{ll}
\|u_0^s-1\|_{L^2} + \|u_0^s\|_{\dot{\mathcal{C}}_{\ell}^k} + \|\frac{\partial_{yy}u_0^s}{\partial_y u_0^s}\|_{\mathcal{C}_{\ell}^{k-1}}\leq C,
\end{array}
\end{equation}
for some constant $C>0$.

Then for any fixed\, $T\in (0,+\infty)$, the problem $(\ref{Sect2_HeatEq})$ admits a unique classical solution $u^s$ satisfying the monotonicity:
\begin{equation}\label{Sect2_Solution_Monotone}
\begin{array}{ll}
u^s >0, \quad  \partial_y u^s >0,\quad \beta -\frac{u_{yy}^s}{u_y^s} >0,\hspace{0.9cm}  \forall y\in [0,+\infty),\ t\in [0,T], \\[10pt]
u^s|_{y=0} >0, \quad  \partial_y u^s|_{y=0} >0,\quad (\partial_y u^s -\beta u^s)|_{y=0} =0,\quad \forall t\in [0,T],
\end{array}
\end{equation}
and the estimates:
\begin{equation}\label{Sect2_Solution_Regularity}
\begin{array}{ll}
\|u^s-1\|_{L^2} +\|u^s\|_{\dot{\mathcal{C}}_0^k} 
+ \beta\sum\limits_{j=0}^{k-1}\big|\partial_t^{j} u^s|_{y=0}\big|_{\infty}
+ \sum\limits_{j=0}^{k-1}\big|\partial_t^{j}\partial_y u^s|_{y=0}\big|_{\infty} \\[12pt]\quad

+ \|\frac{\partial_{yy}u^s}{\partial_y u^s}\|_{\mathcal{C}_0^{k-1}}
+ \|\frac{\partial_{yt}u^s}{\partial_y u^s}\|_{\mathcal{C}_0^{k-2}}
+ \|\frac{\partial_{yyt}u^s}{\partial_y u^s}\|_{\mathcal{C}_0^{k-2}}
+ \sum\limits_{m=0}^{k-2}\big|\partial_t^m \frac{\partial_{yy}u^s}{\partial_y u^s}|_{y=0}\big|_{\infty} \\[12pt]\quad

+ \sum\limits_{m=0}^{k-2}\big|\partial_t^m \frac{\partial_{yt}u^s}{\partial_y u^s}|_{y=0}\big|_{\infty}
+ \sum\limits_{m=0}^{k-3}\big|\partial_t^m \frac{\partial_{yyt}u^s}{\partial_y u^s}|_{y=0}\big|_{\infty}
\leq C(T),\quad \forall t\in [0,T],
\end{array}
\end{equation}
for some constant $C(T)>0$.

Moreover, if $\partial_{yy} u_0^s \leq 0$, then $\partial_{yy} u^s \leq 0$.
If $\beta -\frac{\partial_{yy} u_0^s}{\partial_y u_0^s} \geq\delta_{s,0}>0$, then $\beta -\frac{\partial_{yy} u^s}{\partial_y u^s} \geq \delta_s >0$.
\end{theorem}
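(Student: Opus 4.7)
The plan is to treat $(\ref{Sect2_HeatEq})$ as a linear heat equation with homogeneous Robin boundary condition and prescribed far-field value $u^s\to 1$, and then promote the qualitative monotonicity via a sequence of maximum-principle arguments together with one nonlinear auxiliary quantity.

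First I would establish existence, uniqueness, and the weighted regularity $(\ref{Sect2_Solution_Regularity})$. Writing $u^s=1+\phi$ and subtracting a smooth reference profile that absorbs the Robin trace at $y=0$ reduces the problem to a homogeneous Robin heat equation with $L^2$ source, which is solved by classical semigroup or Galerkin theory. The compatibility conditions $\partial_y^{2j}(\partial_y u_0^s-\beta u_0^s)|_{y=0}=0$, $0\le j\le k-1$, are exactly what one obtains by repeatedly applying $\partial_t=\partial_y^2$ to the Robin condition, so they guarantee $C^k$ regularity up to the corner $\{t=0,y=0\}$. The interior $\mathcal{C}_0^k$ bound and the boundary traces in $(\ref{Sect2_Solution_Regularity})$ then follow from standard $L^2$ energy estimates on $\partial_t^j u^s$, using $u_t^s=u_{yy}^s$ to trade two $y$-derivatives for one $t$-derivative and one-dimensional Sobolev embedding in $t$ to pass from $L^2$ to $L^\infty$ on the trace; the quotient quantities $\partial_{yy}u^s/\partial_y u^s$ and their derivatives are then controlled by the quotient rule once $\partial_y u^s$ is bounded below, which is achieved in the next step.

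Next I would prove the monotonicity $(\ref{Sect2_Solution_Monotone})$. The positivity $u^s>0$ follows from the strong maximum principle: an interior minimum propagates back to $t=0$ where $u_0^s>0$, while a boundary minimum with value $m\le 0$ would force by Hopf's lemma $\partial_y u^s|_{y=0}<0$ strictly, whereas the Robin identity gives $\partial_y u^s|_{y=0}=\beta m\le 0$, yielding a contradiction after closing the equality case with the strong maximum principle. Then $w:=\partial_y u^s$ itself solves $w_t=w_{yy}$ with $w|_{y=0}=\beta u^s|_{y=0}>0$, $w|_{t=0}>0$, and $w\to 0$ at infinity, so $w>0$ by the minimum principle. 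Similarly, if $\partial_{yy}u_0^s\le 0$, then time-differentiating the Robin condition and using $\partial_t=\partial_y^2$ yields $(\partial_y(\partial_{yy}u^s)-\beta\partial_{yy}u^s)|_{y=0}=0$, so $\partial_{yy}u^s$ solves the same homogeneous Robin heat problem and a would-be positive boundary maximum again fails Hopf versus Robin.

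The main obstacle is the preservation of the nonlinear quantity $\beta-\partial_{yy}u^s/\partial_y u^s\ge\delta_s>0$. My plan is to set
\begin{equation*}
\eta:=\frac{\partial_{yy}u^s}{\partial_y u^s}-\beta,
\end{equation*}
and verify directly, using $u_t^s=u_{yy}^s$, that $\eta$ satisfies the linear advection-diffusion equation
\begin{equation*}
\eta_t-\eta_{yy}-2\,\frac{\partial_{yy}u^s}{\partial_y u^s}\,\eta_y=0,
\end{equation*}
whose advection coefficient is already under control from the preceding step. Time-differentiating the Robin condition and using $u_t^s=u_{yy}^s$ gives $(\partial_{yyy}u^s-\beta\partial_{yy}u^s)|_{y=0}=0$, from which a short computation yields the nonlinear boundary identity
\begin{equation*}
\eta_y|_{y=0}=-\big(\eta|_{y=0}+\beta\big)\,\eta|_{y=0}.
\end{equation*}
At a first would-be touching time where $\eta|_{y=0}=0$ with $\eta<0$ up to that moment, this identity forces $\eta_y|_{y=0}=0$, contradicting the strict Hopf inequality required at a boundary maximum; the strong maximum principle rules out a first interior touching; and the far-field analysis, using the decay of $\partial_y u^s,\partial_{yy}u^s$ as $y\to+\infty$ that follows from the weighted estimates, yields $\eta\to-\beta$ at infinity. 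Together with the initial strict bound $\eta|_{t=0}\le-\delta_{s,0}<0$ and continuity on $[0,T]\times[0,+\infty)$, a compactness argument then upgrades $\eta<0$ to the uniform bound $\eta\le-\delta_s<0$, which combined with $\partial_y u^s>0$ gives the full monotonicity $(\ref{Sect2_Solution_Monotone})$.
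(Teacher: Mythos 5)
Your qualitative monotonicity arguments via the maximum principle and Hopf's lemma are a genuine and workable alternative to the paper's explicit heat-kernel computations. The paper reduces to $w^1=u_y^s-\beta u^s$, which solves a Dirichlet heat problem, writes the solution formula, and reads off signs directly (of $w^1$, $\partial_y w^1$, and hence $u^s$, $u_y^s$, $\beta-u_{yy}^s/u_y^s$). You replace this by strong-maximum-principle plus Hopf-versus-Robin contradictions, which is cleaner and avoids integral representations. Your auxiliary equation for $\eta=u_{yy}^s/u_y^s-\beta$ and the boundary identity $\eta_y|_{y=0}=-(\eta+\beta)\eta|_{y=0}$ are exactly the paper's semilinear system for $\alpha=u_{yy}^s/u_y^s$ with boundary condition $\alpha_y+\alpha^2=\beta\alpha$, shifted by $\beta$; the computations check out.

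There is, however, a genuine gap in your plan for the estimates $(\ref{Sect2_Solution_Regularity})$. You propose to control $\partial_{yy}u^s/\partial_y u^s$ and its derivatives ``by the quotient rule once $\partial_y u^s$ is bounded below.'' But $\partial_y u^s$ is \emph{not} bounded below uniformly in $y$: since $u^s\to 1$ and the weighted bound $\|u^s\|_{\dot{\mathcal{C}}_\ell^k}\le C(T)$ forces $<y>^\ell\partial_y u^s\in L^2_y$, one has $\partial_y u^s\to 0$ as $y\to+\infty$. The required bound $\|\frac{\partial_{yy}u^s}{\partial_y u^s}\|_{\mathcal{C}_0^{k-1}}<\infty$ is an $L^2_y(L^\infty_t)$-type decay estimate for the ratio, and it does not follow from pointwise quotient-rule manipulations on quantities whose denominator vanishes at infinity. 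The paper handles this precisely by deriving the parabolic system $\partial_t\alpha=\partial_{yy}\alpha+\partial_y(\alpha^2)$ (and its linearization for $\alpha_1=u_{yt}^s/u_y^s$) and performing weighted energy estimates on those equations, using the hypothesis $\alpha^0\in\mathcal{C}_\ell^{k-1}$ as initial data. You write down essentially this equation later for the monotonicity argument, but you do not feed it back into the regularity step, so the quantitative bounds in $(\ref{Sect2_Solution_Regularity})$ are not actually obtained by your plan.

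The same issue undermines your final ``compactness'' step. The uniform lower bound $\beta-u_{yy}^s/u_y^s\ge\delta_s$ on the noncompact domain $[0,T]\times[0,+\infty)$ requires the uniform-in-$t$ far-field behavior $\eta\to-\beta$, which is exactly the decay of $\alpha$ at infinity that the quotient-rule argument fails to supply; it again comes from the weighted estimates on the $\alpha$ system. The paper avoids the issue entirely: applying the parabolic maximum principle to $\alpha$ and observing that $\partial_y\alpha|_{y=0}=\alpha(\beta-\alpha)|_{y=0}>0$ whenever $\alpha|_{y=0}>0$ rules out a positive boundary maximum without any Hopf lemma, yielding the explicit bound $\alpha\le\max\{\max\alpha^0,0\}$, hence $\beta-\alpha\ge\min\{\delta_\beta,\delta_{s,0}\}$ uniformly and independently of $T$. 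That argument is both simpler and quantitatively sharper than the compactness route. In short, the structure of your proof is sound, but to close it you need to replace the quotient-rule step by weighted energy estimates on the $\alpha$ (equivalently $\eta$) equation, which then also delivers the far-field decay you rely on.
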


\begin{proof}
We prove $(\ref{Sect2_Solution_Monotone})$ first.
Let $w^1(t,y)=u_y^s(t,y) - \beta u^s(t,y)$, then $w^1$ satisfies the following equations:
\begin{equation}\label{Sect2_HeatEq_Transform}
\left\{\begin{array}{ll}
w_t^1 = w_{yy}^1,\quad t>0,\ y>0, \\[9pt]
w^1|_{y=0} = 0,\quad \lim\limits_{y\rto +\infty}w^1 = -\beta, \\[11pt]
w^1|_{t=0} = w_0^1(y) := \partial_y u_0^s(y) - \beta u_0^s(y).
\end{array}\right.
\end{equation}

The solution of $(\ref{Sect2_HeatEq_Transform})$ can be written as
\begin{equation}\label{Sect2_HeatEq_Transform_Solution}
\begin{array}{ll}
w^1(t,y) = \frac{1}{2\sqrt{\pi t}}\int\limits_0^{+\infty}
\Big( e^{-\frac{(y-\tilde{y})^2}{4t}} - e^{-\frac{(y+\tilde{y})^2}{4t}} \Big)
w_0^1(\tilde{y}) \,\mathrm{d}\tilde{y} <0,
\end{array}
\end{equation}
where $w_0^1(y)<0$ comes from $w_0^1|_{y=0}=0$ and $\beta -\frac{\partial_{yy} u_0^s}{\partial_y u_0^s} >0$.

Integrate $(u e^{-\beta y})_y = w^1 e^{-\beta y}$ and note that $\lim\limits_{y\rto +\infty} u e^{-\beta y} =0$, then 
the explicit solution of $(\ref{Sect2_HeatEq})$ is written as
\begin{equation}\label{Sect2_HeatEq_Solution}
\begin{array}{ll}
u^s(t,y) = e^{\beta y} \int\limits_{+\infty}^{y} e^{-\beta \hat{y}} w^1(t,\hat{y}) \,\mathrm{d}\hat{y}
\\[12pt]\hspace{1.15cm}

= \frac{e^{\beta y}}{2\sqrt{\pi t}} \int\limits_{+\infty}^{y} e^{-\beta \hat{y}} \int\limits_0^{+\infty}
\Big( e^{-\frac{(\hat{y}-\tilde{y})^2}{4t}} - e^{-\frac{(\hat{y}+\tilde{y})^2}{4t}} \Big)
w_0^1(\tilde{y}) \,\mathrm{d}\tilde{y} \,\mathrm{d}\hat{y}.
\end{array}
\end{equation}

Since $-\beta\leq w_0^1(y)\leq 0$, it is easy to check that the integral in $(\ref{Sect2_HeatEq_Solution})$ converges for any $t\in (0,T]$ after replacing $\frac{\tilde{y}\pm\hat{y}}{\sqrt{t}}$ into a new variable, $\lim\limits_{t\rto 0}u^s(t,y) = u_0^s(y)$ due to $\lim\limits_{t\rto 0}w^1(t,y) = w^1_0(y)$,
then $u^s \in L^{\infty}([0,T]\times\mathbb{R}_{+})$. Thus, the existence and uniqueness of the solutions to $(\ref{Sect2_HeatEq})$ are obtained.

Note that
\begin{equation}\label{Sect2_Heat_Kernel}
\left\{\begin{array}{ll}
e^{-\frac{(y-\tilde{y})^2}{4t}} - e^{-\frac{(y+\tilde{y})^2}{4t}} >0,\quad \forall y>0, \\[6pt]
e^{-\frac{(y-\tilde{y})^2}{4t}} - e^{-\frac{(y+\tilde{y})^2}{4t}} =0,\quad y=0.
\end{array}\right.
\end{equation}
Then it follows from $(\ref{Sect2_HeatEq_Transform_Solution})$ that
\begin{equation}\label{Sect2_W_Monotone}
\left\{\begin{array}{ll}
w^1 = \partial_y u^s -\beta u^s <0,\quad \forall y>0, \\[7pt]
(\partial_y u^s - \beta u^s)|_{y=0}=0.
\end{array}\right.
\end{equation}

Note that $(\ref{Sect2_Heat_Kernel})$ and $w_0(y)<0,\ \forall y>0$, it follows from $(\ref{Sect2_HeatEq_Solution})$ that
\begin{equation*}
\left\{\begin{array}{ll}
u^s|_{y=0} = \frac{1}{2\sqrt{\pi t}} \int\limits_{+\infty}^0 e^{-\beta \hat{y}} \int\limits_0^{+\infty}
\Big( e^{-\frac{(\hat{y}-\tilde{y})^2}{4t}} - e^{-\frac{(\hat{y}+\tilde{y})^2}{4t}} \Big)
w_0^1(\tilde{y}) \,\mathrm{d}\tilde{y} \,\mathrm{d}\hat{y}>0, \\[13pt]

u^s(t,y) = \frac{e^{\beta y}}{2\sqrt{\pi t}} \int\limits_{+\infty}^{y} e^{-\beta \hat{y}} \int\limits_0^{+\infty}
\Big( e^{-\frac{(\hat{y}-\tilde{y})^2}{4t}} - e^{-\frac{(\hat{y}+\tilde{y})^2}{4t}} \Big)
w_0^1(\tilde{y}) \,\mathrm{d}\tilde{y} \,\mathrm{d}\hat{y}>0.
\end{array}\right.
\end{equation*}
then
$\partial_y u^s|_{y=0} = \beta u^s|_{y=0} >0$, \ $\lim\limits_{\beta\rto +\infty}u^s|_{y=0}= 0$.

In order to obtain the monotonicity of $\partial_y w^1$, we transform $(\ref{Sect2_HeatEq_Transform_Solution})$ into
\begin{equation}\label{Sect2_Maximum_2}
\begin{array}{ll}
w^1 = \frac{1}{\sqrt{\pi}} \Big(
\int\limits_{-\frac{y}{2\sqrt{t}}}^{+\infty} e^{-\xi^2} w_0^1(2\sqrt{t}\xi + y) \,\mathrm{d}\xi
- \int\limits_{\frac{y}{2\sqrt{t}}}^{+\infty} e^{-\xi^2} w_0^1(2\sqrt{t}\xi - y) \,\mathrm{d}\xi
\Big).
\end{array}
\end{equation}

\vspace{-0.25cm}
Apply $\partial_y$ to $(\ref{Sect2_Maximum_2})$ and note that $\partial_y w_0^1<0$, we get $\partial_y w^1(t)<0,\ \forall y\in [0,+\infty)$, due to
\begin{equation}\label{Sect2_Maximum_3}
\begin{array}{ll}
\partial_y w^1 = \frac{1}{\sqrt{\pi}} \Big(
\int\limits_{-\frac{y}{2\sqrt{t}}}^{+\infty} e^{-\xi^2} \partial_y w_0^1(2\sqrt{t}\xi + y) \,\mathrm{d}\xi
+ \int\limits_{\frac{y}{2\sqrt{t}}}^{+\infty} e^{-\xi^2} \partial_y w_0^1(2\sqrt{t}\xi - y) \,\mathrm{d}\xi
\Big).
\end{array}
\end{equation}

\vspace{-0.3cm}
Apply $\partial_y$ to $(\ref{Sect2_HeatEq_Solution})$, we get
\begin{equation}\label{Sect2_Maximum_4}
\begin{array}{ll}
\partial_y u^s(t,y) = \frac{\beta e^{\beta y}}{2\sqrt{\pi t}} \int\limits_{+\infty}^{y} e^{-\beta \hat{y}} \int\limits_0^{+\infty}
\Big( e^{-\frac{(\hat{y}-\tilde{y})^2}{4t}} - e^{-\frac{(\hat{y}+\tilde{y})^2}{4t}} \Big)
w_0^1(\tilde{y}) \,\mathrm{d}\tilde{y} \,\mathrm{d}\hat{y}
\end{array}
\end{equation}

\begin{equation*}
\begin{array}{ll}
\hspace{1.9cm}
+\frac{1}{2\sqrt{\pi t}} \int\limits_0^{+\infty}
\Big( e^{-\frac{(y-\tilde{y})^2}{4t}} - e^{-\frac{(y+\tilde{y})^2}{4t}} \Big)
w_0^1(\tilde{y}) \,\mathrm{d}\tilde{y} \hspace{2.3cm}
\\[13pt]\hspace{1.5cm}

= \beta e^{\beta y} \int\limits_{+\infty}^{y} e^{-\beta \hat{y}} w^1(t,\hat{y}) \,\mathrm{d}\hat{y}
+ w^1(t,y).
\end{array}
\end{equation*}

\vspace{-0.2cm}
Denote $H(t,y) := e^{-\beta y} \partial_y u^s(t,y) = \beta \int\limits_{+\infty}^{y} e^{-\beta \hat{y}} w^1(t,\hat{y}) \,\mathrm{d}\hat{y} + e^{-\beta y} w^1(t,y)$.
Since $H(t,+\infty)=0$ and $\partial_y H(t,y) = e^{-\beta y} \partial_y w^1 <0$, we get $H(t,y)>0,\ \forall y\in [0,+\infty)$.
Thus, $\partial_y u^s(t,y)>0,\ \forall y\in [0,+\infty)$.

Apply $\partial_y$ to $w^1 =\partial_y u^s -\beta u^s$, we get
\begin{equation}\label{Sect2_Maximum_5}
\begin{array}{ll}
\partial_{yy} u^s(t,y) - \beta \partial_y u^s(t,y)= \partial_y w^1(t,y)<0.
\end{array}
\end{equation}

Therefore, $u^s>0,\ \partial_y u^s>0,\ \beta-\frac{\partial_{yy} u^s}{\partial_y u^s}>0,\ \forall y\in [0,+\infty)$ is proved.
$(\partial_y u^s - \beta u^s)|_{y=0}=0$ is proved.

\vspace{0.3cm}
Next, we prove the a priori estimates $(\ref{Sect2_Solution_Regularity})$.

When $1\leq j\leq k$, apply $\partial_t^{j}$ to $(\ref{Sect2_HeatEq})_1$, and multiply with
$e^{-2\lambda t}<y>^{2\ell}\partial_t^{j}u$, then integrate in $[0,t]\times\mathbb{R}_{+}$, we get
\begin{equation}\label{Sect2_Energy_Estimate_1}
\begin{array}{ll}
\int\limits_0^{\infty} | e^{-\lambda t}<y>^{\ell}\partial_t^{j} u^s|^2 \,\mathrm{d}y
+ 2\lambda \int\limits_0^t \int\limits_0^{\infty} |
e^{-\lambda \tilde{t}}<y>^{\ell}\partial_t^{j} u^s|^2 \,\mathrm{d}y \,\mathrm{d}\tilde{t} \\[10pt]\quad

+ 2\int\limits_0^t \int\limits_0^{\infty} | e^{-\lambda \tilde{t}}<y>^{\ell}\partial_t^{j}\partial_{y} u^s|^2 \,\mathrm{d}y \,\mathrm{d}\tilde{t}
\\[11pt]
= \int\limits_0^{\infty} |<y>^{\ell}\partial_t^{j} u_0^s|^2 \,\mathrm{d}y
- 4\ell \int\limits_0^t \int\limits_0^{\infty}  e^{-2\lambda \tilde{t}} y<y>^{2\ell-2}\partial_t^{j} u^s
\partial_t^{j}\partial_{y} u^s\,\mathrm{d}y \,\mathrm{d}\tilde{t} \\[11pt]\quad
- 2\int\limits_0^t e^{-2\lambda \tilde{t}}\partial_t^{j} u^s|_{y=0}
\partial_t^{j}\partial_{y} u^s|_{y=0} \,\mathrm{d}\tilde{t} \\[10pt]

= \int\limits_0^{\infty} |<y>^{\ell}\partial_t^{j} u_0^s|^2 \,\mathrm{d}y
- 4\ell \int\limits_0^t \int\limits_0^{\infty}  e^{-2\lambda \tilde{t}} y<y>^{2\ell-2}\partial_t^{j} u^s
\partial_t^{j}\partial_{y} u^s\,\mathrm{d}y \,\mathrm{d}\tilde{t} \hspace{0.9cm}
\\[11pt]\quad
- 2\beta\int\limits_0^t \big|e^{-\lambda t}\partial_t^{j} u^s|_{y=0} \big|^2 \,\mathrm{d}\tilde{t}.
\end{array}
\end{equation}

Thus, take $\lambda>0$ to be large enough and note that $T<+\infty$, we have
\begin{equation}\label{Sect2_Energy_Estimate_2}
\begin{array}{ll}
\int\limits_0^{\infty} |<y>^{\ell}\partial_t^{j} u^s|^2 \,\mathrm{d}y
+ \lambda \int\limits_0^t \int\limits_0^{\infty} |<y>^{\ell}\partial_t^{j} u^s|^2 \,\mathrm{d}y \,\mathrm{d}\tilde{t} \\[10pt]\quad

+ \int\limits_0^t \int\limits_0^{\infty} |<y>^{\ell}\partial_t^{j}\partial_{y} u^s|^2 \,\mathrm{d}y \,\mathrm{d}\tilde{t}
+ 2\beta\int\limits_0^t \big|\partial_t^{j} u^s|_{y=0} \big|^2 \,\mathrm{d}\tilde{t}
\\[11pt]

\leq C(\lambda T)\int\limits_0^{\infty} |<y>^{\ell}\partial_t^{j} u_0^s|^2 \,\mathrm{d}y
\leq C(\lambda T)\|u^s_0\|_{\dot{\mathcal{C}}_{\ell}^{j}} \leq C(T),\quad 1\leq j\leq k.
\end{array}
\end{equation}

Since $\partial_t^{j}\partial_y u^s|_{y=0} = \int\limits_{+\infty}^0 \partial_t^{j}\partial_{yy} u^s(t,\tilde{y})\,\mathrm{d}\tilde{y}
= \int\limits_{+\infty}^0 \partial_t^{j+1} u^s(t,\tilde{y})\,\mathrm{d}\tilde{y}$, then
\begin{equation}\label{Sect2_Energy_Estimate_4}
\begin{array}{ll}
\big|\partial_t^{j}\partial_y u^s|_{y=0}\big|_{\infty}
\lem \sup\limits_{0\leq t\leq T}\int\limits_0^{+\infty} |\partial_t^{j+1} u^s|\,\mathrm{d}\tilde{y}
\lem C_{\ell}\big(\sup\limits_{0\leq t\leq T}\int\limits_0^{+\infty}<\tilde{y}>^{2\ell} |\partial_t^{j+1} u^s|^2\,\mathrm{d}\tilde{y}\big)^{\frac{1}{2}} \\[13pt]\hspace{2.35cm}
\lem \big(\int\limits_0^{\infty} |<y>^{\ell}\partial_t^{j+1} u_0^s|^2 \,\mathrm{d}y \big)^{\frac{1}{2}} \leq C(T), \quad 0\leq j\leq k-1, \\[15pt]

\int\limits_0^t\big|\partial_t^{j}\partial_y u^s|_{y=0}\big|^2 \,\mathrm{d}\tilde{t}
\lem \int\limits_0^t \Big|\int\limits_{+\infty}^0 \partial_t^{j+1} u^s\,\mathrm{d}\tilde{y}\Big|^2 \mathrm{d}\tilde{t}
\lem C_{\ell}\int\limits_0^t\int\limits_0^{+\infty} |<\tilde{y}>^{\ell}\partial_t^{j+1} u^s|^2\,\mathrm{d}\tilde{y}\,\mathrm{d}\tilde{t} \\[13pt]\hspace{2.83cm}
\lem C_{\ell}\int\limits_0^{\infty} |<\tilde{y}>^{\ell}\partial_t^{j+1} u_0^s|^2 \,\mathrm{d}y \leq C(T), \quad  0\leq j\leq k-1.
\end{array}
\end{equation}

For any $t\in [0,T]$, we have the $L^2$ estimate for $(1- u^s)_t = (1- u^s)_{yy}$\ :
\begin{equation}\label{Sect2_L2_Estimate_0}
\begin{array}{ll}
\frac{1}{2}\partial_t \|1- u^s\|_{L^2(\mathbb{R}_{+})}^2 = \int\limits_0^{+\infty} (1-u^s)_{yy}(1-u^s) \,\mathrm{d}y \\[10pt]\hspace{2.85cm}
= \partial_y u^s|_{y=0} (1- u^s|_{y=0}) - \int\limits_0^{+\infty} |u_y^s|^2 \,\mathrm{d}y \\[10pt]\hspace{2.85cm}
\leq  -\beta \big|u^s|_{y=0}\big|^2 - \int\limits_0^{+\infty} |u_y^s|^2 \,\mathrm{d}y + \partial_y u^s|_{y=0} \, ,
\end{array}
\end{equation}
then by $(\ref{Sect2_Energy_Estimate_4})$, we get
\begin{equation}\label{Sect2_L2_Estimate_1}
\begin{array}{ll}
\|u^s -1\|_{L^2(\mathbb{R_{+}})}^2 + 2\int\limits_0^t \int\limits_0^{+\infty} |u_y^s|^2 \,\mathrm{d}y\mathrm{d}t
+ 2\beta \int\limits_0^t \big|u^s|_{y=0}\big|^2 \,\mathrm{d}t \\[10pt]
\leq \|u_0^s -1\|_{L^2(\mathbb{R_{+}})}^2 + 2\int\limits_0^t\big|\partial_y u^s|_{y=0}\big|_{\infty} \,\mathrm{d}t \leq C(T).
\end{array}
\end{equation}

Denote $w^2 =\partial_y u^s$, then $w^2$ satisfies the following equation:
\begin{equation}\label{Sect2_W2_1}
\left\{\begin{array}{ll}
\partial_t w^2 =\partial_{yy} w^2,\quad y>0,\ t>0, \\[8pt]
(\partial_t w^2 -\beta\partial_y w^2)|_{y=0} =0,\quad \lim\limits_{y\rto +\infty}w^2 =0, \\[8pt]
w^2|_{t=0} = w^2_0(y) := \partial_y u_0^s(y).
\end{array}\right.
\end{equation}

When $0\leq j\leq k-1$, apply $\partial_t^{j}$ to $(\ref{Sect2_W2_1})_1$, and multiply with
$e^{-2\lambda t}<y>^{2\ell}\partial_t^{j}w^2$, then integrate in $[0,t]\times\mathbb{R}_{+}$, we get
\begin{equation}\label{Sect2_W2_2}
\begin{array}{ll}
\int\limits_0^{\infty} | e^{-\lambda t}<y>^{\ell}\partial_t^{j} w^2|^2 \,\mathrm{d}y
+ 2\lambda \int\limits_0^t \int\limits_0^{\infty} |
e^{-\lambda \tilde{t}}<y>^{\ell}\partial_t^{j} w^2|^2 \,\mathrm{d}y \,\mathrm{d}\tilde{t} \\[10pt]\quad

+ 2\int\limits_0^t \int\limits_0^{\infty} | e^{-\lambda \tilde{t}}<y>^{\ell}\partial_t^{j}\partial_{y} w^2|^2 \,\mathrm{d}y \,\mathrm{d}\tilde{t}
\\[11pt]
= \int\limits_0^{\infty} |<y>^{\ell}\partial_t^{j} w^2_0|^2 \,\mathrm{d}y
- 4\ell \int\limits_0^t \int\limits_0^{\infty}  e^{-2\lambda \tilde{t}} y<y>^{2\ell-2}\partial_t^{j} w^2
\partial_t^{j}\partial_{y} w^2\,\mathrm{d}y \,\mathrm{d}\tilde{t} \\[11pt]\quad
- 2\int\limits_0^t e^{-2\lambda \tilde{t}}\partial_t^{j} w^2|_{y=0}
\partial_t^{j}\partial_{y} w^2|_{y=0} \,\mathrm{d}\tilde{t}
\end{array}
\end{equation}

\begin{equation*}
\begin{array}{ll}
= \int\limits_0^{\infty} |<y>^{\ell}\partial_t^{j} w^2_0|^2 \,\mathrm{d}y
- 4\ell \int\limits_0^t \int\limits_0^{\infty}  e^{-2\lambda \tilde{t}} y<y>^{2\ell-2}\partial_t^{j} w^2
\partial_t^{j}\partial_{y} w^2\,\mathrm{d}y \,\mathrm{d}\tilde{t} \\[11pt]\quad
- \frac{2\lambda}{\beta}\int\limits_0^t \big|e^{-\lambda t}\partial_t^{j} w^2|_{y=0} \big|^2 \,\mathrm{d}\tilde{t}
-\frac{1}{\beta}e^{-2\lambda t}\big|\partial_t^{j}w^2|_{y=0}\big|^2 + \frac{1}{\beta}\big|\partial_t^{j}w^2_0|_{y=0}\big|^2.
\end{array}
\end{equation*}

Thus, take $\lambda>0$ to be large enough and note that $T<+\infty$, we have
\begin{equation}\label{Sect2_W2_3}
\begin{array}{ll}
\int\limits_0^{\infty} |<y>^{\ell}\partial_t^{j} w^2|^2 \,\mathrm{d}y
+ \frac{2\lambda}{\beta}\int\limits_0^t \big|\partial_t^{j} w^2|_{y=0} \big|^2 \,\mathrm{d}\tilde{t}
+ \frac{1}{\beta}\big|\partial_t^{j}w^2|_{y=0}\big|_{\infty}^2 \\[10pt]\quad

+ \lambda \int\limits_0^t \int\limits_0^{\infty} |<y>^{\ell}\partial_t^{j} w^2|^2 \,\mathrm{d}y \,\mathrm{d}\tilde{t}
+ \int\limits_0^t \int\limits_0^{\infty} |<y>^{\ell}\partial_t^{j}\partial_{y} w^2|^2 \,\mathrm{d}y \,\mathrm{d}\tilde{t}
\\[11pt]

\leq C(\lambda T)\int\limits_0^{\infty} |<y>^{\ell}\partial_t^{j} w^2_0|_{y=0}|^2 \,\mathrm{d}y
 + C(\lambda T)\frac{1}{\beta}\big|\partial_t^{j}w^2_0|_{y=0}\big|_{\infty}^2 \\[11pt]

= C(\lambda T)\int\limits_0^{\infty} |<y>^{\ell}\partial_t^{j} w^2_0|^2 \,\mathrm{d}y
 + C(\lambda T)\big|\partial_t^{j} u_0^s|_{y=0} \big|_{\infty}^2,
\end{array}
\end{equation}

Since $\partial_t^{j} u_0^s|_{y=0} = \int\limits_{+\infty}^0 \partial_t^{j}\partial_{y} u_0^s(t,\tilde{y})\,\mathrm{d}\tilde{y}$, then
for $0\leq j\leq k-1$,
\begin{equation}\label{Sect2_W2_4}
\begin{array}{ll}
\big|\partial_t^{j} u_0^s|_{y=0}\big|_{\infty}
\lem \int\limits_0^{+\infty} |\partial_t^{j}\partial_{y} u_0^s|\,\mathrm{d}\tilde{y}
\lem C_{\ell}\big(\int\limits_0^{+\infty}<\tilde{y}>^{2\ell} |\partial_t^{j}\partial_{y} u_0^s|^2\,\mathrm{d}\tilde{y}\big)^{\frac{1}{2}}.
\end{array}
\end{equation}

Then for $0\leq j\leq k-1$,
\begin{equation}\label{Sect2_W2_5}
\begin{array}{ll}
\int\limits_0^{\infty} |<y>^{\ell}\partial_t^{j} \partial_y u^s|^2 \,\mathrm{d}y
+ \frac{2\lambda}{\beta}\int\limits_0^t \big|\partial_t^{j} \partial_y u^s|_{y=0} \big|^2 \,\mathrm{d}\tilde{t}
+ \frac{1}{\beta}\big|\partial_t^{j}\partial_y u^s|_{y=0}\big|_{\infty}^2 \\[10pt]

\leq C(\lambda T)\int\limits_0^{\infty} |<y>^{\ell}\partial_t^{j} \partial_y u^s_0|^2 \,\mathrm{d}y
\leq C(\lambda T)\|u^s_0\|_{\dot{\mathcal{C}}_{\ell}^{j+1}} \leq C(T).
\end{array}
\end{equation}

Since $\partial_t^{j} u^s|_{y=0} = \int\limits_{+\infty}^0 \partial_t^{j}\partial_{y} u^s(t,\tilde{y})\,\mathrm{d}\tilde{y}$, then
\begin{equation}\label{Sect2_W2_6}
\begin{array}{ll}
\big|\partial_t^{j} u^s|_{y=0}\big|_{\infty}
\lem \sup\limits_{0\leq t\leq T}\int\limits_0^{+\infty} |\partial_t^{j}\partial_{y} u^s|\,\mathrm{d}\tilde{y}
\lem C_{\ell}\big(\sup\limits_{0\leq t\leq T}\int\limits_0^{+\infty}<\tilde{y}>^{2\ell} |\partial_t^{j}\partial_{y} u^s|^2\,\mathrm{d}\tilde{y}\big)^{\frac{1}{2}} \\[12pt]\hspace{1.95cm}
\lem C(\lambda T)\|u^s_0\|_{\dot{\mathcal{C}}_{\ell}^{j+1}} \leq C(T), \quad 0\leq j\leq k-1.
\end{array}
\end{equation}

Thus, $\|u^s-1\|_{L^2}^2 +\|u^s\|_{\dot{\mathcal{C}}_{0}^k}^2
+ \beta\sum\limits_{j=0}^k\int\limits_0^t \big|\partial_t^{j} u^s|_{y=0} \big|^2 \,\mathrm{d}\tilde{t}
+ \sum\limits_{j=0}^{k-1}\int\limits_0^t\big|\partial_t^{j}\partial_y u^s|_{y=0}\big|^2 \,\mathrm{d}\tilde{t} \\[2pt]
+ \sum\limits_{j=0}^{k-1}\big|\partial_t^{j} u^s|_{y=0}\big|_{\infty}^2
+ \sum\limits_{j=0}^{k-1}\big|\partial_t^{j}\partial_y u^s|_{y=0}\big|_{\infty}^2
\leq \|u_0^s-1\|_{L^2}^2 + C(T)\|u_0^s\|_{\dot{\mathcal{C}}_{\ell}^k}^2 \leq C(T)$.

\vspace{0.2cm}
Define the following variables:
\begin{equation}\label{Sect2_Define_Variables}
\begin{array}{ll}
\alpha(t,y) := \frac{u^s_{yy}(t,y)}{u^s_y(t,y)}, \qquad
\alpha_1(t,y) := \frac{u^s_{yt}(t,y)}{u^s_y(t,y)}, \qquad
\alpha_2(t,y) := \frac{u^s_{yyt}(t,y)}{u^s_y(t,y)}.
\end{array}
\end{equation}

Then $\alpha(t,y)$ satisfies the following semilinear system:
\begin{equation}\label{Sect2_Alpha_System_1}
\left\{\begin{array}{ll}
\partial_t\alpha = \partial_{yy}\alpha + \partial_y(\alpha^2), \quad y>0,\ t>0,\\[8pt]
\partial_y\alpha + \alpha^2 =\beta\alpha,\quad y=0, \\[8pt]
\alpha|_{t=0} = \alpha^0(y) := \frac{\partial_{yy}u_0^s(y)}{\partial_y u_0^s(y)}.
\end{array}\right.
\end{equation}

$\alpha_1(t,y)$ satisfies the following linear system:
\begin{equation}\label{Sect2_Alpha_System_2}
\left\{\begin{array}{ll}
\partial_t\alpha_1 = \partial_{yy}\alpha_1 + 2\alpha\partial_y \alpha_1, \quad y>0,\ t>0,\\[8pt]
\partial_y \alpha_1 = \frac{1}{\beta}\partial_t\alpha_1,\quad y=0, \\[8pt]
\alpha_1|_{t=0} = \alpha_1^0(y) := \frac{\partial_{yyy}u_0^s(y)}{\partial_y u_0^s(y)}.
\end{array}\right.
\end{equation}

For the systems $(\ref{Sect2_Alpha_System_1})$ and $(\ref{Sect2_Alpha_System_2})$,
it is easy to get the following estimates when $\lambda>0$ is large enough:
\begin{equation}\label{Sect2_Alpha_System_Estimates_1}
\begin{array}{ll}
\sum\limits_{k_1=0}^{k-1}\int\limits_0^{\infty} <y>^{2\ell}|e^{-\lambda t}\partial_t^{k_1} \alpha|^2\,\mathrm{d}y
+ \sum\limits_{k_1=0}^{k-1}\int\limits_0^T \int\limits_0^{\infty}
 <y>^{2\ell}|e^{-\lambda t}\partial_t^{k_1} \partial_y\alpha|^2\,\mathrm{d}y\,\mathrm{d}t \\[11pt]\quad

+ \lambda\sum\limits_{k_1=0}^{k-1}\int\limits_0^T \int\limits_0^{\infty}
 <y>^{2\ell}|e^{-\lambda t}\partial_t^{k_1} \alpha|^2\,\mathrm{d}y\,\mathrm{d}t
+ 2\beta\sum\limits_{k_1=0}^{k-1}\int\limits_0^T \big|e^{-\lambda t}\partial_t^{k_1} \alpha|_{y=0}\big|^2 \,\mathrm{d}t \\[11pt]
\lem \sum\limits_{k_1=0}^{k-1}\int\limits_0^{\infty} <y>^{2\ell}|\partial_t^{k_1} \alpha^0|^2\,\mathrm{d}y
\lem \|\alpha^0 \|_{\mathcal{C}_{\ell}^{k-1}}^2 \leq C(T), \\[23pt]

\sum\limits_{k_1=0}^{k-2}\int\limits_0^{\infty} <y>^{2\ell}|e^{-\lambda t}\partial_t^{k_1} \alpha_1|^2\,\mathrm{d}y
+ \frac{1}{\beta}\sum\limits_{k_1=0}^{k-2}\big|e^{-\lambda t}\partial_t^{k_1} \alpha_1|_{y=0}\big|^2\\[11pt]
\lem \sum\limits_{k_1=0}^{k-2}\int\limits_0^{\infty} <y>^{2\ell}|\partial_t^{k_1} \alpha_1^0|^2\,\mathrm{d}y
+ \frac{1}{\beta}\sum\limits_{k_1=0}^{k-2}\big|\partial_t^{k_1} \alpha_1^0|_{y=0}\big|^2 \\[11pt]\quad
+ \sum\limits_{k_1=0}^{k-2}\int\limits_0^T \int\limits_0^{\infty}
 <y>^{2\ell}|e^{-\lambda t}\partial_t^{k_1} \alpha|^2\,\mathrm{d}y\,\mathrm{d}t 
\\[11pt]

\lem \|\alpha_1^0 \|_{\mathcal{C}_{\ell}^{k-2}}^2
+ \sum\limits_{k_1=0}^{k-2}\big|\partial_t^{k_1} \alpha^0|_{y=0}\big|^2 +C(T) \hspace{1.3cm}

\\[10pt]
\lem \|\alpha^0 \|_{\mathcal{C}_{\ell}^{k-1}}^2
+ C_{\ell}\sum\limits_{k_1=0}^{k-2}\int\limits_0^{\infty} <y>^{2\ell} |\partial_t^{k_1}\partial_y \alpha^0|^2\,\mathrm{d}y +C(T) \leq C(T),
\end{array}
\end{equation}
where $\partial_t\alpha^0 = \partial_{yy}\alpha^0 + 2\alpha^0\partial_y\alpha^0$, $\partial_t\alpha_1^0 = \partial_{yy}\alpha_1^0 + 2\alpha^0\partial_y\alpha_1^0$ and $\partial_t^{k_1}\alpha^0,\ \partial_t^{k_1}\alpha_1^0$ are defined by the induction method.
$\alpha$ and $\alpha_1$ have the relationship:
$\partial_t^{k_1}\alpha_1|_{y=0} =\beta\partial_t^{k_1}\alpha|_{y=0},\ 
\alpha_1 = \partial_y\alpha + \alpha^2,\ \partial_y \alpha_1 =\partial_{yy}\alpha + 2\alpha\partial_y\alpha$.

Fix $\lambda>0$ and note that $T<+\infty$, then we have
\begin{equation}\label{Sect2_Alpha_System_Estimates_2}
\begin{array}{ll}
\sum\limits_{k_1=0}^{k-1}\int\limits_0^{\infty} <y>^{2\ell}|\partial_t^{k_1} \alpha|^2\,\mathrm{d}y
\leq C(T), \\[13pt]

\sum\limits_{k_1=0}^{k-2}\int\limits_0^{\infty} <y>^{2\ell}|\partial_t^{k_1} \alpha_1|^2\,\mathrm{d}y
\leq C(T).
\end{array}
\end{equation}

Based on $(\ref{Sect2_Alpha_System_Estimates_2})$ and $\partial_y\alpha = \alpha_1 - \alpha^2$, we get
\begin{equation}\label{Sect2_Alpha_System_Estimates_3}
\begin{array}{ll}
\sum\limits_{k_1=0}^{k-2}\int\limits_0^{\infty} <y>^{2\ell}|\partial_t^{k_1}\partial_y \alpha|^2\,\mathrm{d}y
\leq C(T).
\end{array}
\end{equation}

By using the induction method and $\partial_{yy}\alpha = \alpha_t - 2\alpha\partial_y \alpha$, we get
\begin{equation}\label{Sect2_Alpha_System_Estimates_4}
\begin{array}{ll}
\|\alpha\|_{\mathcal{C}_0^{k-1}} =
\|\frac{u^s_{yy}}{u^s_y}\|_{\mathcal{C}_0^{k-1}} \leq C_{\ell}\|\frac{u^s_{yy}}{u^s_y}\|_{\mathcal{C}_{\ell}^{k-1}}
= C_{\ell}\|\alpha\|_{\mathcal{C}_{\ell}^{k-1}} \leq C(T).
\end{array}
\end{equation}

Since $\alpha_1 = \partial_y\alpha + \alpha^2$, it is easy to have
\begin{equation}\label{Sect2_Alpha_System_Estimates_5}
\begin{array}{ll}
\|\alpha_1\|_{\mathcal{C}_0^{k-2}} =
\|\frac{u^s_{yt}}{u^s_y}\|_{\mathcal{C}_0^{k-2}} \leq C_{\ell}\|\frac{u^s_{yt}}{u^s_y}\|_{\mathcal{C}_{\ell}^{k-2}}
\leq C\|\alpha\|_{\mathcal{C}_{\ell}^{k-1}}\leq C(T).
\end{array}
\end{equation}

Since $\alpha_2 = \partial_y\alpha_1 + \alpha \alpha_1 = \partial_{yy}\alpha + 3\alpha\partial_y\alpha +\alpha^3$, it is easy to have
\begin{equation}\label{Sect2_Alpha_System_Estimates_6}
\begin{array}{ll}
\|\alpha_2\|_{\mathcal{C}_0^{k-2}} =
\|\frac{u^s_{yyt}}{u^s_y}\|_{\mathcal{C}_0^{k-2}} \leq C_{\ell}\|\frac{u^s_{yyt}}{u^s_y}\|_{\mathcal{C}_{\ell}^{k-2}}
\leq C\|\alpha\|_{\mathcal{C}_{\ell}^{k-1}}\leq C(T).
\end{array}
\end{equation}

On the boundary $\{y=0\}$,
\begin{equation}\label{Sect2_Alpha_System_Estimates_8}
\begin{array}{ll}
\sum\limits_{m=0}^{k-2}\big|\partial_t^m\alpha|_{y=0}\big|_{\infty}\lem \|\partial_y\alpha\|_{\mathcal{C}_{\ell}^{k-2}}
\lem \|\alpha\|_{\mathcal{C}_{\ell}^{k-1}} \leq C(T), \\[13pt]

\sum\limits_{m=0}^{k-2}\big|\partial_t^m\alpha_1|_{y=0}\big|_{\infty}\lem \|\partial_y\alpha_1\|_{\mathcal{C}_{\ell}^{k-2}}
\lem \|\alpha\|_{\mathcal{C}_{\ell}^{k-1}} \leq C(T), \\[13pt]

\sum\limits_{m=0}^{k-3}\big|\partial_t^m\alpha_2|_{y=0}\big|_{\infty}\lem \|\partial_y\alpha_2\|_{\mathcal{C}_{\ell}^{k-3}}
\lem \|\alpha\|_{\mathcal{C}_{\ell}^{k-1}} \leq C(T).
\end{array}
\end{equation}

\vspace{0.2cm}
Finally, we prove the last part of Theorem $\ref{Sect2_Theorem}$.
If $\partial_{yy} u_0^s \leq 0$, we have $\partial_{yy} u^s \leq 0$ because $w^3=\partial_{yy} u^s$ satisfies the heat equation with Robin boundary condition:
\begin{equation*}
\left\{\begin{array}{ll}
\partial_t w^3 =\partial_{yy} w^3,\quad y>0,\ t>0, \\[9pt]
(\partial_y w^3 -\beta w^3)|_{y=0} =0, \quad \lim\limits_{y\rto +\infty}w^3 =0, \\[9pt]
w^3|_{t=0} = \partial_{yy} u_0^s.
\end{array}\right.
\end{equation*}
The proof of $w^3\leq 0$ is similar to the proof of $w^1\leq 0$.

If $\beta -\frac{\partial_{yy} u_0^s}{\partial_y u_0^s} \geq\delta_{s,0}\geq\beta$, i.e., $\partial_{yy} u_0^s\leq 0$, then $\partial_{yy} u^s\leq 0$,
so $\beta -\frac{\partial_{yy} u^s}{\partial_y u^s} \geq\beta$. Otherwise, $0<\delta_{s,0}<\beta$, $\alpha^0\leq \beta-\delta_{s,0}$.
Apply the maximum principle to the equation $(\ref{Sect2_Alpha_System_1})$, we get
$\max\{\alpha\} \leq \max\{\alpha^0, \alpha|_{y=0}\}$.

We investigate $\partial_y \alpha|_{y=0} = \alpha|_{y=0}\big(\beta-\alpha|_{y=0}\big)$ where $\big(\beta-\alpha|_{y=0}\big)>0$.
If $\alpha|_{y=0,t=t_1}>0$, then $\alpha$ never reach its maximum at $(t,y)=(t_1,0)$. Thus,
$\alpha\leq\max\{\max\{\alpha^0\},\alpha|_{\{y=0,\alpha(t,0)\leq 0\}}\}\leq\max\{\max\{\alpha^0\},0\}$. Equivalently, $\beta -\alpha\geq \min\{\beta,\delta_{s,0}\}\geq \min\{\delta_{\beta},\delta_{s,0}\}>0$. 

Let $\delta_s =\min\{\delta_{\beta},\delta_{s,0}\}$. Thus, the last part of Theorem $\ref{Sect2_Theorem}$ is proved.
\end{proof}

\section{Well-posedness of the Linearized Prandtl-type Equations with Robin Boundary Condition}
In this section, we investigate the well-posedness of the linearized Prandtl-type equations with Robin boundary condition in the weighted Sobolev spaces.
There are two cases: Case I is zero data and nonzero force, Case II is zero force and nonzero data.

\subsection{Case I: Zero Data, Nonzero Force}
Let $(\tilde{u},\tilde{v})$ be a smooth background state satisfying
\begin{equation}\label{Sect3_Background_Condition_1}
\left\{\begin{array}{ll}
\tilde{u}>0,\quad \tilde{u}_y>0,\quad \beta -\frac{\tilde{u}_{yy}}{\tilde{u}_y} \geq\delta>0,\quad \forall y\in [0,+\infty), \\[7pt]
\tilde{u}_x + \tilde{v}_y =0, \\[7pt]
\tilde{v}|_{y=0}=0, \\[7pt]
\lim\limits_{y\rto +\infty} \tilde{u}(t,x,y)=1,
\end{array}\right.
\end{equation}
where $(\tilde{u}_y -\beta\tilde{u})|_{y=0} =0$ is unnecessary to be satisfied and will not be used here.

We consider the following linearized Prandtl-type equations with Robin boundary condition around the background state $(\tilde{u},\tilde{v})$,
with zero data and nonzero force:
\begin{equation}\label{Sect3_Linearized_Prandtl_ZeroData}
\left\{\begin{array}{ll}
u_t + \tilde{u} u_x + \tilde{v} u_y + u \tilde{u}_x + v \tilde{u}_y - u_{yy} = f, \\[9pt]
u_x + v_y = 0, \\[8pt]
(u_y -\beta u)|_{y=0} = 0,\quad v|_{y=0} = 0, \\[9pt]
\lim\limits_{y\rto +\infty} u(t,x,y) =0, \\[9pt]
u|_{t\leq 0} =0.
\end{array}\right.
\end{equation}

In order to eliminate $v$ in $(\ref{Sect3_Linearized_Prandtl_ZeroData})_1$, we define the following variables:
\begin{equation}\label{Sect3_New_Variables}
\begin{array}{ll}
w = (\frac{u}{\tilde{u}_y})_y,\hspace{0.6cm} \eta = \frac{\tilde{u}_{yy}}{\tilde{u}_y},\hspace{0.65cm}
\bar{\eta} = \frac{u_{yy}^s}{\tilde{u}_y},\hspace{0.6cm} \tilde{f} = \frac{f}{\tilde{u}_y}, \\[11pt]

\zeta = \frac{(\partial_t + \tilde{u}\partial_x + \tilde{v}\partial_y -\partial_{yy})\tilde{u}_y}{\tilde{u}_y},\hspace{0.6cm}

\tilde{\zeta}_1 = \frac{u_{yyt}^s}{\tilde{u}_y} - \bar{\eta}\frac{u_{yt}^s}{\tilde{u}_y},
\\[11pt]

\tilde{\zeta}_2 = \frac{(\tilde{u}_{yyt} -u_{yyt}^s)}{\tilde{u}_y}
+ \frac{\tilde{u}\tilde{u}_{yyx}}{\tilde{u}_y}
-\eta\frac{\tilde{u}_{yt}}{\tilde{u}_y}
+ \bar{\eta}\frac{u_{yt}^s}{\tilde{u}_y}
- \eta \frac{\tilde{u} \tilde{u}_{yx}}{\tilde{u}_y}, \\[11pt]

\tilde{\zeta} :=\tilde{\zeta}_1 + \tilde{\zeta}_2
= \frac{\tilde{u}_{yyt} + \tilde{u}\tilde{u}_{yyx} - \eta(\tilde{u}_{yt} + \tilde{u} \tilde{u}_{yx})}{\tilde{u}_y}.
\end{array}
\end{equation}

$w = (\frac{u}{\tilde{u}_y})_y$ and $\lim\limits_{y\rto +\infty} u(t,x,y) = 0$ imply that
\begin{equation}\label{Sect3_Linearized_Prandtl_Transform_5}
\begin{array}{ll}
u= -\tilde{u}_y \int\limits_y^{+\infty} w(t,x,\tilde{y}) \,\mathrm{d}\tilde{y},\\[8pt]
\frac{u}{\tilde{u}_y} = - \int\limits_y^{+\infty} w(t,x,\tilde{y}) \,\mathrm{d}\tilde{y}.
\end{array}
\end{equation}

On the boundary $\{y=0\}$, $w|_{y=0}$ and $u|_{y=0}$ have the relationship:
\begin{equation*}
\begin{array}{ll}
w|_{y=0} = (\frac{u}{\tilde{u}_y})_y|_{y=0} = \frac{u_y \tilde{u}_y -u \tilde{u}_{yy}}{(\tilde{u}_y)^2}|_{y=0}
= \frac{\beta u \tilde{u}_y -u \tilde{u}_{yy}}{(\tilde{u}_y)^2}|_{y=0}
= \frac{u}{\tilde{u}_y}|_{y=0} (\beta -\eta|_{y=0}).
\end{array}
\end{equation*}
Namely,
\begin{equation}\label{Sect3_Linearized_Prandtl_BC_2}
\begin{array}{ll}
u|_{y=0} = \frac{\tilde{u}_y}{\beta-\eta}|_{y=0} \, w|_{y=0}, \\[9pt]
\frac{u}{\tilde{u}_y}|_{y=0} = \frac{w}{\beta-\eta}|_{y=0}.
\end{array}
\end{equation}

\vspace{0.2cm}
The following lemma gives the equations and boundary condition that $w = (\frac{u}{\tilde{u}_y})_y$ satisfies:
\begin{lemma}\label{Sect3_Equation_Lemma}
$\ w$ satisfies the following IBVP:
\begin{equation}\label{Sect3_VorticityEq}
\left\{\begin{array}{ll}
w_t + (\tilde{u}w)_x + (\tilde{v}w)_y - 2(\eta w)_y
- \big(\zeta \int\limits_y^{+\infty} w(t,x,\tilde{y}) \,\mathrm{d}\tilde{y}\big)_y - w_{yy} = \partial_y\tilde{f}, \\[14pt]

\frac{w_t}{\beta- \eta|_{y=0}} + \frac{(\tilde{u}w)_x}{\beta- \eta|_{y=0}}
- \Big(w_y + 2\eta|_{y=0} w + \tilde{f} + \zeta|_{y=0}\int\limits_0^{+\infty} w(t,x,\tilde{y}) \,\mathrm{d}\tilde{y}\Big) \\[10pt]\hspace{0.5cm}

+ \frac{w\tilde{\zeta}|_{y=0}}{(\beta-\eta|_{y=0})^2} =0,\quad y=0, \\[14pt]

w|_{t\leq 0} =0.
\end{array}\right.
\end{equation}
\end{lemma}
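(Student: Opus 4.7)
The plan is to derive the equation and boundary condition for $w = (u/\tilde{u}_y)_y$ in two stages: first produce an intermediate equation for the auxiliary quantity $\phi := u/\tilde{u}_y$, then differentiate it in $y$ for the interior, and specialize it to $\{y=0\}$ for the boundary.

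First, I divide the linearized Prandtl equation $(\ref{Sect3_Linearized_Prandtl_ZeroData})_1$ by $\tilde{u}_y$ and rewrite each term through $\phi$ via
\begin{equation*}
u_t = \phi_t \tilde{u}_y + \phi \tilde{u}_{yt},\quad u_x = \phi_x \tilde{u}_y + \phi \tilde{u}_{yx},\quad u_y = w\tilde{u}_y + \phi\tilde{u}_{yy},\quad u_{yy} = w_y \tilde{u}_y + 2w \tilde{u}_{yy} + \phi \tilde{u}_{yyy}.
\end{equation*}
Collecting the $\phi$-coefficient and matching it with the definition $\zeta = (\tilde{u}_{yt}+\tilde{u}\tilde{u}_{yx}+\tilde{v}\tilde{u}_{yy}-\tilde{u}_{yyy})/\tilde{u}_y$ in $(\ref{Sect3_New_Variables})$, I expect to arrive at the intermediate equation
\begin{equation*}
\phi_t + \tilde{u}\phi_x + \tilde{v}\phi_y + \phi(\zeta + \tilde{u}_x) + v - w_y - 2\eta w = \tilde{f}.
\end{equation*}
This step is purely algebraic and uses no structural input beyond the definitions.

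Next I apply $\partial_y$ to this equation. Expanding and using the incompressibility relation $v_y = -u_x = -\phi_x \tilde{u}_y - \phi \tilde{u}_{yx}$, the three cross-pairs $\tilde{u}_y \phi_x - \phi_x \tilde{u}_y$, $\phi \tilde{u}_{xy} - \phi \tilde{u}_{yx}$, and $-\tilde{u}_x w + \tilde{u}_x w$ are expected to cancel, leaving
\begin{equation*}
w_t + \tilde{u}w_x + \tilde{v}w_y + \zeta w - w_{yy} - 2(\eta w)_y + \phi \zeta_y = \partial_y \tilde{f}.
\end{equation*}
Using $\tilde{u}_x + \tilde{v}_y = 0$ I rewrite $\tilde{u}w_x + \tilde{v}w_y = (\tilde{u}w)_x + (\tilde{v}w)_y$, and substituting $\phi = -\int_y^{+\infty} w(t,x,\tilde{y})\,\mathrm{d}\tilde{y}$ from $(\ref{Sect3_Linearized_Prandtl_Transform_5})$ converts $\phi \zeta_y$ into the integral term $-(\zeta \int_y^{+\infty} w\,\mathrm{d}\tilde{y})_y$, modulo a contribution $+\zeta w$ already present. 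This matches the target interior equation.

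For the boundary identity I evaluate the $\phi$-equation at $y=0$, where $\tilde{v}|_{y=0} = v|_{y=0} = 0$, and use the algebraic identity $\phi|_{y=0} = w|_{y=0}/(\beta-\eta|_{y=0})$ from $(\ref{Sect3_Linearized_Prandtl_BC_2})$. Expanding $\phi_t|_{y=0}$ and $\phi_x|_{y=0}$ through the quotient rule produces the extra factor
\begin{equation*}
\frac{w(\eta_t + \tilde{u}\eta_x)}{(\beta-\eta)^2}\bigg|_{y=0}.
\end{equation*}
The key identity, obtained by direct differentiation of $\eta = \tilde{u}_{yy}/\tilde{u}_y$, is
\begin{equation*}
\eta_t + \tilde{u}\eta_x = \frac{\tilde{u}_{yyt} + \tilde{u}\tilde{u}_{yyx} - \eta(\tilde{u}_{yt}+\tilde{u}\tilde{u}_{yx})}{\tilde{u}_y} = \tilde{\zeta},
\end{equation*}
which is precisely the raison d'\^etre of the definition of $\tilde{\zeta}$ in $(\ref{Sect3_New_Variables})$. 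This lets me replace the extra factor by $w\tilde{\zeta}|_{y=0}/(\beta-\eta|_{y=0})^2$, yielding the stated boundary equation. The integral term $\zeta|_{y=0}\int_0^{+\infty} w\,\mathrm{d}\tilde{y}$ appears because at $y=0$ one may equivalently leave the contribution as $\phi|_{y=0}\zeta|_{y=0}$ or, using $\phi|_{y=0} = -\int_0^{+\infty} w\,\mathrm{d}\tilde{y}$, write it in integral form to match the style of the interior equation. Finally, $w|_{t\le 0}=0$ is immediate from $u|_{t\le 0}=0$ and $w=(u/\tilde{u}_y)_y$. The main obstacle is not conceptual but the careful bookkeeping: checking that the three cross-pairs after $\partial_y$ cancel exactly, and exploiting the identity $\tilde{\zeta}=\eta_t+\tilde{u}\eta_x$ that links the interior coefficient $\zeta$ to its boundary counterpart $\tilde{\zeta}$ in a manner consistent with the Robin condition.
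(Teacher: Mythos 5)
Your derivation of the interior equation for $w$ is essentially the same as the paper's (both pass through the intermediate $\phi$-equation for $\phi=u/\tilde u_y$ and apply $\partial_y$, then use incompressibility and the substitution $\phi=-\int_y^\infty w$). The boundary derivation, however, follows a genuinely different and shorter route than the paper's. The paper goes back to the original linearized Prandtl equation restricted to $\{y=0\}$, together with $u_y=\beta u$, and painstakingly converts $u_t/\tilde u_y$, $u_x/\tilde u_y$, and $u_{yy}/\tilde u_y$ into $w$-quantities (producing and then clearing $(1-\eta/\beta)$ denominators; see $(\ref{Sect3_Linearized_Prandtl_BC_0})$ through $(\ref{Sect3_Linearized_Prandtl_BC_9})$). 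You instead evaluate the already-transformed $\phi$-equation at $y=0$, absorb the Robin condition once through the single algebraic identity $\phi|_{y=0}=w/(\beta-\eta)|_{y=0}$ from $(\ref{Sect3_Linearized_Prandtl_BC_2})$, and apply the quotient rule. The key observation that makes this work cleanly is the identity
\begin{equation*}
\eta_t+\tilde u\,\eta_x
=\frac{\tilde u_{yyt}+\tilde u\,\tilde u_{yyx}-\eta(\tilde u_{yt}+\tilde u\,\tilde u_{yx})}{\tilde u_y}
=\tilde\zeta,
\end{equation*}
which the paper never states explicitly but which explains at once why $\tilde\zeta$ (and not some other combination) enters the boundary term $w\tilde\zeta/(\beta-\eta)^2$. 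Your route and the paper's are logically equivalent — both rely on smoothness up to the boundary so that restricting and differentiating commute — but yours is shorter, avoids the intermediate $(1-\eta/\beta)$ bookkeeping, and exposes the structural meaning of $\tilde\zeta$. One cosmetic remark: in your cancellation of cross-pairs you first cancel $\tilde u_x w+\tilde v_y w=0$ and then re-invoke $\tilde u_x+\tilde v_y=0$ to regroup $\tilde u w_x+\tilde v w_y$ into divergence form; it is cleaner not to cancel that pair at all and instead group $\tilde u w_x+\tilde u_x w=(\tilde u w)_x$ and $\tilde v w_y+\tilde v_y w=(\tilde v w)_y$ directly.
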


\begin{proof}
Firstly, we determine the equations for $w$.

We have the following transformations:
\begin{equation}\label{Sect3_Linearized_Prandtl_Transform_1}
\begin{array}{ll}
u_t + \tilde{u} u_x + \tilde{v} u_y + u \tilde{u}_x + v \tilde{u}_y - u_{yy} = f, \\[11pt]

\frac{u_t}{\tilde{u}_y} + \frac{\tilde{u} u_x}{\tilde{u}_y} + \frac{\tilde{v} u_y}{\tilde{u}_y}
+ \frac{u \tilde{u}_x}{\tilde{u}_y} + v - \frac{u_{yy}}{\tilde{u}_y}
= \tilde{f}, \\[14pt]

(\frac{u}{\tilde{u}_y})_t + \tilde{u} (\frac{u}{\tilde{u}_y})_x + \tilde{v} (\frac{u}{\tilde{u}_y})_y
+ u\frac{\tilde{u}_x}{\tilde{u}_y} + v - \frac{u_{yy}}{\tilde{u}_y} \\[7pt]
= \tilde{f} - \frac{u\tilde{u}_{yt}}{(\tilde{u}_y)^2} - \frac{\tilde{u}u\tilde{u}_{yx}}{(\tilde{u}_y)^2}
- \frac{\tilde{v}u\tilde{u}_{yy}}{(\tilde{u}_y)^2},
\end{array}
\end{equation}

while
\begin{equation}\label{Sect3_Linearized_Prandtl_Transform_2}
\begin{array}{ll}
\frac{u_{yy}}{\tilde{u}_y} = (\frac{u_y}{\tilde{u}_y})_y + \frac{u_y\tilde{u}_{yy}}{(\tilde{u}_y)^2}
= (\frac{u}{\tilde{u}_y})_{yy} + \frac{2 u_y\tilde{u}_{yy}}{(\tilde{u}_y)^2}
+ u\Big(\frac{\tilde{u}_{yy}}{(\tilde{u}_y)^2}\Big)_y ,
\end{array}
\end{equation}

Then we have
\begin{equation}\label{Sect3_Linearized_Prandtl_Transform_3}
\begin{array}{ll}
(\frac{u}{\tilde{u}_y})_t + \tilde{u} (\frac{u}{\tilde{u}_y})_x + \tilde{v} (\frac{u}{\tilde{u}_y})_y
+ u\frac{\tilde{u}_x}{\tilde{u}_y} + v - (\frac{u}{\tilde{u}_y})_{yy} \\[10pt]
= \tilde{f} - \frac{u\tilde{u}_{yt}}{(\tilde{u}_y)^2} - \frac{\tilde{u}u\tilde{u}_{yx}}{(\tilde{u}_y)^2}
- \frac{\tilde{v}u\tilde{u}_{yy}}{(\tilde{u}_y)^2} + \frac{2 u_y\tilde{u}_{yy}}{(\tilde{u}_y)^2}
+ u\Big(\frac{\tilde{u}_{yy}}{(\tilde{u}_y)^2}\Big)_y \\[10pt]
= \tilde{f} - \frac{u}{(\tilde{u}_y)^2}
(\partial_t + \tilde{u}\partial_x + \tilde{v}\partial_y - \partial_{yy})\tilde{u}_y
- \frac{2u \tilde{u}_{yy}^2}{(\tilde{u}_y)^3}
+ \frac{2 u_y\tilde{u}_{yy}}{(\tilde{u}_y)^2}\\[10pt]

= \tilde{f} - \frac{u}{\tilde{u}_y}\zeta + \frac{2 \tilde{u}_{yy}}{\tilde{u}_y}
(\frac{u}{\tilde{u}_y})_y
= \tilde{f} - \frac{u}{\tilde{u}_y}\zeta + 2\eta w.
\end{array}
\end{equation}

Apply $\partial_y$ to $(\ref{Sect3_Linearized_Prandtl_Transform_3})$ and note that
$\tilde{u}(\frac{u}{\tilde{u}_y})_{x} + \tilde{v}(\frac{u}{\tilde{u}_y})_{y}
=(\tilde{u}\frac{u}{\tilde{u}_y})_{x} + (\tilde{v}\frac{u}{\tilde{u}_y})_{y}$, we get
\begin{equation}\label{Sect3_Linearized_Prandtl_Transform_4}
\begin{array}{ll}
(\frac{u}{\tilde{u}_y})_{yt} + (\tilde{u}\frac{u}{\tilde{u}_y})_{xy}
 + (\tilde{v}\frac{u}{\tilde{u}_y})_{yy}
+ (u\frac{\tilde{u}_x}{\tilde{u}_y})_{y}
 -u_x - (\frac{u}{\tilde{u}_y})_{yyy} \\[10pt]
= \partial_y\tilde{f} - (\frac{u}{\tilde{u}_y}\zeta)_y
+ 2 (\eta w)_y, \\[15pt]

w_t + \tilde{u}w_x + \tilde{v}w_y + \tilde{v}_y w + \tilde{u}_y (\frac{u}{\tilde{u}_y})_x
+ (\frac{\tilde{u}_x}{\tilde{u}_y} u)_y -u_x - w_{yy} \\[10pt]\quad
+ (\frac{u}{\tilde{u}_y}\zeta)_y - 2(\eta w)_y = \partial_y\tilde{f}, \\[15pt]

w_t + (\tilde{u}w)_x + (\tilde{v}w)_y - 2(\eta w)_y
+ (\frac{u}{\tilde{u}_y}\zeta)_y - w_{yy} = \partial_y\tilde{f}.
\end{array}
\end{equation}

Plug $(\ref{Sect3_Linearized_Prandtl_Transform_5})_2$ into $(\ref{Sect3_Linearized_Prandtl_Transform_4})_3$,
we have proved $(\ref{Sect3_VorticityEq})_1$.

Next, we determine the boundary condition for $w$, \, to which we only apply $\partial_{t,x}^j$ on the boundary.

Since $v|_{y=0}=\tilde{v}|_{y=0}=0$, the following equations hold on the boundary $\{y=0\}$:
\begin{equation}\label{Sect3_Linearized_Prandtl_BC_0}
\left\{\begin{array}{ll}
u_t + \tilde{u} u_x + u \tilde{u}_x - u_{yy} = f,\\[8pt]
u_y = \beta u.
\end{array}\right.
\end{equation}

Differentiate $(\ref{Sect3_Linearized_Prandtl_BC_0})_2$ tangentially, then we have
\begin{equation}\label{Sect3_Linearized_Prandtl_BC_1}
\begin{array}{ll}
u_{yt} = \beta u_t,\quad u_{yx} = \beta u_x.
\end{array}
\end{equation}

We calculate $\frac{u_{yy}}{\tilde{u}_y} + \frac{f}{\tilde{u}_y}$ on the boundary $\{y=0\}$:
\begin{equation}\label{Sect3_Linearized_Prandtl_BC_3}
\begin{array}{ll}
\frac{u_{yy}}{\tilde{u}_y} + \frac{f}{\tilde{u}_y} = \frac{u_{yy}}{\tilde{u}_y} + \tilde{f}
 \\[12pt]\hspace{1.45cm}
= w_y + \frac{2 u_y\tilde{u}_{yy}}{(\tilde{u}_y)^2}
+ u\Big(\frac{\tilde{u}_{yy}}{(\tilde{u}_y)^2}\Big)_y + \tilde{f} \\[12pt]\hspace{1.45cm}

= w_y + \frac{2\tilde{u}_{yy}}{\tilde{u}_y}\Big(w+ \frac{u \tilde{u}_{yy}}{(\tilde{u}_y)^2}\Big)
+ u\Big(\frac{\tilde{u}_{yy}}{(\tilde{u}_y)^2}\Big)_y + \tilde{f} \\[12pt]\hspace{1.45cm}
= w_y + 2\eta w + u\Big[\frac{2\tilde{u}_{yy}^2}{(\tilde{u}_y)^3}
+ \Big(\frac{\tilde{u}_{yy}}{(\tilde{u}_y)^2}\Big)_y \Big] + \tilde{f} \\[12pt]\hspace{1.45cm}
= w_y + 2\eta w + u \frac{\tilde{u}_{yyy}}{(\tilde{u}_{y})^2} + \tilde{f}.
\end{array}
\end{equation}

We calculate $\frac{u_t}{\tilde{u}_y}$ on the boundary $\{y=0\}$:
\begin{equation}\label{Sect3_Linearized_Prandtl_BC_4}
\begin{array}{ll}
\frac{u_t}{\tilde{u}_y} = \frac{u_{yt}}{\beta \tilde{u}_y}
= \frac{1}{\beta}(\frac{u_y}{\tilde{u}_y})_t + \frac{1}{\beta}\frac{u_y \tilde{u}_{yt}}{(\tilde{u}_y)^2} \\[11pt]\hspace{0.47cm}

= \frac{1}{\beta}\big[ (\frac{u}{\tilde{u}_y})_y + \frac{u\tilde{u}_{yy}}{(\tilde{u}_y)^2} \big]_t +\frac{1}{\beta}\frac{u_y\tilde{u}_{yt}}{(\tilde{u}_y)^2}
\\[11pt]\hspace{0.47cm}

= \frac{1}{\beta}w_t +\frac{1}{\beta}\frac{\tilde{u}_{yy}}{\tilde{u}_y}\frac{u_t}{\tilde{u}_y}
+\frac{u}{\beta}\frac{\tilde{u}_{yyt}}{(\tilde{u}_y)^2} - \frac{u}{\beta}\frac{2\tilde{u}_{yy}\tilde{u}_{yt}}{(\tilde{u}_y)^3}
+ u\frac{\tilde{u}_{yt}}{(\tilde{u}_y)^2}\ ,
\end{array}
\end{equation}
then
\begin{equation}\label{Sect3_Linearized_Prandtl_BC_5}
\begin{array}{ll}
(1-\frac{\eta}{\beta})\frac{u_t}{\tilde{u}_y}
= \frac{1}{\beta}w_t +\frac{u}{\beta}\frac{\tilde{u}_{yyt}}{(\tilde{u}_y)^2} + (1-\frac{2\eta}{\beta})u\frac{\tilde{u}_{yt}}{(\tilde{u}_y)^2}.
\end{array}
\end{equation}

Similarly, we calculate $\frac{u_x}{\tilde{u}_y}$ on the boundary $\{y=0\}$:
\begin{equation}\label{Sect3_Linearized_Prandtl_BC_6}
\begin{array}{ll}
(1-\frac{\eta}{\beta})\frac{u_x}{\tilde{u}_y}
= \frac{1}{\beta}w_x +\frac{u}{\beta}\frac{\tilde{u}_{yyx}}{(\tilde{u}_y)^2} + (1-\frac{2\eta}{\beta})u\frac{\tilde{u}_{yx}}{(\tilde{u}_y)^2}.
\end{array}
\end{equation}

We transform $(\ref{Sect3_Linearized_Prandtl_BC_0})_1$ on the boundary $\{y=0\}$ and then plug $(\ref{Sect3_Linearized_Prandtl_BC_5})$ and $(\ref{Sect3_Linearized_Prandtl_BC_6})$ into the new equation, we get
\begin{equation}\label{Sect3_Linearized_Prandtl_BC_7}
\begin{array}{ll}
\frac{u_t}{\tilde{u}_y} + \frac{\tilde{u} u_x}{\tilde{u}_y} + \frac{u\tilde{u}_x}{\tilde{u}_y}
= \frac{u_{yy}}{\tilde{u}_y} + \frac{f}{\tilde{u}_y}, \\[15pt]

(1-\frac{\eta}{\beta})\frac{u_t}{\tilde{u}_y} + \tilde{u}(1-\frac{\eta}{\beta})\frac{u_x}{\tilde{u}_y}
+ (1-\frac{\eta}{\beta})\frac{u\tilde{u}_x}{\tilde{u}_y}
= (1-\frac{\eta}{\beta})(\frac{u_{yy}}{\tilde{u}_y} + \frac{f}{\tilde{u}_y}), \\[15pt]

\frac{1}{\beta}w_t +\frac{u}{\beta}\frac{\tilde{u}_{yyt}}{(\tilde{u}_y)^2}
+ (1-\frac{2\eta}{\beta})u\frac{\tilde{u}_{yt}}{(\tilde{u}_y)^2}
+ \frac{\tilde{u}}{\beta}w_x +\frac{u\tilde{u}}{\beta}\frac{\tilde{u}_{yyx}}{(\tilde{u}_y)^2}
+ (1-\frac{2\eta}{\beta})u\frac{\tilde{u}\tilde{u}_{yx}}{(\tilde{u}_y)^2} \\[12pt]\quad
+ (1-\frac{\eta}{\beta})\frac{u\tilde{u}_x}{\tilde{u}_y}
= (1-\frac{\eta}{\beta})(\frac{u_{yy}}{\tilde{u}_y} + \frac{f}{\tilde{u}_y}).
\end{array}
\end{equation}

After plugging $(\ref{Sect3_Linearized_Prandtl_BC_3})$ into $(\ref{Sect3_Linearized_Prandtl_BC_7})$, we get
\begin{equation}\label{Sect3_Linearized_Prandtl_BC_8}
\begin{array}{ll}
\frac{1}{\beta}(w_t + \tilde{u}w_x) +\frac{u}{\beta}\frac{\tilde{u}_{yyt}}{(\tilde{u}_y)^2}
+ (1-\frac{2\eta}{\beta})u\frac{\tilde{u}_{yt}}{(\tilde{u}_y)^2}
+ \frac{u\tilde{u}}{\beta}\frac{\tilde{u}_{yyx}}{(\tilde{u}_y)^2}
+ (1-\frac{2\eta}{\beta})u\frac{\tilde{u}\tilde{u}_{yx}}{(\tilde{u}_y)^2} \\[12pt]\quad
+ (1-\frac{\eta}{\beta})\frac{u\tilde{u}_x}{\tilde{u}_y}
= (1-\frac{\eta}{\beta})(w_y + 2\eta w + u \frac{\tilde{u}_{yyy}}{(\tilde{u}_{y})^2} + \tilde{f}),
\end{array}
\end{equation}
then we immediately obtain $(\ref{Sect3_VorticityEq})_2$:
\begin{equation}\label{Sect3_Linearized_Prandtl_BC_9}
\begin{array}{ll}
w_t + \tilde{u}w_x - (\beta-\eta)\Big(w_y + 2\eta w + \tilde{f}+ \zeta|_{y=0}\int\limits_0^{+\infty} w(t,x,\tilde{y}) \,\mathrm{d}\tilde{y} \Big) \\[12pt]
= - u\frac{\tilde{u}_{yyt}}{(\tilde{u}_y)^2}
- (\beta-2\eta)u\frac{\tilde{u}_{yt}}{(\tilde{u}_y)^2}
- u\tilde{u}\frac{\tilde{u}_{yyx}}{(\tilde{u}_y)^2}
- (\beta-2\eta)u\frac{\tilde{u}\tilde{u}_{yx}}{(\tilde{u}_y)^2}
- (\beta-\eta)\frac{u\tilde{u}_x}{\tilde{u}_y} \\[12pt]\quad
+ (\beta-\eta)u \frac{\tilde{u}_{yyy}}{(\tilde{u}_{y})^2}
+ (\beta-\eta)\zeta|_{y=0}\frac{u}{\tilde{u}_y}|_{y=0} \\[13pt]

= - \frac{u}{\tilde{u}_y}\Big[\frac{\tilde{u}_{yyt}}{\tilde{u}_y}
+ (\beta-2\eta)\frac{\tilde{u}_{yt}}{\tilde{u}_y}
+ \tilde{u}\frac{\tilde{u}_{yyx}}{\tilde{u}_y}
+ (\beta-2\eta)\frac{\tilde{u}\tilde{u}_{yx}}{\tilde{u}_y}
+ (\beta-\eta)\tilde{u}_x \\[12pt]\quad
- (\beta-\eta)\frac{\tilde{u}_{yyy}}{\tilde{u}_{y}}
- (\beta-\eta)\frac{\tilde{u}_{yt} + \tilde{u}\tilde{u}_{yx} -\tilde{u}_{yyy}}{\tilde{u}_y}\Big]
 \\[15pt]

= - \frac{u}{\tilde{u}_y}|_{y=0}\Big[\frac{\tilde{u}_{yyt}}{\tilde{u}_y}
-\eta\frac{\tilde{u}_{yt}}{\tilde{u}_y}
+ \tilde{u}\frac{\tilde{u}_{yyx}}{\tilde{u}_y}
-\eta\frac{\tilde{u}\tilde{u}_{yx}}{\tilde{u}_y} + (\beta-\eta)\tilde{u}_x \Big]\Big|_{y=0} \\[15pt]
= -\frac{w}{\beta-\eta}|_{y=0}\, \tilde{\zeta}|_{y=0} -w\tilde{u}_x|_{y=0}.
\end{array}
\end{equation}

Thus, Lemma $\ref{Sect3_Equation_Lemma}$ is proved.
\end{proof}

\begin{remark}\label{Sect3_Remark_Beta}
As $\beta\rto +\infty$, $u|_{y=0}\rto 0$, $\int\limits_0^{+\infty} w(t,x,\tilde{y}) \,\mathrm{d}\tilde{y} = -\frac{u}{\tilde{u}_y}|_{y=0} \rto 0$ and then
$\int\limits_y^{+\infty} w(t,x,\tilde{y}) \,\mathrm{d}\tilde{y} = -\int\limits_0^y w(t,x,\tilde{y}) \,\mathrm{d}\tilde{y}$.
Thus, the system $(\ref{Sect3_VorticityEq})$ covers the Dirichlet boundary case when $\beta = +\infty$.
\end{remark}

\vspace{0.15cm}
In order to obtain the weighted energy estimates about the system $(\ref{Sect3_VorticityEq})$, we introduce the useful inequalities (see $\cite{Alexandre_Yang_2014}$):
\begin{equation*}

\end{equation}

\vspace{0.5cm}
Sum $(\ref{Sect3_Estimate2_8})$ from $0$-th order to $k$-th order,
let $q$ to be small enough such that the coefficient of $\sum\limits_{0\leq m\leq k}\|w\|_{\mathcal{B}_{\lambda,\ell}^{m,1}}^2$ is less than $1$,
let $\lambda>0$  to be large enough such that not only the coefficient of $\sum\limits_{0\leq m\leq k}\|w\|_{\mathcal{B}_{\lambda,\ell}^{m,0}}^2$  but also the coefficient of $\sum\limits_{0\leq m\leq k}\int\limits_0^t\int\limits_{\mathbb{R}}\frac{e^{-2\lambda t}}{\beta-\eta|_{y=0}} (\partial_{t,x}^{m}w|_{y=0})^2 \,\mathrm{d}x\mathrm{d}t$
are less than $\lambda$, then we have
\begin{equation}\label{Sect3_Estimate2_9}
\begin{array}{ll}
\|w\|_{\tilde{\mathcal{B}}_{\lambda,\ell}^{k,0}}^2 + \lambda \|w\|_{\mathcal{B}_{\lambda,\ell}^{k,0}}^2
+\|w\|_{\mathcal{B}_{\lambda,\ell}^{k,1}}^2
+ \sum\limits_{0\leq m\leq k}\int\limits_{\mathbb{R}}\frac{e^{-2\lambda t}}{\beta-\eta} (\partial_{t,x}^{m}w|_{y=0})^2
\,\mathrm{d}x \\[9pt]\quad
+ \lambda \sum\limits_{0\leq m\leq k}\int\limits_0^T\int\limits_{\mathbb{R}}\frac{e^{-2\lambda t}}{\beta-\eta} (\partial_{t,x}^{m}w|_{y=0})^2 \,\mathrm{d}x\mathrm{d}t\\[15pt]

\leq \sum\limits_{0\leq m\leq k}\|\partial_{t,x}^{m}w|_{t=0}\|_{L_{\ell}^2(\mathbb{R}_{+}^2)}^2
+ \sum\limits_{0\leq m\leq k}\int\limits_{\mathbb{R}}\frac{1}{\beta-\eta|_{y=0,t=0}} (\partial_{t,x}^{m}w|_{y=0,t=0})^2
\,\mathrm{d}x \\[13pt]\quad

+ C(q)\|\tilde{f}\|_{\mathcal{B}_{\lambda,\ell}^{k,0}}^2 + C(q)\cdot(\lambda_{k})^2\|w\|_{\mathcal{B}_{0,\ell}^{3,1}}^2
+ C(q)\cdot\frac{(\lambda_{k})^2}{C(\max\{{\delta},\,{\beta-C_{\eta}}\})}\big\|w|_{y=0}\big\|_{A^3}^2.
\end{array}
\end{equation}

Fix $\lambda>0$ and note that $T<+\infty,\ w|_{t\leq 0} =0$, we have
\begin{equation}\label{Sect3_Estimate2_10}
\begin{array}{ll}
\|w\|_{\mathcal{B}_{0,\ell}^{k,1}} + \frac{1}{\sqrt{\beta + C_{\eta}}}\big\|w|_{y=0}\big\|_{A^k}
\leq C(q,\lambda T)\|\tilde{f}\|_{\mathcal{B}_{0,\ell}^{k,0}} \\[11pt]\quad
+ C(q,\lambda T)\cdot \lambda_{k}\|w\|_{\mathcal{B}_{0,\ell}^{3,1}}
+ C(q,\lambda T)\cdot\frac{\lambda_{k}}{C(\max\{\sqrt{\delta},\,\sqrt{\beta-C_{\eta}}\})}\big\|w|_{y=0}\big\|_{A^3}.
\end{array}
\end{equation}

Similar to $L^2$-estimate,
\begin{equation}\label{Sect3_Estimate2_11}
\begin{array}{ll}
\|w\|_{\mathcal{B}_{0,\ell}^{k,1}} + \frac{1}{\sqrt{\beta+C_{\eta}}}\|w|_{y=0}\|_{A^k}\lem \|\tilde{f}\|_{\mathcal{B}_{0,\ell}^{k,0}},\quad 0\leq k\leq 3.
\end{array}
\end{equation}

Thus, when $\lambda_{p}<+\infty$ where $p\geq 3$, for any $3\leq k\leq p$, we have
\begin{equation}\label{Sect3_Estimate2_12}
\begin{array}{ll}
\|w\|_{\mathcal{B}_{0,\ell}^{k,1}} + \frac{1}{\sqrt{\beta + C_{\eta}}}\big\|w|_{y=0}\big\|_{A^k}
\leq C(q,\lambda T)\|\tilde{f}\|_{\mathcal{B}_{0,\ell}^{k,0}} \\[9pt]\quad
+ C(q,\lambda T)\cdot\lambda_{k} \Big(\|w\|_{\mathcal{B}_{0,\ell}^{3,1}}
+ \frac{\sqrt{\beta+C_{\eta}}}{C(\max\{\sqrt{\delta},\,\sqrt{\beta-C_{\eta}}\})}\,\frac{1}{\sqrt{\beta+C_{\eta}}}\big\|w|_{y=0}\big\|_{A^3} \Big) \\[12pt]
\leq C(q,\lambda T)\|\tilde{f}\|_{\mathcal{B}_{0,\ell}^{k,0}} + C(q,\lambda T)\cdot\lambda_{k} \|\tilde{f}\|_{\mathcal{B}_{0,\ell}^{3,0}}
\leq C(q,\lambda T)\|\tilde{f}\|_{\mathcal{B}_{0,\ell}^{k,0}} \ ,
\end{array}
\end{equation}
where $\frac{\sqrt{\beta+C_{\eta}}}{C(\max\{\sqrt{\delta},\,\sqrt{\beta-C_{\eta}}\})}<+\infty$ for $0<\delta_{\beta}\leq \beta\leq +\infty$.

\vspace{0.4cm}
3. mixed derivatives estimates:

It follows from $(\ref{Sect3_VorticityEq})$ that
\begin{equation}\label{Sect3_Estimate3_0}
\begin{array}{ll}
w_{yy} = w_t + (\tilde{u} w)_x + (\tilde{v}w)_y - 2(\eta w)_y
- \partial_y\zeta \int\limits_y^{+\infty} w(t,x,\tilde{y}) \,\mathrm{d}\tilde{y} + \zeta w - \partial_y\tilde{f}.
\end{array}
\end{equation}

Applying $\partial_{t,x}^{k_1}$ to $(\ref{Sect3_Estimate3_0})$, it is easy to get the estimate:
\begin{equation}\label{Sect3_Estimate3_1}
\begin{array}{ll}
\|w\|_{\mathcal{B}_{0,\ell}^{k_1,2}}
\lem \|\tilde{f}\|_{\mathcal{B}_{0,\ell}^{k_1,1}}+ (\lambda_{3}+1)(\|w\|_{\mathcal{B}_{0,\ell}^{k_1+1,0}}
+ \|w\|_{\mathcal{B}_{0,\ell}^{k_1,1}})+ \lambda_{k_1+1}\|w\|_{\mathcal{B}_{0,\ell}^{2,1}} \ .
\end{array}
\end{equation}

While by $(\ref{Sect3_Estimate2_10})$,  $(\ref{Sect3_Estimate2_11})$ and $(\ref{Sect3_Estimate2_12})$, we have
\begin{equation}\label{Sect3_Estimate3_2}
\begin{array}{ll}
\|w\|_{\mathcal{B}_{0,\ell}^{k_1+1,0}}
\leq C(q,\lambda T)\|\tilde{f}\|_{\mathcal{B}_{0,\ell}^{k_1+1,0}} + C(q,\lambda T)\cdot\lambda_{k_1+1}\|\tilde{f}\|_{\mathcal{B}_{0,\ell}^{3,0}} \ , \\[10pt]
\|w\|_{\mathcal{B}_{0,\ell}^{k_1,1}}
\leq C(q,\lambda T)\|\tilde{f}\|_{\mathcal{B}_{0,\ell}^{k_1,0}} + C(q,\lambda T)\cdot\lambda_{k_1}\|\tilde{f}\|_{\mathcal{B}_{0,\ell}^{3,0}} \ , \\[10pt]
\|w\|_{\mathcal{B}_{0,\ell}^{2,1}}
\leq C(q,\lambda T)\|\tilde{f}\|_{\mathcal{B}_{0,\ell}^{2,0}} \ .
\end{array}
\end{equation}

Plug $(\ref{Sect3_Estimate3_2})$ into $(\ref{Sect3_Estimate3_1})$, we get
\begin{equation}\label{Sect3_Estimate3_3}
\begin{array}{ll}
\|w\|_{\mathcal{B}_{0,\ell}^{k_1,2}}
\leq \|\tilde{f}\|_{\mathcal{B}_{0,\ell}^{k_1,1}}
+ C(q,\lambda T,\lambda_3)\|\tilde{f}\|_{\mathcal{B}_{0,\ell}^{k_1+1,0}}
+ C(q,\lambda T)\cdot\lambda_{k_1+1}\|\tilde{f}\|_{\mathcal{B}_{0,\ell}^{3,0}} \ .
\end{array}
\end{equation}

When $k_2>2$, apply $\partial_{t,x}^{k_1}\partial_y^{k_2 -2}$ to $(\ref{Sect3_Estimate3_0})$, we get
\begin{equation}\label{Sect3_Estimate3_4}
\begin{array}{ll}
\partial_{t,x}^{k_1}\partial_y^{k_2}w = \partial_t\partial_{t,x}^{k_1}\partial_y^{k_2 -2}w + \partial_x\partial_{t,x}^{k_1}\partial_y^{k_2 -2}(\tilde{u} w)
+ \partial_{t,x}^{k_1}\partial_y^{k_2 -1}(\tilde{v}w) - 2\partial_{t,x}^{k_1}\partial_y^{k_2 -1}(\eta w) \\[8pt]\hspace{1.8cm}
- \partial_{t,x}^{k_1}\partial_y^{k_2-1}\Big( \zeta \int\limits_y^{+\infty} w(t,x,\tilde{y}) \,\mathrm{d}\tilde{y} \Big)
- \partial_{t,x}^{k_1}\partial_y^{k_2 -1}\tilde{f}, \\[10pt]
\end{array}
\end{equation}
then
\begin{equation}\label{Sect3_Estimate3_5}
\begin{array}{ll}
\|w\|_{\mathcal{B}_{0,\ell}^{k_1,k_2}}
\lem \|w\|_{\mathcal{B}_{0,\ell}^{k_1+1,k_2-2}} + \|\tilde{u}\|_{L^{\infty}}\| w\|_{\mathcal{B}_{0,\ell}^{k_1+1,k_2-2}}
+ \|\tilde{u} - u^s\|_{\mathcal{B}_{0,0}^{k_1+1,k_2-2}}\|w\|_{L_{\ell}^{\infty}} \\[12pt]\quad
+ \|\partial_{t,x}^{k_1+1}\partial_y^{k_2 -2} u^s\|_{L_y^2(L_{t}^{\infty})}\|w\|_{L_{y,\ell}^{\infty}(L_{t,x}^2)}
+ \|\tilde{v}\|_{L^{\infty}}\| w\|_{\mathcal{B}_{0,\ell}^{k_1,k_2-1}} \\[12pt]\quad
+ \|\partial_{t,x}^{k_1}\partial_y^{k_2 -1}\tilde{v}\|_{L_y^{\infty}(L_{t,x}^2)}\|w\|_{L_{y,\ell}^2(L_{t,x}^{\infty})}

+ 2\|\partial_{t,x}^{k_1}\partial_y^{k_2-1}\bar{\eta}\|_{L_y^2(L_{t,x}^{\infty})}\|w\|_{L_{y,\ell}^{\infty}(L_{t,x}^2)} \\[12pt]\quad
+ 2\|\eta-\bar{\eta}\|_{\mathcal{B}_{0,\ell}^{k_1,k_2-1}}\|w\|_{L^{\infty}}
+ 2\|\eta\|_{L^{\infty}}\|w\|_{\mathcal{B}_{0,\ell}^{k_1,k_2- 1}}
+ \|\tilde{f}\|_{\mathcal{B}_{0,\ell}^{k_1,k_2-1}} \\[12pt]\quad

+ \|\zeta\|_{\mathcal{B}_{0,\ell}^{k_1,k_2-1}}\|w\|_{L_{y,\ell}^2(L_{t,x}^{\infty})}
+\|\zeta\|_{L^{\infty}}\|w\|_{\mathcal{B}_{0,\ell}^{k_1,k_2-2}} \\[14pt]

\lem \|\tilde{f}\|_{\mathcal{B}_{0,\ell}^{k_1,k_2-1}}
+ (\lambda_{3}+1)(\|w\|_{\mathcal{B}_{0,\ell}^{k_1+1,k_2-2}} + \|w\|_{\mathcal{B}_{0,\ell}^{k_1,k_2-1}}) \\[12pt]\quad
+ (\lambda_{k_1+1,k_2-2} + \lambda_{k_1,k_2-1})\|w\|_{\mathcal{B}_{0,\ell}^{2,1}} \ .
\end{array}
\end{equation}

Since $\lambda_3<+\infty$, $(\ref{Sect3_Estimate3_5})$ implies
\begin{equation}\label{Sect3_Estimate3_6}
\begin{array}{ll}
\|w\|_{\mathcal{B}_{0,\ell}^{k_1,k_2}}

\lem \|\tilde{f}\|_{\mathcal{B}_{0,\ell}^{k_1,k_2-1}}
+ \|w\|_{\mathcal{B}_{0,\ell}^{k_1+1,k_2-2}} + \|w\|_{\mathcal{B}_{0,\ell}^{k_1,k_2-1}} \\[10pt]\hspace{2cm}
+ (\lambda_{k_1+1,k_2-2} + \lambda_{k_1,k_2-1}) \|w\|_{\mathcal{B}_{0,\ell}^{2,1}} \ .
\end{array}
\end{equation}

Apply the induction method to $(\ref{Sect3_Estimate3_6})$, we get the estimates:
\begin{equation}\label{Sect3_Estimate3_8}
\begin{array}{ll}
\|w\|_{\mathcal{A}_{\ell}^{k}}
\lem \|\tilde{f}\|_{\mathcal{A}_{\ell}^{k}}
+ \lambda_{k} \|\tilde{f}\|_{\mathcal{A}_{\ell}^3}.
\end{array}
\end{equation}

Since $\lambda_{p}<+\infty$, for any $0\leq k\leq p$, we have $\|w\|_{\mathcal{A}_{\ell}^{k}}\lem \|\tilde{f}\|_{\mathcal{A}_{\ell}^{k}}$.

By $(\ref{Sect3_Estimate2_10}),(\ref{Sect3_Estimate2_11}),(\ref{Sect3_Estimate2_12}),(\ref{Sect3_Estimate3_8})$, Theorem $\ref{Sect3_Main_Estimate_Thm}$ is proved.
\end{proof}

\begin{remark}\label{Sect3_Coefficience_Remark}
In \cite{Alexandre_Yang_2014}, the linearized Prandtl-type equations with Dirichlet boundary condition has the following estimate
with the different definition of $\lambda_k$:
\begin{equation}\label{Sect3_Coefficience_Remark_Estimate}
\begin{array}{ll}
\|w\|_{\mathcal{A}_{\ell}^k} \leq C_1(\lambda_3) \|\tilde{f}\|_{\mathcal{A}_{\ell}^k} + C_2(\lambda_3)\lambda_k \|\tilde{f}\|_{\mathcal{A}_{\ell}^3}.
\end{array}
\end{equation}
While in this paper, the estimate of $\frac{1}{\sqrt{\beta+C_{\eta}}}\big\|w|_{y=0}\big\|_{A^k}$ is necessary.
\end{remark}

\subsection{Case II: Zero Force, Nonzero Data}
Let $(\tilde{u},\tilde{v})$ be a smooth background state satisfying
\begin{equation}\label{Sect3_Background_Condition_2}
\left\{\begin{array}{ll}
\tilde{u}>0,\quad \tilde{u}_y>0,\quad \beta -\frac{\tilde{u}_{yy}}{\tilde{u}_y} \geq \delta>0,\quad \forall y\in [0,+\infty), \\[7pt]
\tilde{u}_x + \tilde{v}_y =0, \\[7pt]
\tilde{v}|_{y=0}=0, \\[7pt]
\lim\limits_{y\rto +\infty} \tilde{u}(t,x,y)=1,
\end{array}\right.
\end{equation}
where $(\tilde{u}_y -\beta\tilde{u})|_{y=0} =0$ is unnecessary to be satisfied and will not be used here.

We consider the following linearized Prandtl-type equations with Robin boundary condition around the background state $(\tilde{u},\tilde{v})$, with zero force and nonzero data:
\begin{equation}\label{Sect3_Linearized_Prandtl_ZeroForce}
\left\{\begin{array}{ll}
u_t + \tilde{u} u_x + \tilde{v} u_y + u \tilde{u}_x +  v \tilde{u}_y - u_{yy} = 0, \\[8pt]
u_x + v_y =0, \\[8pt]
(\partial_y u - \beta u)|_{y=0} =0,\quad  v|_{y=0} =0,\\[8pt]
\lim\limits_{y\rto +\infty} u(t,x,y) =0, \\[8pt]
u|_{t\leq 0} = u_0.
\end{array}\right.
\end{equation}

Then $w = (\frac{u}{\tilde{u}_y})_y$ satisfies the following IBVP:
\begin{equation}\label{Sect3_VorticityEq_ZeroForce}
\left\{\begin{array}{ll}
w_t + (\tilde{u}w)_x + (\tilde{v}w)_y - 2(\eta w)_y
- \big(\zeta \int\limits_y^{+\infty} w(t,x,\tilde{y}) \,\mathrm{d}\tilde{y} \big)_y - w_{yy} = 0, \\[11pt]

\frac{w_t}{\beta- \eta|_{y=0}} + \frac{(\tilde{u}w)_x}{\beta- \eta|_{y=0}}
- \Big(w_y + 2\eta|_{y=0} w + \tilde{f} + \zeta|_{y=0}\int\limits_0^{+\infty} w(t,x,\tilde{y}) \,\mathrm{d}\tilde{y} \Big) \\[11pt]\hspace{0.5cm}

+ \frac{w\tilde{\zeta}|_{y=0}}{(\beta-\eta|_{y=0})^2} =0,\quad y=0, \\[12pt]

w|_{t\leq 0} =w_0 :=(\frac{u_0}{\partial_y\tilde{u}_0})_y \, .
\end{array}\right.
\end{equation}

When $\lambda_p<+\infty$, we have the following theorem for the equations $(\ref{Sect3_VorticityEq_ZeroForce})$:
\begin{theorem}\label{Sect3_Main_Estimate_ZeroForce_Thm}
Assume $0<\delta_{\beta}\leq\beta<+\infty$, $\ell>\frac{1}{2},\ \beta-\eta|_{y=0}\geq\delta>0,\ |\eta|_{\infty}\leq C_{\eta},\ \lambda_{p}<+\infty$ where $p\geq 3$, then we have the following a priori estimate for $(\ref{Sect3_VorticityEq_ZeroForce})$:
\begin{equation}\label{Sect3_Main_Estimate_Eq_ZeroForce}
\begin{array}{ll}
\|w\|_{\mathcal{A}_{\ell}^{k}} + \frac{1}{\sqrt{\beta + C_{\eta}}}\big\|w|_{y=0}\big\|_{A^{k}} \\[12pt]
\leq C(T,\lambda_p)\Big(\|w_0\|_{\mathcal{A}_{\ell}^{k}(\mathbb{R}_{+}^2,t=0)}
+ \frac{1}{\max\{\sqrt{\beta -C_{\eta}},\sqrt{\delta}\}} \big\|w_0|_{y=0} \big\|_{A^k(\mathbb{R},t=0)} \Big),\quad k\leq p,
\end{array}
\end{equation}
where the time derivatives of the initial data can be represented by the space derivatives of the initial data by solving the equations.
\end{theorem}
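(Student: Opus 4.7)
The proof proceeds along the same weighted-energy lines as Theorem \ref{Sect3_Main_Estimate_Thm}, with two modifications: set $\tilde{f}\equiv 0$ throughout, and retain the initial-data contributions at $t=0$ which previously vanished under the hypothesis $w|_{t\leq 0}=0$. The target inequality $(\ref{Sect3_Main_Estimate_Eq_ZeroForce})$ is driven by $w_0$ in the interior and $w_0|_{y=0}$ on the boundary, replacing the role played by $\tilde{f}$ in Case I.

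First I would establish the $L^2$ base case by multiplying $(\ref{Sect3_VorticityEq_ZeroForce})_1$ by $e^{-2\lambda t}\langle y\rangle^{2\ell}w$, integrating over $\mathbb{R}_{+}^2$, and coupling the resulting boundary integral with the identity obtained by multiplying $(\ref{Sect3_VorticityEq_ZeroForce})_2$ by $e^{-2\lambda t}w|_{y=0}$, exactly as in $(\ref{Sect3_Estimate1_2})$--$(\ref{Sect3_Estimate1_6})$. Keeping the initial data and using $\lambda_3\leq\lambda_p<+\infty$ to handle the commutator terms, after choosing $\lambda$ large enough one obtains
\[
\|w\|_{\mathcal{B}_{0,\ell}^{0,1}} + \tfrac{1}{\sqrt{\beta+C_{\eta}}}\big\|w|_{y=0}\big\|_{A^0} \lem \|w_0\|_{L_{\ell}^2(\mathbb{R}_{+}^2)} + \tfrac{1}{\sqrt{\beta-\eta|_{y=0,t=0}}}\big\|w_0|_{y=0}\big\|_{L^2(\mathbb{R})}.
\]
Since $\beta-\eta|_{y=0,t=0}\geq\max\{\beta-C_{\eta},\delta\}$, the last denominator can be replaced by $\max\{\sqrt{\beta-C_{\eta}},\sqrt{\delta}\}$, matching the right-hand side of $(\ref{Sect3_Main_Estimate_Eq_ZeroForce})$ in the case $k=0$.

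Next, I apply $\partial_{t,x}^{k}$ to both the equation and the boundary condition, pair with the weighted test functions $e^{-2\lambda t}\langle y\rangle^{2\ell}\partial_{t,x}^{k}w$ and $e^{-2\lambda t}\partial_{t,x}^{k}w|_{y=0}$ respectively, and repeat the argument of $(\ref{Sect3_Estimate2_1})$--$(\ref{Sect3_Estimate2_9})$. All commutator estimates $(\ref{Sect3_Estimate2_7_1})$--$(\ref{Sect3_Estimate2_7_7})$ carry over verbatim with $\tilde{f}\equiv 0$, generating lower-order norms of $w$ multiplied by $\lambda_{k}$ plus a $\lambda_{k}$-multiple of $\|w\|_{\mathcal{B}_{0,\ell}^{3,1}}$ and $\|w|_{y=0}\|_{A^3}$. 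Because $\lambda_p<+\infty$, the tangential induction closes for $3\leq k\leq p$ exactly as in $(\ref{Sect3_Estimate2_11})$--$(\ref{Sect3_Estimate2_12})$, yielding control of $\|w\|_{\mathcal{B}_{0,\ell}^{k,1}} + \tfrac{1}{\sqrt{\beta+C_{\eta}}}\|w|_{y=0}\|_{A^k}$ by $\sum_{m\leq k}\|\partial_{t,x}^m w_0\|_{L_{\ell}^2(\mathbb{R}_{+}^2)}$ and $\tfrac{1}{\max\{\sqrt{\beta-C_{\eta}},\sqrt{\delta}\}}\sum_{m\leq k}\|\partial_{t,x}^m w_0|_{y=0}\|_{L^2(\mathbb{R})}$. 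Mixed normal derivatives are then recovered by rewriting $w_{yy}$ from the equation, as in $(\ref{Sect3_Estimate3_0})$--$(\ref{Sect3_Estimate3_6})$, and iterating on $k_2$; once again $\lambda_p<+\infty$ permits absorption of the $\lambda_{k_1+1,k_2-2}+\lambda_{k_1,k_2-1}$ factors into $C(T,\lambda_p)$.

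The main obstacle, compared with Case I, is controlling the \emph{boundary} initial data in the coupling term $\int_{\mathbb{R}}(\beta-\eta|_{y=0,t=0})^{-1}(\partial_{t,x}^m w_0|_{y=0})^2\,\mathrm{d}x$: one must combine the two pointwise bounds $\beta-\eta|_{y=0,t=0}\geq\delta$ and $\beta-\eta|_{y=0,t=0}\geq\beta-C_{\eta}$ to extract the sharper factor $\max\{\sqrt{\beta-C_{\eta}},\sqrt{\delta}\}^{-1}$ that appears in $(\ref{Sect3_Main_Estimate_Eq_ZeroForce})$ — this precise dependence is what makes the stability result of Section 6 valid uniformly as $\beta\rto+\infty$. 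A secondary point is that the time derivatives $\partial_t^j w_0$ in $\|w_0\|_{\mathcal{A}_{\ell}^k}$ and $\partial_t^j w_0|_{y=0}$ in $\|w_0|_{y=0}\|_{A^k}$ must be interpreted via the equation as explicit expressions in spatial derivatives of $w_0$ together with the background data, and one should verify they belong to the required weighted spaces; this is routine given $\lambda_p<+\infty$ and the structural identities in $(\ref{Sect3_New_Variables})$.
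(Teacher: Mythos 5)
Your argument reproduces the paper's proof: both take the weighted energy identities of Theorem \ref{Sect3_Main_Estimate_Thm} (namely $(\ref{Sect3_Estimate1_8})$, $(\ref{Sect3_Estimate2_9})$, $(\ref{Sect3_Estimate3_0})$--$(\ref{Sect3_Estimate3_6})$) with $\tilde{f}\equiv 0$, retain the two initial-data terms, extract the factor $\max\{\sqrt{\beta-C_{\eta}},\sqrt{\delta}\}^{-1}$ from the pointwise bounds on $\beta-\eta|_{y=0,t=0}$, close the tangential induction for $0\le k\le 3$ and then bootstrap to $3\le k\le p$ via $\lambda_p<+\infty$, and finally recover the normal derivatives from the equation. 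This is the same approach and decomposition as the paper's proof.
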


\begin{proof}
1. $L^2$-estimate:

Since $\tilde{f}\equiv 0$, it follows from $(\ref{Sect3_Estimate1_8})$ that
\begin{equation}\label{Sect3_Estimate4_1}
\begin{array}{ll}
\|e^{-\lambda t} w\|_{L_{\ell}^2(\mathbb{R}_{+}^2)}^2
+\lambda \|e^{-\lambda t}<y>^{\ell} w\|_{L^2([0,t]\times\mathbb{R}_{+}^2)}^2
+ \|e^{-\lambda t}<y>^{\ell} w_y\|_{L^2([0,t]\times\mathbb{R}_{+}^2)}^2 \\[10pt]\quad

+ \int\limits_{\mathbb{R}}\frac{e^{-2\lambda t}}{\beta-\eta|_{y=0}} (w|_{y=0})^2
\,\mathrm{d}x
+ \lambda \int\limits_0^t\int\limits_{\mathbb{R}}\frac{e^{-2\lambda t}}{\beta-\eta|_{y=0}} (w|_{y=0})^2 \,\mathrm{d}x\mathrm{d}t\\[14pt]

\leq \|w|_{t=0}\|_{\mathcal{A}_{\ell}^{0}(\mathbb{R},t=0)}^2
+ \int\limits_{\mathbb{R}}\frac{1}{\beta-\eta|_{y=0,t=0}} (w|_{y=0,t=0})^2
\,\mathrm{d}x.
\end{array}
\end{equation}

Fix $\lambda>0$ and note that $T<+\infty,\ w|_{t\leq 0} =0$, for any $t\in [0,T]$, we have
\begin{equation}\label{Sect3_Estimate4_2}
\begin{array}{ll}
\|w\|_{\mathcal{B}_{0,\ell}^{0,1}} + \frac{1}{\sqrt{\beta + C_{\eta}}}\big\|w|_{y=0}\big\|_{A^0} \\[12pt]
\leq C(\lambda T)\|w|_{t=0}\|_{\mathcal{A}_{\ell}^{0}(\mathbb{R}_{+}^2,t=0)}
+ \frac{C(\lambda T)}{\max\{\sqrt{\beta -C_{\eta}},\sqrt{\delta}\}} \big\|w|_{y=0,t=0} \big\|_{A^0(\mathbb{R},t=0).}
\end{array}
\end{equation}

\vspace{0.2cm}
2. tangential derivatives estimates:

Since $\tilde{f}\equiv 0$, it follows from $(\ref{Sect3_Estimate2_9})$ that
\begin{equation}\label{Sect3_Estimate5_1}
\begin{array}{ll}
\|w\|_{\mathcal{B}_{\lambda,\ell}^{k,1}}^2
+ \frac{1}{\beta + C_{\eta}}\big\|e^{-\lambda t} w|_{y=0} \big\|_{A^k}^2 \\[12pt]

\leq \sum\limits_{0\leq m\leq k}\|\partial_{t,x}^{m}w|_{t=0}\|_{L_{\ell}^2(\mathbb{R}_{+}^2)}^2
+ \sum\limits_{0\leq m\leq k}\int\limits_{\mathbb{R}}\frac{1}{\beta-\eta|_{y=0,t=0}} (\partial_{t,x}^{m}w|_{y=0,t=0})^2
\,\mathrm{d}x \\[15pt]\quad

+ C(q)\cdot(\lambda_{k})^2\|w\|_{\mathcal{B}_{\lambda,\ell}^{3,1}}^2
+ C(q)\cdot\frac{(\lambda_{k})^2}{C(\max\{\delta,\beta-C_{\eta}\})}\big\|w|_{y=0}\big\|_{A^3}^2.
\end{array}
\end{equation}

Fix $\lambda>0$ and note that $T<+\infty$, we have
\begin{equation}\label{Sect3_Estimate5_2}
\begin{array}{ll}
\|w\|_{\mathcal{B}_{0,\ell}^{k,1}}
+ \frac{1}{\sqrt{\beta + C_{\eta}}}\big\|w|_{y=0} \big\|_{A^k}

\leq C(\lambda T)\sum\limits_{0\leq m\leq k}\|\partial_{t,x}^{m}w|_{t=0}\|_{L_{\ell}^2(\mathbb{R}_{+}^2)} \\[15pt]\quad
+ \frac{C(\lambda T)}{\max\{\sqrt{\beta -C_{\eta}},\sqrt{\delta}\}}
\big\|w|_{y=0,t=0} \big\|_{A^k(\mathbb{R},t=0)}

+ C(q,\lambda T)\cdot\lambda_{k}\|w\|_{\mathcal{B}_{\lambda,\ell}^{3,1}} \\[14pt]\quad
+ C(q,\lambda T)\lambda_{k} \cdot\frac{\sqrt{\beta + C_{\eta}}}{C(\max\{\sqrt{\delta},\sqrt{\beta-C_{\eta}}\})}
\frac{1}{\sqrt{\beta + C_{\eta}}}\big\|w|_{y=0}\big\|_{A^3},
\end{array}
\end{equation}
where $\frac{\sqrt{\beta + C_{\eta}}}{C(\max\{\sqrt{\delta},\sqrt{\beta-C_{\eta}}\})}<+\infty$ for $0<\delta_{\beta}\leq \beta\leq +\infty$.

Similar to $L^2$-estimate, for any $0\leq k\leq 3$,
\begin{equation}\label{Sect3_Estimate5_3}
\begin{array}{ll}
\|w\|_{\mathcal{B}_{0,\ell}^{k,1}} + \frac{1}{\sqrt{\beta + C_{\eta}}}\big\|w|_{y=0} \big\|_{A^k} \\[11pt]
\lem \|w|_{t=0}\|_{\mathcal{A}_{\ell}^k(\mathbb{R}_{+}^2,t=0)}
+ \frac{1}{\max\{\sqrt{\beta -C_{\eta}},\sqrt{\delta}\}}
\big\|w|_{y=0,t=0} \big\|_{A^k(\mathbb{R},t=0)} .
\end{array}
\end{equation}

Plug $(\ref{Sect3_Estimate5_3})$ into $(\ref{Sect3_Estimate5_2})$ and note that $\lambda_{p}<+\infty$ where $p\geq 3$, we have
\begin{equation}\label{Sect3_Estimate5_4}
\begin{array}{ll}
\|w\|_{\mathcal{B}_{0,\ell}^{k,1}} + \frac{1}{\sqrt{\beta + C_{\eta}}}\big\|w|_{y=0} \big\|_{A^k} \\[11pt]

\lem \|w|_{t=0}\|_{\mathcal{A}_{\ell}^k(\mathbb{R}_{+}^2,t=0)}
+ \frac{1}{\max\{\sqrt{\beta -C_{\eta}},\sqrt{\delta}\}}
\big\|w|_{y=0,t=0} \big\|_{A^k(\mathbb{R},t=0)}\, , \quad 3\leq k\leq p.
\end{array}
\end{equation}

\vspace{0.2cm}
3. mixed derivatives estimates:
When $k=1$, we estimate $(\ref{Sect3_Estimate3_0})$ directly and note that $\partial_y \tilde{f} =0$, we get
\begin{equation}\label{Sect3_Estimate6_1}
\begin{array}{ll}
\|w\|_{\mathcal{B}_{0,\ell}^{0,2}} \lem \|w\|_{\mathcal{B}_{0,\ell}^{1,0}} + \|w\|_{\mathcal{B}_{0,\ell}^{0,1}}\, .
\end{array}
\end{equation}

By $(\ref{Sect3_Estimate5_4})$ and $(\ref{Sect3_Estimate6_1})$, we have
\begin{equation}\label{Sect3_Estimate6_2}
\begin{array}{ll}
\|w\|_{\mathcal{A}_{\ell}^1}
+ \frac{1}{\sqrt{\beta + C_{\eta}}}\big\|w|_{y=0} \big\|_{A^1} \\[12pt]

\lem \big\|w|_{t=0}\big\|_{\mathcal{A}_{\ell}^1(\mathbb{R}_{+}^2,t=0)}
+ \frac{1}{\max\{\sqrt{\beta -C_{\eta}},\sqrt{\delta}\}}
\big\|w|_{y=0,t=0} \big\|_{A^1(\mathbb{R},t=0).}
\end{array}
\end{equation}

When $2\leq k\leq p$, we need to estimate mixed derivatives.

Since $\tilde{f}\equiv 0$, it follows from $(\ref{Sect3_Estimate3_6})$ that
\begin{equation}\label{Sect3_Estimate6_3}
\begin{array}{ll}
\|w\|_{\mathcal{B}_{0,\ell}^{k_1,k_2}}

\lem \|w\|_{\mathcal{B}_{0,\ell}^{k_1+1,k_2-2}} + \|w\|_{\mathcal{B}_{0,\ell}^{k_1,k_2-1}}
+ (\lambda_{k_1+1,k_2-2} + \lambda_{k_1,k_2-1}) \|w\|_{\mathcal{B}_{0,\ell}^{2,1}}\ .
\end{array}
\end{equation}

Apply the induction method to $(\ref{Sect3_Estimate6_3})$, we get the estimates:

\begin{equation}\label{Sect3_Estimate6_4}
\begin{array}{ll}
\|w\|_{\mathcal{B}_{0,\ell}^{k_1,k_2}}
\lem \sum\limits_{0\leq m\leq [\frac{k_2}{2}]}\|w\|_{\mathcal{B}_{0,\ell}^{k_1+m,1}} + \lambda_{p} \|w\|_{\mathcal{B}_{0,\ell}^{2,1}}\ .
\end{array}
\end{equation}

By the inequalities $(\ref{Sect3_Estimate6_4})$ and $(\ref{Sect3_Estimate5_3})$, we get
\begin{equation}\label{Sect3_Estimate6_5}
\begin{array}{ll}
\|w\|_{\mathcal{A}_{\ell}^{k}}
\lem \|w\|_{\mathcal{B}_{0,\ell}^{k,1}}
+ \lambda_{p} \|w\|_{\mathcal{B}_{0,\ell}^{2,1}} \\[15pt]

\lem \big\|w|_{t=0}\big\|_{\mathcal{A}_{\ell}^k(\mathbb{R}_{+}^2,t=0)}
+ \frac{1}{\max\{\sqrt{\beta -C_{\eta}},\sqrt{\delta}\}}
\big\|w|_{y=0,t=0} \big\|_{A^k(\mathbb{R},t=0)}, \quad 2\leq k\leq p.
\end{array}
\end{equation}

By $(\ref{Sect3_Estimate5_3}), (\ref{Sect3_Estimate5_4}), (\ref{Sect3_Estimate6_2}), (\ref{Sect3_Estimate6_5})$, Theorem $\ref{Sect3_Main_Estimate_ZeroForce_Thm}$ is proved.
\end{proof}

\section{Iteration Scheme for the Nonlinear Prandtl Equations with Robin Boundary Condition}
In this section, we establish the Nash-Moser-H$\ddot{o}$rmander iteration scheme and construct the approximate solutions satisfying Robin boundary condition. To start the iteration, we need to construct the zero-th order approximate solution satisfying Robin boundary condition.

\subsection{Establishment of the Nash-Moser-H$\ddot{o}$rmander Iteration Scheme}
In order to mollify some variables and quantities in the Nash-Moser-H$\ddot{o}$rmander iteration scheme, we need to define three smoothing operators $S_{\theta},\ S_{\theta}^u,\ S_{\theta}^v$ and extension operators $\mathrm{E},\ \mathrm{E}^u,\ \mathrm{E}^v$.

Let $\varrho_{\theta}(\tau)=\theta\varrho(\theta\tau),\ \varrho\in C_0^{\infty}(\mathbb{R}),
\ \varrho(-\tau) =\varrho(\tau),\ \varrho\geq 0,\ \|\varrho\|_{L^1}=1,\ Supp \varrho\subset [-1,1]$. We define the following smoothing operator for which there is no restriction near the boundary.
\begin{equation*}
\begin{array}{ll}
(S_{\theta} f)(t,x,y) = \iiint\limits_{\mathbb{R}^3} \varrho_{\theta}(\tau)\varrho_{\theta}(\xi)\varrho_{\theta}(\eta)
E f(t-\tau+\theta^{-1},x-\xi,y-\eta+\theta^{-1})\,\mathrm{d}\tau\mathrm{d}\xi\mathrm{d}\eta,
\end{array}
\end{equation*}
where $E f$ is the extension of $f$ to $\mathbb{R}^3$ by zero. Note that $(S_{\theta} f)(t,x,0)$ may not vanish.

Similar to $\cite{Alexandre_Yang_2014}$, $S_{\theta}$ has the following properties:
\begin{proposition}\label{Sect4_Nonlinear_Op_Proposition_1}
Assume $\|\frac{u_{yy}^s}{u_y^s}\|_{\mathcal{C}_0^{k_0+2}} <+\infty$,
the operator $S_{\theta}$ follows the following rules:
\begin{equation}\label{Sect4_Nonlinear_Op_Properties}
\begin{array}{ll}
\|S_{\theta} f\|_{\mathcal{A}_{\ell}^{s}} \leq C_{\varrho}\theta^{(s-\alpha)_{+}}\|f\|_{\mathcal{A}_{\ell}^{\alpha}},
\hspace{2.43cm} \forall s,\alpha\geq 0, \\[12pt]
\|(1 - S_{\theta}) f\|_{\mathcal{A}_{\ell}^{s}} \leq C_{\varrho}\theta^{s-\alpha}\|f\|_{\mathcal{A}_{\ell}^{\alpha}},
\hspace{1.96cm} \forall 0\leq s\leq\alpha, \\[12pt]
\|(S_{\theta_n} - S_{\theta_{n-1}}) f\|_{\mathcal{A}_{\ell}^{s}} \leq C_{\varrho}\theta_n^{s-\alpha}\triangle\theta_n \|f\|_{\mathcal{A}_{\ell}^{\alpha}},
\hspace{0.44cm} \forall s,\alpha\geq 0, \\[14pt]

\|[\frac{1}{\partial_y u^s},S_{\theta}]f\|_{\mathcal{A}_{\ell}^k} \leq C_{\varrho} \|\frac{f}{\partial_y u^s}\|_{\mathcal{A}_{\ell}^k},
\hspace{2.05cm} \forall k\leq k_0,\\[14pt]
\|\partial_y[\frac{1}{\partial_y u^s},S_{\theta}]f\|_{\mathcal{A}_{\ell}^k} \leq C_{\varrho} \theta\|\frac{f}{\partial_y u^s}\|_{\mathcal{A}_{\ell}^k},
\hspace{1.52cm} \forall k\leq k_0,
\end{array}
\end{equation}
where $\theta_n =\sqrt{\theta_0^2 + n}$, $\theta_0\gg 1$,
$\triangle\theta_n =\theta_{n+1} - \theta_n$.
\end{proposition}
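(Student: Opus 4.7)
The plan is to prove the five estimates in Proposition \ref{Sect4_Nonlinear_Op_Proposition_1} by exploiting the convolution structure of $S_{\theta}$ and Young's inequality, together with Taylor expansion to extract extra powers of $\theta^{-1}$ from the support of $\varrho_\theta$. A preliminary reduction is to observe that on the support of $\varrho_{\theta}(\eta)$ one has $|\eta|\leq\theta^{-1}\leq\theta_0^{-1}\ll 1$, so $\langle y\rangle^{\ell}$ and $\langle y-\eta+\theta^{-1}\rangle^{\ell}$ are comparable up to a constant $C_{\varrho}$. This lets the weight pass through the convolution freely at the price of a harmless constant, reducing all bounds to unweighted bounds after inserting $\langle\cdot\rangle^{\ell}$ inside the integrand.

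For the first three estimates the strategy is standard. For (4.2.1) I distribute the $s$-th order derivative (tangential plus half-the-normal, per the $[(k_2+1)/2]$ convention) between $Ef$ and $\varrho_{\theta}$ by integration by parts inside the convolution. When $s\leq\alpha$ all derivatives land on $Ef$ and $\|\varrho_{\theta}\|_{L^1}=1$ gives the bound. When $s>\alpha$ I place $\alpha$ derivatives on $Ef$ and the remaining $s-\alpha$ on $\varrho_{\theta}$; since $\partial^{j}\varrho_{\theta}(\tau)=\theta^{1+j}(\partial^{j}\varrho)(\theta\tau)$ has $L^1$ norm $\theta^{j}\|\partial^{j}\varrho\|_{L^1}$, this produces the factor $\theta^{(s-\alpha)_{+}}$. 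For (4.2.2) I use $\int\varrho_{\theta}=1$ to write $(1-S_{\theta})f$ as a convolution against differences $f(t,x,y)-Ef(t-\tau+\theta^{-1},x-\xi,y-\eta+\theta^{-1})$, then apply Taylor's formula $\alpha-s$ times; each step produces one derivative of $f$ plus a factor $\theta^{-1}$ from $|(\tau,\xi,\eta)|\lesssim\theta^{-1}$, giving the net factor $\theta^{s-\alpha}$. For (4.2.3) I write $S_{\theta_n}-S_{\theta_{n-1}}=\int_{\theta_{n-1}}^{\theta_n}\partial_{\theta}S_{\theta}\,d\theta$; a direct computation shows $\partial_{\theta}S_{\theta}$ obeys the same type of estimate as $S_{\theta}$ with an extra factor $\theta^{-1}$, and multiplying by the interval length $\Delta\theta_n$ yields $\theta_n^{s-\alpha}\Delta\theta_n$.

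The commutator estimates (4.2.4)--(4.2.5) are the main obstacle. The idea is to write
\begin{equation*}
\bigl[\tfrac{1}{\partial_y u^s},S_{\theta}\bigr]f(t,x,y)=\iiint\varrho_{\theta}(\tau)\varrho_{\theta}(\xi)\varrho_{\theta}(\eta)\Bigl(\tfrac{1}{\partial_y u^s(t,y)}-\tfrac{1}{\partial_y u^s(t-\tau+\theta^{-1},y-\eta+\theta^{-1})}\Bigr)Ef(\cdots)\,d\tau\,d\xi\,d\eta,
\end{equation*}
and Taylor-expand the bracketed difference once. The derivative $\partial_t(1/\partial_y u^s)$ and $\partial_y(1/\partial_y u^s)=-(\partial_{yy}u^s/(\partial_y u^s)^2)$ are both controlled by $\|u_{yy}^s/\partial_y u^s\|_{\mathcal{C}_0^{k_0+2}}$ (combined with the monotonicity bound $\partial_y u^s>0$ from Theorem \ref{Sect2_Theorem}), and the Taylor remainder produces a factor $\theta^{-1}$ that absorbs either the $1/\partial_y u^s$ on the $f$ side or, in the $\partial_y$-version, the factor $\theta$ generated when $\partial_y$ falls on $\varrho_{\theta}$. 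Higher-order Leibniz expansions for $\partial_{(t,x)}^{k_1}\partial_y^{k_2}$ of the commutator generate analogous terms with products of derivatives of $1/\partial_y u^s$ up to order $k_0+2$, each bounded by the hypothesis.

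The real bookkeeping difficulty lies in (4.2.5): one must show that the extra $\partial_y$ only costs a single factor of $\theta$ despite the fact that distributing it by Leibniz can also hit $1/\partial_y u^s$. The resolution is that when $\partial_y$ hits $1/\partial_y u^s$ one loses no power of $\theta$ and is controlled by $\|u_{yy}^s/\partial_y u^s\|_{\mathcal{C}_0^{k_0+2}}$, while when $\partial_y$ hits $\varrho_{\theta}$ the Taylor gain from the commutator structure is still available. Tracking the $[(k_2+1)/2]$ parity constraint in $\mathcal{A}_{\ell}^k$ through these Leibniz expansions, and verifying that all terms involve at most $k_0+2$ derivatives of $u_{yy}^s/\partial_y u^s$ (hence the restriction $k\leq k_0$), completes the proof.
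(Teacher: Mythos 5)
Your treatment of $(\ref{Sect4_Nonlinear_Op_Properties})_1$--$(\ref{Sect4_Nonlinear_Op_Properties})_3$ is the standard convolution/Young's inequality argument and is fine. There is, however, a genuine gap in the commutator estimates $(\ref{Sect4_Nonlinear_Op_Properties})_4$--$(\ref{Sect4_Nonlinear_Op_Properties})_5$, in two connected respects.

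First, the claim that $\partial_y(1/\partial_y u^s) = -\partial_{yy}u^s/(\partial_y u^s)^2$ is ``controlled by $\|u_{yy}^s/\partial_y u^s\|_{\mathcal{C}_0^{k_0+2}}$ combined with the monotonicity bound $\partial_y u^s>0$'' is false. Positivity of $\partial_y u^s$ gives no lower bound: since $u^s\to 1$ at infinity one has $\partial_y u^s=\beta u^s+w^1\to\beta-\beta=0$ from $(\ref{Sect2_W_Monotone})$, so $1/\partial_y u^s$ is unbounded, and $\partial_{yy}u^s/(\partial_y u^s)^2 = (\partial_{yy}u^s/\partial_y u^s)\cdot(1/\partial_y u^s)$ inherits this unboundedness. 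Every derivative of $1/\partial_y u^s$ carries such a factor, so a Taylor/Leibniz expansion with these derivatives as coefficients cannot close against $\|f/\partial_y u^s\|_{\mathcal{A}_\ell^k}$; moreover, in the Taylor remainder the offending $1/\partial_y u^s$ is evaluated at an intermediate point while $Ef$ sits at the shifted point, so they do not recombine into $E(f/\partial_y u^s)$ without further work.

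What is needed is an exact algebraic factorization rather than an approximate expansion. Writing $t'=t-\tau+\theta^{-1}$, $y'=y-\eta+\theta^{-1}$, the integrand of your commutator formula is
\begin{equation*}
\Bigl(\tfrac{1}{\partial_y u^s(t,y)}-\tfrac{1}{\partial_y u^s(t',y')}\Bigr)Ef(t',x',y')
=\bigl(I_5-1\bigr)\,\tfrac{Ef(t',x',y')}{\partial_y u^s(t',y')}\,,\qquad
I_5:=\tfrac{\partial_y u^s(t',y')}{\partial_y u^s(t,y)}\,,
\end{equation*}
so the second factor is exactly $E(f/\partial_y u^s)$ at the shifted argument and only the ratio $I_5-1$ remains. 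That ratio, together with all its $\partial_{t,x}^{k_1}\partial_y^{k_2}$ derivatives with $k_1+[(k_2+1)/2]\leq k_0$, is bounded by $\|u_{yy}^s/u_y^s\|_{\mathcal{C}_0^{k_0+2}}$ alone via the exponential identity $\partial_y u^s(t,y_2)/\partial_y u^s(t,y_1)=\exp\{\int_{y_1}^{y_2}(u_{yy}^s/u_y^s)(t,\tilde y)\,\mathrm{d}\tilde y\}$ over an interval of length at most $2\theta^{-1}$; the time shift is handled the same way because the heat equation gives $u_{yt}^s/u_y^s=\partial_y(u_{yy}^s/u_y^s)+(u_{yy}^s/u_y^s)^2$, again controlled by the stated norm. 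This factorization and the exponential ratio bound are precisely the $I_5$ device the paper makes explicit in its proof of Proposition \ref{Sect4_Nonlinear_Op_Proposition_3} for $S_\theta^u$, and the identical argument applies to $S_\theta$ here; your proposal omits both ingredients, without which the commutator estimate does not close. Once $(\ref{Sect4_Nonlinear_Op_Properties})_4$ is obtained this way, your remark for $(\ref{Sect4_Nonlinear_Op_Properties})_5$ — that the extra $\partial_y$ costs at most one factor of $\theta$ when it lands on $\varrho_\theta$ and nothing when it lands on $I_5-1$ or on $E(f/\partial_y u^s)$ — is the correct observation.
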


\begin{remark}\label{Sect4_Remark_Dirichlet}
For the Prandtl equations with Dirichlet boundary condition, the smooth operator $S_{\theta}$ needs to be defined as
\begin{equation*}
\begin{array}{ll}
(S_{\theta} f)(t,x,y) = \iiint\limits_{\mathbb{R}^3} \varrho_{\theta}(\tau)\varrho_{\theta}(\xi)\varrho_{\theta}(\eta)
E f(t-\tau+\theta^{-1},x-\xi,y-\eta-\theta^{-1})\,\mathrm{d}\tau\mathrm{d}\xi\mathrm{d}\eta,
\end{array}
\end{equation*}
such that $S_{\theta} u|_{y=0} =S_{\theta} v|_{y=0} =0$ when $u|_{y=0}=v|_{y=0}=0$. See \cite{Alexandre_Yang_2014}, where
$S_{\theta} u|_{y=0} =S_{\theta} v|_{y=0} =0$ are used in the derivation of boundary conditions for the linearized Prandtl-type equations.
\end{remark}

\vspace{0.1cm}
In order to introduce the operators $S_{\theta}^u,S_{\theta}^v,E^u,E^v$ simply, we still use $u(t,x,y),v(t,x,y)$ here as the abstract functions, where  $\lim\limits_{y\rto +\infty} u(t,x,y)=0$, $u\in\mathcal{A}_{\ell}^{s_1}, v\in\mathcal{D}_0^{s_2}$ for some $s_1,s_2$. Actually, the functions that need to be mollified are $\tilde{u}^0, \delta u^j$ and $\tilde{v}^0,\delta v^j$ instead of $u,v$.

We define the smoothing operators $S_{\theta}^u,S_{\theta}^v$ as follows:
\begin{equation*}
\begin{array}{ll}
(S_{\theta}^u u)(t,x,y) = \iiint\limits_{\mathbb{R}^3} \varrho_{\theta}(\tau)\varrho_{\theta}(\xi)\varrho_{\theta}(\eta)
E^u u(t-\tau+\theta^{-1},x-\xi,y-\eta)\,\mathrm{d}\tau\mathrm{d}\xi\mathrm{d}\eta, \\[14pt]

(S_{\theta}^v v)(t,x,y) = \iiint\limits_{\mathbb{R}^3} \varrho_{\theta}(\tau)\varrho_{\theta}(\xi)\varrho_{\theta}(\eta)
E^v v(t-\tau+\theta^{-1},x-\xi,y-\eta)\,\mathrm{d}\tau\mathrm{d}\xi\mathrm{d}\eta,
\end{array}
\end{equation*}
where the extension operators $E^u, E^v$ are defined as follows:
\begin{equation}\label{Sect4_Extended_Fuctions}
\begin{array}{ll}
E^u u(t,x,y) := \left\{\begin{array}{ll}
u(t,x,y),\hspace{1.32cm} y\geq 0, \\[9pt]
u(t,x,-y), \hspace{0.99cm} -\theta^{-1}<y<0, \\[9pt]
0, \hspace{2.41cm} y\leq -\theta^{-1}.
\end{array}\right. \\[33pt]

E^v v(t,x,y) := \left\{\begin{array}{ll}
v(t,x,y),\hspace{1.34cm} y\geq 0, \\[9pt]
-v(t,x,-y),\hspace{0.75cm} -\theta^{-1}<y<0, \\[9pt]
0, \hspace{2.43cm} y\leq -\theta^{-1}.
\end{array}\right.
\end{array}
\end{equation}

\begin{remark}\label{Sect4_Operator_SU_Remark}
Though $(\partial_y u -\beta u)|_{y=0} =0$, it is unnecessary to have $[\partial_y (S_{\theta}^u u) -\beta (S_{\theta}^u u)]|_{y=0} =0$, because
$(\ref{Sect3_Background_Condition_1})$, $(\ref{Sect3_Background_Condition_2})$ do not need the condition $(\partial_y\tilde{u} -\beta\tilde{u})|_{y=0} =0$.
Since $u|_{y=0}$ may not vanish, $(S_{\theta}^u u)|_{y=0}$ does not equal zero in general,
due to the even symmetry of $E^u u$ with respect to $y$.
\end{remark}

The operators $E^u,E^v,S_{\theta}^u, S_{\theta}^v$ have the following basic properties:
\begin{proposition}\label{Sect4_Nonlinear_Op_Proposition_2}
If $u_x+v_y=0,\ v|_{y=0}=0$, then
\begin{equation}\label{Sect4_Nonlinear_Op_Properties_2}
\begin{array}{ll}
(S^v_{\theta} v)|_{y=0} =0, \\[10pt]
(S^u_{\theta} u)_x + (S^v_{\theta} v)_y = 0,\\[10pt]

\|E^u u\|_{\mathcal{A}_{\ell}^{\alpha}([0,T]\times\mathbb{R}^2)} \leq 2\|u\|_{\mathcal{A}_{\ell}^{\alpha}([0,T]\times\mathbb{R}_{+}^2)}, \\[10pt]
\|E^v v\|_{\mathcal{D}_0^{\alpha}([0,T]\times\mathbb{R}^2)} \leq 2\|v\|_{\mathcal{D}_0^{\alpha}([0,T]\times\mathbb{R}_{+}^2)}.
\end{array}
\end{equation}
\end{proposition}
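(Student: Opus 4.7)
The plan is to verify the four claims separately by direct computation, exploiting the reflection structure built into $E^u,E^v$ together with the evenness and compact support of $\varrho$.

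For the boundary vanishing $(S_\theta^v v)|_{y=0}=0$, I would use that $v|_{y=0}=0$ makes $E^v v$ odd in its third variable on $(-\theta^{-1},\theta^{-1})$. Restricting $S_\theta^v v$ to $y=0$ confines the $\eta$-integration to $[-\theta^{-1},\theta^{-1}]$, and on this interval $E^v v(\cdot,\cdot,-\eta)$ is odd in $\eta$ while $\varrho_\theta(\eta)$ is even; the integral therefore vanishes after the substitution $\eta\mapsto-\eta$.

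For the divergence-free identity $(S_\theta^u u)_x+(S_\theta^v v)_y=0$, I would show that $\partial_x E^u u+\partial_y E^v v=0$ as a distribution modulo a delta concentrated at the cutoff $y=-\theta^{-1}$, and argue that this delta never reaches the convolution for $y\geq 0$. On $\{y>0\}$ the relation reduces to the hypothesis $u_x+v_y=0$; on $\{-\theta^{-1}<y<0\}$ the chain rule gives $\partial_x[u(t,x,-y)]+\partial_y[-v(t,x,-y)]=(u_x+v_y)(t,x,-y)=0$; at the interface $y=0$ both $E^u u$ and $E^v v$ are continuous (the latter exactly because $v|_{y=0}=0$), so no jump-delta arises. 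At $y=-\theta^{-1}$, $E^v v$ jumps, producing a delta in $\partial_y E^v v$ of the form $c(t,x)\,\delta_{y=-\theta^{-1}}$ with $c(t,x)=\pm v(t,x,\theta^{-1})$. Convolving with $\varrho_\theta$ in $y$ yields $c(t,x)\,\varrho_\theta(y+\theta^{-1})$; for $y\geq 0$ the argument $y+\theta^{-1}\geq\theta^{-1}$ lies at or outside the endpoint of $\mathrm{Supp}\,\varrho_\theta=[-\theta^{-1},\theta^{-1}]$, and since $\varrho\in C_0^\infty(\mathbb{R})$ with $\mathrm{Supp}\,\varrho\subset[-1,1]$ forces $\varrho(\pm 1)=0$ by continuity, this contribution vanishes identically on $\{y\geq 0\}$. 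The remaining regular part convolves to zero, giving the claim in the physical half-space.

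For the norm estimates (3) and (4), I would split the defining integral in $y$ into $\{y\geq 0\}$, $\{-\theta^{-1}<y<0\}$, and $\{y\leq-\theta^{-1}\}$. The first piece coincides with $\|u\|_{\mathcal{A}_\ell^\alpha(\Omega_T)}^2$; for the second, the change of variables $z=-y$ together with $<y>^{2\ell}=<-y>^{2\ell}$ and $|\partial_{t,x}^{k_1}\partial_y^{k_2} u(t,x,-y)|^2=|\partial_{t,x}^{k_1}\partial_y^{k_2} u(t,x,z)|^2$ bounds the contribution by the same integrand over $[0,\theta^{-1}]$, hence by $\|u\|_{\mathcal{A}_\ell^\alpha(\Omega_T)}^2$; the third piece is zero. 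Summing the two contributions and taking the square root yields the factor $2$. The argument for $E^v v$ in the $\mathcal{D}_0^\alpha$-norm is identical, replacing $L^2$ by $L_y^\infty(L_{t,x}^2)$.

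I expect the main difficulty to lie in the divergence-free identity: the sharp cutoff at $y=-\theta^{-1}$ produces distributional jumps that a priori threaten to spoil the identity (and would make the norms in (3)--(4) infinite if taken in full distributional sense). The resolution is the support-geometry observation combined with $\varrho(\pm 1)=0$, which guarantees that the offending delta never enters the convolution window for $y\geq 0$; for the norm estimates, the jump sets carry zero Lebesgue measure and are harmless.
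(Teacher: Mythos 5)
Your proof is correct and follows the same route as the paper's: odd/even symmetry for $(S^v_\theta v)|_{y=0}=0$, the extended divergence relation for the second identity, and the reflection for the norm bounds. The paper's proof is terse---it asserts $(E^u u)_x+(E^v v)_y=0$ on all of $[0,T]\times\mathbb{R}^2$ without qualification and calls the norm bounds ``easy to prove''---whereas you correctly isolate the jump-delta of $\partial_y E^v v$ at $y=-\theta^{-1}$ and show its convolved trace $\varrho_\theta(y+\theta^{-1})$ vanishes identically on $\{y\geq 0\}$, since $\varrho\in C_0^\infty$ with $\mathrm{Supp}\,\varrho\subset[-1,1]$ forces $\varrho(\pm1)=0$; this is exactly the step the paper leaves implicit. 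Your reading of the $\mathcal{A}_\ell^\alpha$ and $\mathcal{D}_0^\alpha$ norms of $E^u u$, $E^v v$ as built from a.e.\ pointwise derivatives is likewise the only interpretation under which the bound with constant $2$ makes sense, given that $\partial_y E^u u$ and $\partial_y E^v v$ jump at $y=0$ and at $y=-\theta^{-1}$.
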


\begin{proof}
Since $(E^u u)_x + (E^v v)_y =0$ in $[0,T]\times\mathbb{R}^2$, we have $(S^u_{\theta} u)_x + (S^v_{\theta} v)_y = 0$.
It is obvious that $(S^v_{\theta} v)|_{y=0} =0$ by the definition of $S_{\theta}^v$ and the odd symmetry of $E^v v$ with respect to $y$.

By the definition of $(\ref{Sect4_Extended_Fuctions})$, it is easy to prove $(\ref{Sect4_Nonlinear_Op_Properties_2})_3$ and $(\ref{Sect4_Nonlinear_Op_Properties_2})_4$.
\end{proof}

Furthermore, $S_{\theta}^u$ and $S_{\theta}^v$ have the following properties:
\begin{proposition}\label{Sect4_Nonlinear_Op_Proposition_3}
Assume $\|\frac{u_{yy}^s}{u_y^s}\|_{\mathcal{C}_0^{k_0+2}} <+\infty$,
the operators $S_{\theta}^u$ and $S_{\theta}^v$ follow the following rules:
\begin{equation}\label{Sect4_Nonlinear_Op_Properties_3}
\begin{array}{ll}
\|S_{\theta}^u u\|_{\mathcal{A}_{\ell}^{s}} \leq C_{\varrho}\theta^{(s-\alpha)_{+}}\|u\|_{\mathcal{A}_{\ell}^{\alpha}},\hspace{2.47cm}
\forall s\geq 0, \\[10pt]

\|(1 - S_{\theta}^u) u\|_{\mathcal{A}_{\ell}^{s}} \leq C_{\varrho}\theta^{s-\alpha}\|u\|_{\mathcal{A}_{\ell}^{\alpha}},\hspace{2cm}
\forall 0\leq s\leq\alpha, \\[10pt]

\|(S_{\theta_n}^u - S_{\theta_{n-1}}^u) u\|_{\mathcal{A}_{\ell}^{s}} \leq C_{\varrho}\theta_n^{s-\alpha}\triangle\theta_n
\|u\|_{\mathcal{A}_{\ell}^{\alpha}},\hspace{0.53cm}
\forall s, \alpha\geq 0,
\end{array}
\end{equation}

\begin{equation*}
\begin{array}{ll}
\|[\frac{1}{\partial_y u^s},S_{\theta}^u]u\|_{\mathcal{A}_{\ell}^k}
\leq C_{\varrho} \|\frac{u}{\partial_y u^s}\|_{\mathcal{A}_{\ell}^k}, \hspace{2.1cm} \forall k\leq k_0,\\[10pt]
\|\partial_y[\frac{1}{\partial_y u^s},S_{\theta}^u]u\|_{\mathcal{A}_{\ell}^k}
\leq C_{\varrho} \theta\|\frac{u}{\partial_y u^s}\|_{\mathcal{A}_{\ell}^k}, \hspace{1.58cm} \forall k\leq k_0,\\[10pt]

\|S_{\theta}^v v\|_{\mathcal{D}_0^{s}} \leq C_{\varrho}\theta^{(s-\alpha)_{+}}\|v\|_{\mathcal{D}_{0}^{\alpha}},\hspace{2.56cm}
\forall s\geq 0, \\[10pt]

\|(1 - S_{\theta}^v) v\|_{\mathcal{D}_0^{s}} \leq C_{\varrho}\theta^{s-\alpha}\|v\|_{\mathcal{D}_{0}^{\alpha}},\hspace{2.09cm}
\forall 0\leq s\leq\alpha.
\end{array}
\end{equation*}
\end{proposition}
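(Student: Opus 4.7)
The overall strategy is to transfer each estimate from the half-space $\mathbb{R}_+^2$ to the whole space $\mathbb{R}^2$ via the extension operators $E^u, E^v$, and then invoke the standard mollifier estimates already established for $S_\theta$ in Proposition \ref{Sect4_Nonlinear_Op_Proposition_1}. Concretely, one writes $S_\theta^u u = \rho_\theta \ast (E^u u)$ and $S_\theta^v v = \rho_\theta \ast (E^v v)$ (with the time shift $\theta^{-1}$), observes that on the half-space $S_\theta^u u$ agrees with the restriction of $\rho_\theta \ast (E^u u)$, and then uses Proposition \ref{Sect4_Nonlinear_Op_Proposition_2} to convert whole-space norms of $E^u u$ and $E^v v$ back into half-space norms of $u$ and $v$ with a harmless factor of $2$.

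For the three basic bounds on $S_\theta^u u$ and the two on $S_\theta^v v$, I would first verify that the extension $E^u u$ has weighted Sobolev regularity controlled by that of $u$: the even extension of $u$ across $y=0$ preserves the number of tangential derivatives trivially, and for the mixed normal-tangential index $k_1 + \lceil (k_2+1)/2\rceil \le \alpha$ the even symmetry gives an extension that is only $H^1$ rather than $H^\alpha$ in $y$; however, because the definition of $\mathcal{A}_\ell^\alpha$ counts $\partial_y^{k_2}$ with weight $\lceil (k_2+1)/2\rceil$ and because normal derivatives are applied to $E^u u$ only for $|y|<\theta^{-1}$ before mollification cuts them off at the $\theta$-scale, the resulting gain/loss of powers of $\theta$ is exactly the one predicted by $(s-\alpha)_+$, $s-\alpha$, and $\triangle \theta_n$, respectively. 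Once this is accepted, the three estimates follow verbatim from the corresponding statements in Proposition \ref{Sect4_Nonlinear_Op_Proposition_1} applied to $E^u u$, and the two $\mathcal{D}_0^s$ bounds for $S_\theta^v v$ follow analogously using the odd extension $E^v v$ (the odd symmetry does not change the $L_y^\infty(L_{t,x}^2)$ norm structure that defines $\mathcal{D}_0^s$).

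For the commutator estimates, the argument is essentially the one used for the classical $S_\theta$: write
\begin{equation*}
\Bigl[\tfrac{1}{\partial_y u^s}, S_\theta^u\Bigr] u(t,x,y) = \iiint \rho_\theta(\tau)\rho_\theta(\xi)\rho_\theta(\eta) \Bigl(\tfrac{1}{\partial_y u^s(t,y)} - \tfrac{1}{\partial_y u^s(t-\tau+\theta^{-1}, y-\eta)}\Bigr) (E^u u)(\cdots)\, d\tau\, d\xi\, d\eta,
\end{equation*}
Taylor-expand the difference of reciprocals to first order in $(\tau,\eta)$, use that the support of $\rho_\theta$ has size $\theta^{-1}$ to absorb the factor $(\tau,\eta)$, and exploit the hypothesis $\|u_{yy}^s/u_y^s\|_{\mathcal{C}_0^{k_0+2}} < +\infty$ to bound $\partial_{t,y}(1/\partial_y u^s)$ pointwise in terms of $1/\partial_y u^s$. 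This produces the desired $\|u/\partial_y u^s\|_{\mathcal{A}_\ell^k}$ on the right-hand side without a factor of $\theta$; for the $\partial_y$ variant, one extra $y$-derivative falls on $\rho_\theta$ and gives the advertised factor $\theta$, after which the same Taylor argument closes.

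The main obstacle I anticipate is the commutator estimate for $k\le k_0$: keeping track of how $k$ tangential-plus-normal derivatives distribute between the coefficient $1/\partial_y u^s$ and the mollified quantity, while simultaneously respecting the weighted half-integer counting $k_1+\lceil(k_2+1)/2\rceil \le k$ and the weight $\langle y\rangle^\ell$, requires a careful Leibniz expansion together with the control $\|u_{yy}^s/u_y^s\|_{\mathcal{C}_0^{k_0+2}}$ to bound all arising factors of derivatives of $u^s$. The other subtle point is checking that the reflection used in $E^u$ does not contaminate the normal-regularity accounting when $s$ is taken larger than $\alpha$: one must verify that $(s-\alpha)_+$ is actually achievable with the even extension, which is true precisely because mollification at scale $\theta^{-1}$ smooths out the reflection singularity at $y=0$ and the $\mathcal{A}_\ell^s$ norm admits half-integer loss in the $y$-direction. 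All other statements reduce, by Proposition \ref{Sect4_Nonlinear_Op_Proposition_2} and standard mollifier theory, to routine computations.
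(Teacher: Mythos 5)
Your outline for the first three estimates in $(\ref{Sect4_Nonlinear_Op_Properties_3})$ and for the two $\mathcal{D}_0$-bounds on $S_\theta^v v$ — reducing the half-space mollifiers to the whole-space one via $E^u, E^v$ and Proposition \ref{Sect4_Nonlinear_Op_Proposition_2}, then distributing derivatives between $\varrho_\theta$ and the extended function — is consistent with the paper's terse remark that these follow from properties of the convolution.

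The commutator bounds are where the proposal has a genuine gap. Your displayed identity
\begin{equation*}
\bigl[\tfrac{1}{\partial_y u^s}, S_\theta^u\bigr] u(t,x,y)
= \iiint \varrho_\theta(\tau)\varrho_\theta(\xi)\varrho_\theta(\eta)
\Bigl(\tfrac{1}{\partial_y u^s(t,y)} - \tfrac{1}{\partial_y u^s(t-\tau+\theta^{-1},\,y-\eta)}\Bigr)
(E^u u)(\cdots)\,d\tau\,d\xi\,d\eta
\end{equation*}
is not the commutator. By definition $S_\theta^u(u/\partial_y u^s)$ is a convolution against $E^u(u/\partial_y u^s)$, not against $E^u u / \partial_y u^s(\cdot)$, and these two objects differ precisely on the reflection strip $-\theta^{-1} < y-\eta < 0$: there $E^u u(t',x',y-\eta)=u(t',x',\eta-y)$ while $E^u(u/\partial_y u^s)(t',x',y-\eta)=u(t',x',\eta-y)/\partial_y u^s(t',\eta-y)$, so the denominator you would need to insert is $\partial_y u^s(t',\eta-y)$ evaluated at the \emph{reflected} argument, not $\partial_y u^s(t',y-\eta)$ at the negative argument where $u^s$ is not even defined. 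Because of this, the Taylor expansion you propose is applied to an expression that is not equal to the commutator, and the structure you need never emerges.

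The paper's proof is built around exactly this reflection mismatch. It writes the commutator integrand as $\varrho_\theta\,[I_5-1]\,E^u(\tfrac{u}{\partial_y u^s})(\cdots)$, where $I_5$ is the ratio of $E^u u(\cdots)/\partial_y u^s(t,y)$ to $E^u(u/\partial_y u^s)(\cdots)$. This ratio reduces to $\partial_y u^s(t,\eta-y)/\partial_y u^s(t,y)$ on the reflection strip (and to $1$ modulo the harmless shift elsewhere), and together with its first $k_0$ derivatives it is bounded by means of the identity $\tfrac{\partial_y u^s(t,y_2)}{\partial_y u^s(t,y_1)} = \exp\{\int_{y_1}^{y_2}\tfrac{u_{yy}^s}{u_y^s}\,d\tilde y\}$ — which is where the hypothesis $\|u_{yy}^s/u_y^s\|_{\mathcal{C}_0^{k_0+2}}<+\infty$ enters. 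Since the integrand is already written against $E^u(u/\partial_y u^s)$, the estimate comes out directly in the norm $\|u/\partial_y u^s\|_{\mathcal{A}_\ell^k}$, with the extra $\partial_y$ landing on $\varrho_\theta$ producing the factor $\theta$ in the second commutator bound. Your proposal, working against $E^u u$, would need the further observation $E^u u(t,x,y) = \partial_y u^s(t,|y|)\,E^u(u/\partial_y u^s)(t,x,y)$ to convert to the stated right-hand side — but this relation is precisely the ratio $I_5$ in disguise, so the $I_5$-type factorization cannot be avoided.
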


\begin{proof}
Based on the properties of the convolution, it is easy to prove $(\ref{Sect4_Nonlinear_Op_Properties_3})$. But note that
\begin{equation}\label{Sect4_Nonlinear_Op_Properties_Proof1}
\begin{array}{ll}
[\frac{1}{\partial_y u^s},S_{\theta}^u]u(t,x,y) = \iiint\limits_{\mathbb{R}^3} \varrho_{\theta}(\tau)\varrho_{\theta}(\xi)\varrho_{\theta}(\eta)
[I_5(t-\tau+\theta^{-1},x-\xi,y-\eta)-1]\,\cdot \\[13pt]\hspace{3.2cm}
E^u(\frac{u}{\partial_y u^s}) (t-\tau+\theta^{-1},x-\xi,y-\eta)\,\mathrm{d}\tau\mathrm{d}\xi\mathrm{d}\eta,
\end{array}
\end{equation}
where \vspace{-0.5cm}
\begin{equation}\label{Sect4_Nonlinear_Op_Properties_Proof2}
\begin{array}{ll}
I_5(t,x,y-\eta) :=\cfrac{\frac{E^u u(t,x,y-\eta)}{\partial_y u^s(t,y)}}
{E^u\, \frac{u}{\partial_y u^s}(t,x,y-\eta)}

= \left\{\begin{array}{ll}
0, \hspace{1.3cm} \eta\geq y+ \theta^{-1}, \\[7pt]
1, \hspace{1.3cm} \eta\leq y, \\[8pt]
\frac{u_y^s(t,y-\eta)}{u_y^s(t,y)},
\hspace{0.15cm} y<\eta< y+ \theta^{-1}.
\end{array}\right.
\end{array}
\end{equation}

\vspace{-0.2cm}
Then $\sum\limits_{k_1+[\frac{k_2+1}{2}]\leq k_0} |\partial_{t,x}^{k_1}\partial_y^{k_2} I_5|_{\infty} \leq \max\big\{1, C\big(\|\frac{u_{yy}^s}{u_y^s}\|_{\mathcal{C}_0^{k_0+2}}\big)\big\}<+\infty$ by using $\frac{\partial_y u^s(t,y_2)}{\partial_y u^s(t,y_1)} 
=\exp\big\{\int\limits_{y_1}^{y_2} \frac{u_{yy}^s}{u_y^s}(t,\tilde{y}) \,\mathrm{d}\tilde{y} \big\}$, thus $(\ref{Sect4_Nonlinear_Op_Properties_3})_4$ and $(\ref{Sect4_Nonlinear_Op_Properties_3})_5$ can be proved.
\end{proof}

Similar to \cite{Alexandre_Yang_2014}, we introduce the nonlinear operator which corresponds to the Prandtl system $(\ref{Sect1_PrandtlEq})$:
\begin{equation}\label{Sect4_Nonlinear_Op}
\begin{array}{ll}
\mathcal{P}(u,v) = u_t + u u_x + v u_y - u_{yy},
\end{array}
\end{equation}
and its linearized operator around the background state $(\tilde{u},\tilde{v})$:
\begin{equation}\label{Sect4_Linearized_Op}
\begin{array}{ll}
\mathcal{P}_{(\tilde{u},\tilde{v})}^{\prime}(u,v) = u_t + \tilde{u} u_x + \tilde{v} u_y + u \tilde{u}_x + v \tilde{u}_y - u_{yy}.
\end{array}
\end{equation}

Assume that for $k=0,\ldots,n$, we have constructed the approximate solutions $(u^k,v^k)$ of $(\ref{Sect1_PrandtlEq})$, then we need to construct the $(n+1)-th$ approximate solution $(u^{n+1},v^{n+1})$ of $(\ref{Sect1_PrandtlEq})$. Set
\begin{equation}\label{Sect4_Solutions_Decomp}
\left\{\begin{array}{ll}
u^{n+1} = u^n + \delta u^n = u^s + \tilde{u}^n + \delta u^n,\\[7pt]
v^{n+1} = v^n + \delta v^n,
\end{array}\right.
\end{equation}
where $(u^s,0)$ is the shear flow, the increments $(\delta u^n,\delta v^n)$ are the solutions to the following IBVP:
\begin{equation}\label{Sect4_Increment_Eq}
\left\{\begin{array}{ll}
\mathcal{P}_{(u_{\theta_n}^n, v_{\theta_n}^n)}^{\prime}
(\delta u^n,\delta v^n) = f^n, \\[10pt]
(\delta u^n)_x + (\delta v^n)_y =0, \\[10pt]
(\partial_y \delta u^n -\beta\delta u^n)|_{y=0} =0,\quad \delta v^n |_{y=0} =0,\\[10pt]
\lim\limits_{y\rto +\infty}\delta u^n =0, \\[10pt]
\delta u^n|_{t\leq 0} =0,
\end{array}\right.
\end{equation}
where $u_{\theta_n}^n = u^s + S_{\theta_n}^u\tilde{u}^n,\ v_{\theta_n}^n =S_{\theta_n}^v v^n$.

Let
\begin{equation}\label{Sect4_Increment_Eq_1}
\begin{array}{ll}
\mathcal{P}(u^{n+1},v^{n+1}) - \mathcal{P}(u^n,v^n)
= \mathcal{P}_{(u_{\theta_n}^n, v_{\theta_n}^n)}^{\prime} (\delta u^n, \delta v^n) + e_n,
\end{array}
\end{equation}
then the error $e_n$ can be decomposed:
$$e_n = e_n^{(1)} + e_n^{(2)},$$
where $e_n^{(1)}$ is the error comes from the iteration scheme, $e_n^{(2)}$ is the error comes from mollifying the coefficients.
Thus, $e_n^{(1)}, e_n^{(2)}$ are defined as
\begin{equation}\label{Sect4_Increment_en1}
\begin{array}{ll}
e_n^{(1)} = \mathcal{P}(u^{n+1},v^{n+1}) - \mathcal{P}(u^n,v^n)
- \mathcal{P}_{(u^n, v^n)}^{\prime} (\delta u^n, \delta v^n)
\\[7pt]\hspace{0.62cm}
= \mathcal{P}(u^{n} + \delta u^{n},v^{n}+ \delta v^{n}) - \mathcal{P}(u^n,v^n)
- \mathcal{P}_{(u^n, v^n)}^{\prime} (\delta u^n, \delta v^n)
\\[7pt]\hspace{0.6cm}

= \delta u^{n}(\delta u^n)_x + \delta v^{n}(\delta u^n)_y \ ,
\end{array}
\end{equation}
\noindent
and
\begin{equation}\label{Sect4_Increment_en2}
\begin{array}{ll}
e_n^{(2)} = \mathcal{P}_{(u^n, v^n)}^{\prime} (\delta u^n, \delta v^n)
-\mathcal{P}_{(u_{\theta_n}^n, v_{\theta_n}^n)}^{\prime} (\delta u^n, \delta v^n)
\\[9pt]\hspace{0.65cm}

= [(u^n - u^n_{\theta_n})(\delta u^n)]_x + (v^n- v^n_{\theta_n})(\delta u^n)_y
+ \delta v^{n} (u^{n}-u_{\theta_n}^{n})_y

\\[9pt]\hspace{0.66cm}
= [(1- S_{\theta_n}^u)\tilde{u}^n \delta u^n]_x + (1- S_{\theta_n}^v)v^n(\delta u^n)_y
+ \delta v^{n} [(1- S_{\theta_n}^u) \tilde{u}^{n}]_y \ .
\end{array}
\end{equation}

Sum $(\ref{Sect4_Increment_Eq_1})$ from $0$ to $n$, we get
\begin{equation}\label{Sect4_Increment_Eq_2}
\begin{array}{ll}
\mathcal{P}(u^{n+1},v^{n+1}) = \sum\limits_{j=0}^n
[\mathcal{P}_{(u_{\theta_n}^j, v_{\theta_n}^j)}^{\prime} (\delta u^j,\delta v^j)
+ e_j] + f^a,
\end{array}
\end{equation}
where
\begin{equation}\label{Sect4_Fa_Define}
\begin{array}{ll}
f^a := \mathcal{P}(u^0,v^0)= u_t^0 + u^0 u_x^0 + v^0 u_y^0 - u_{yy}^0 \ ,
\end{array}
\end{equation}
and $u^0,v^0$ will be determined in Subsection 4.2.

Note that $f^a, e_n^{(1)}, e_n^{(2)}$ have no restrictions on the boundary, so we use $S_{\theta_n}$ to mollify them.
Similar to $\cite{Alexandre_Yang_2014}$, $f^n$ can be defined by induction on $n$, i.e.,
\begin{equation}\label{Sect4_Increment_Eq_3}
\begin{array}{ll}
\sum\limits_{j=0}^n f^j = -S_{\theta_n}(\sum\limits_{j=0}^{n-1}e_j) - S_{\theta_n}f^a,
\end{array}
\end{equation}
then we have the formulae as follows:
\begin{equation}\label{Sect4_Increment_Eq_4}
\left\{\begin{array}{ll}
f^0 = - S_{\theta_0}f^a,\\[9pt]
f^1 = (S_{\theta_0} - S_{\theta_1})f^a - S_{\theta_1} e_0, \\[9pt]
f^n = (S_{\theta_{n-1}} - S_{\theta_n})(\sum\limits_{j=0}^{n-2}e_j) - S_{\theta_n} e_{n-1}
+ (S_{\theta_{n-1}} - S_{\theta_n})f^a,\quad \forall n\geq 2.
\end{array}\right.
\end{equation}

Finally, we have
\begin{equation}\label{Sect4_Increment_Eq_5}
\begin{array}{ll}
\mathcal{P}(u^{n+1},v^{n+1}) = \sum\limits_{j=0}^n f^j + \sum\limits_{j=0}^n e_j + f^a
\\[11pt]\hspace{2.3cm}
= e_n + (1 -S_{\theta_n})(\sum\limits_{j=0}^{n-1}e_j) + (1 - S_{\theta_n})f^a.
\end{array}
\end{equation}

In order to prove the convergence of the iteration,
\begin{equation*}
\begin{array}{ll}
\mathcal{P}(u^{n+1},v^{n+1}) \rto 0,\ \ pointwisely,\ \ as\ n\rto +\infty,
\end{array}
\end{equation*}
we need to prove in Section 5 that for some values of $k$,
\begin{equation*}
\begin{array}{ll}
\sum\limits_{j=0}^{+\infty}\|e_j\|_{\mathcal{A}_{\ell}^{k}}<+\infty.
\end{array}
\end{equation*}

\subsection{Construction of the Zero-th Order Approximate Solutions Satisfying Robin Boundary Condition}
In this subsection, we construct the zero-th order approximate solution by using the initial data $\tilde{u}_{0}(x,y) = u_{0}(x,y)-u_0^s(y)$.
Denote
\begin{equation}\label{Sect4_Zeroth_Order_Sol_1}
\left\{\begin{array}{ll}
\tilde{u}^0(t,x,y) := \sum\limits_{j=0}^{k_0}\frac{t^j}{j!} \partial_t^j \tilde{u}_0(x,y), \\[12pt]
v^0(t,x,y) := \sum\limits_{j=0}^{k_0}\frac{t^j}{j!} \partial_t^j v_0(x,y), \\[14pt]
u^0(t,x,y) = u^s(t,y) + \tilde{u}^0(t,x,y).
\end{array}\right.
\end{equation}
where $\partial_t^j \tilde{u}_0(x,y)$ and $\partial_t^j v_0(x,y)$ are defined by induction:
\begin{equation}\label{Sect4_Zeroth_Order_Sol_2}
\left\{\begin{array}{ll}
\partial_t^j \tilde{u}_0 = \partial_{yy} (\partial_t^{j-1} \tilde{u}_0)
- \sum\limits_{m=0}^{j-1} C_{j-1}^{m} (\partial_t^{m} u^s_0 + \partial_t^{m} \tilde{u}_0)\partial_x(\partial_t^{j-1-m} \tilde{u}_0)
\\[8pt]\hspace{1.2cm}
- \sum\limits_{m=0}^{j-1} C_{j-1}^{m} (\partial_t^{m} v_0)\partial_y(\partial_t^{j-1-m} u^s_0 + \partial_t^{j-1-m} \tilde{u}_0),\hspace{0.4cm} 1\leq j\leq 2k_0+1, \\[12pt]

\partial_t^j v_0(x,y) = -\int\limits_0^y \partial_x \partial_t^j \tilde{u}_0(x,\tilde{y}) \,\mathrm{d}\tilde{y}, \hspace{3.66cm} 0\leq j\leq 2k_0.
\end{array}\right.
\end{equation}

Then we have the following proposition relating to $\tilde{u}^0,\ v^0$:
\begin{proposition}\label{Sect4_Data_Proposition}
If $\tilde{u}_0$ satisfies $\partial_y^{2j}(\partial_y \tilde{u}_0(x,y) - \beta \tilde{u}_0(x,y))|_{y=0}=0,\ 0\leq j\leq 2k_0$
and $u_0^s$ satisfies $\partial_y^{2j}(\partial_y u_0^s(y) - \beta u_0^s(y))|_{y=0}=0,\ 0\leq j\leq 2k_0$,
$\lim\limits_{y\rto +\infty} \tilde{u}_0(x,y) = 0$, $\tilde{u}_0\in \mathcal{A}_{\ell}^{2k_0+1}(\mathbb{R}_{+}^2, t=0)$,
then $(\tilde{u}^0,v^0)$ satisfies
\begin{equation}\label{Sect4_Zeroth_Order_Sol_3}
\left\{\begin{array}{ll}
\tilde{u}_t^0 + (u^s+ \tilde{u}^0) \tilde{u}_x^0 + v^0 (u^s+ \tilde{u}^0)_y - \tilde{u}_{yy}^0 = f^a, \quad (x,y)\in\mathbb{R}_{+}^2,\ t>0, \\[9pt]
\tilde{u}_x^0 + v_y^0 =0, \quad u_x^0 + v_y^0 =0, \\[9pt]
\partial_{t}^{j_1}\partial_{x}^{j_2} v^0|_{y=0} =0, \hspace{2.2cm} 0\leq j_1+j_2\leq k_0, \\[11pt]
\partial_{t}^{j_1}\partial_{x}^{j_2}(\partial_y \tilde{u}^0 - \beta \tilde{u}^0)|_{y=0} = 0,\hspace{0.55cm} 0\leq j_1+j_2\leq k_0, \\[11pt]
\lim\limits_{y\rto +\infty} \tilde{u}^0(t,x,y) = 0, \\[11pt]
\tilde{u}^0|_{t=0} = \tilde{u}_0(x,y),\quad v^0|_{t=0} = v_0(x,y).
\end{array}\right.
\end{equation}

If $u_0^s$ satisfies $\|u_0^s-1\|_{L^2}+ |u_0^s|_{\infty} + \|u_0^s\|_{\dot{\mathcal{C}}_{\ell}^{2k_0+3}} + \|\frac{\partial_{yy}u_0^s}{\partial_y u_0^s}\|_{\mathcal{C}_{\ell}^{2k_0+2}}\leq C$ and
$\tilde{u}_0 = u_0 -u_0^s$ satisfies
\begin{equation}\label{Sect4_Data_Regularity_1}
\begin{array}{ll}
\|\tilde{u}_0\|_{\mathcal{A}_{\ell}^{2k_0+1}(\mathbb{R}_{+}^2,t=0)}
+ \|\frac{\partial_y \tilde{u}_0}{\partial_y u_0^s}\|_{\mathcal{A}_{\ell}^{2k_0+1}(\mathbb{R}_{+}^2,t=0)}
 \leq \e,
\end{array}
\end{equation}
for a small constant $\e>0$, then
\begin{equation}\label{Sect4_Data_Regularity_2}
\begin{array}{ll}
\|\tilde{u}^0\|_{\mathcal{A}_{\ell}^{k_0+1}([0,T]\times\mathbb{R}_{+}^2)}
+\|\frac{\partial_y \tilde{u}^0}{\partial_y u^s}\|_{\mathcal{A}_{\ell}^{k_0+1}([0,T]\times\mathbb{R}_{+}^2)}
+\|v^0\|_{\mathcal{D}_{0}^{k_0}([0,T]\times\mathbb{R}_{+}^2)} \\[12pt]\quad
+ \|f^a\|_{\mathcal{A}_{\ell}^{k_0}([0,T]\times\mathbb{R}_{+}^2)}
+ \|\frac{f^a}{\partial_y u^s}\|_{\mathcal{A}_{\ell}^{k_0}([0,T]\times\mathbb{R}_{+}^2)} \\[9pt]\quad
+ \sum\limits_{m=0}^{2k_0+2}\|\partial_y^{m}\tilde{u}^0|_{y=0}\|_{A^{k_0+1-[\frac{m+1}{2}]}} \leq C(T)\e.
\end{array}
\end{equation}
\end{proposition}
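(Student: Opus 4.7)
The proposition splits into two tasks: verifying that the Taylor polynomial $(\tilde u^0, v^0)$ satisfies the system (\ref{Sect4_Zeroth_Order_Sol_3}), and establishing the regularity estimates (\ref{Sect4_Data_Regularity_2}). Several items in (\ref{Sect4_Zeroth_Order_Sol_3}) are immediate from the construction. The governing equation is a tautology, since $(\ref{Sect4_Zeroth_Order_Sol_3})_1$ is precisely the definition of $f^a = \mathcal P(u^0, v^0)$ from (\ref{Sect4_Fa_Define}). The divergence-free identity $\tilde u^0_x + v^0_y = 0$ follows term by term from $\partial_t^j v_0 = -\int_0^y \partial_x \partial_t^j \tilde u_0\,\mathrm{d}\tilde y$, which also yields $\partial_t^{j_1}\partial_x^{j_2} v^0|_{y=0} = 0$. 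The limit $\tilde u^0 \rto 0$ as $y \rto +\infty$, together with the initial condition $\tilde u^0|_{t=0} = \tilde u_0$ and $v^0|_{t=0} = v_0$, are direct consequences of the analogous properties of each $\partial_t^j \tilde u_0$.

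The delicate structural point is the Robin compatibility $\partial_t^{j_1}\partial_x^{j_2}(\partial_y\tilde u^0 - \beta\tilde u^0)|_{y=0} = 0$. Because $\tilde u^0$ is polynomial of degree $k_0$ in $t$, this reduces to showing $(\partial_y \partial_t^{j_1}\tilde u_0 - \beta\, \partial_t^{j_1}\tilde u_0)|_{y=0} = 0$ and its $x$-derivatives, which I would establish by induction on $j_1$. The key observation is that on $\{y=0\}$, all terms in (\ref{Sect4_Zeroth_Order_Sol_2}) carrying a factor $\partial_t^m v_0$ drop out (since $\partial_t^m v_0|_{y=0} = 0$), so the recursion collapses to
\[
\partial_t^j \tilde u_0|_{y=0} = \partial_{yy}\partial_t^{j-1}\tilde u_0|_{y=0} - \sum_{m=0}^{j-1} C_{j-1}^m (\partial_t^m u_0^s + \partial_t^m \tilde u_0)\,\partial_x(\partial_t^{j-1-m}\tilde u_0)\big|_{y=0}.
\]
Since $\partial_y - \beta$ commutes with $\partial_{yy}$ and with $\partial_x$, and since the Leibniz rule applied to the nonlinear product preserves the Robin boundary structure once each factor satisfies it, the induction closes by invoking the hypotheses $\partial_y^{2j}(\partial_y \tilde u_0 - \beta\tilde u_0)|_{y=0} = 0$ and $\partial_y^{2j}(\partial_y u_0^s - \beta u_0^s)|_{y=0} = 0$ through the required range. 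A parallel argument handles the tangential $\partial_x^{j_2}$ derivatives by commutation with $\partial_y - \beta$.

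For the estimates (\ref{Sect4_Data_Regularity_2}), I would first prove by induction on $j$ the bound $\|\partial_t^j \tilde u_0\|_{\mathcal A_\ell^{2(k_0+1-j)+1}(\Omega, t=0)} \leq C\e$ using (\ref{Sect4_Zeroth_Order_Sol_2}): each application of $\partial_t$ costs two $y$-derivatives plus quadratic products of $\tilde u_0, u_0^s, v_0$ and their derivatives, each of which is controlled by Sobolev embeddings from $\|\tilde u_0\|_{\mathcal A_\ell^{2k_0+1}} \leq \e$ and $\|u_0^s\|_{\dot{\mathcal C}_\ell^{2k_0+3}} \leq C$. Summing the Taylor expansion and using the convention $k_1 + [(k_2+1)/2]$ in the norm $\mathcal A_\ell^{k_0+1}$ gives the claimed bound on $\tilde u^0$, and the bound on $v^0$ and on the boundary traces $\partial_y^m \tilde u^0|_{y=0}$ follows from $v^0 = -\int_0^y \partial_x \tilde u^0$ and the trace theorem. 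The weighted ratio $\partial_y \tilde u^0 / \partial_y u^s$ is handled by running the same induction on $\partial_y \partial_t^j \tilde u_0 / \partial_y u_0^s$: the Leibniz rule together with the estimates $\|u^s_{yy}/u^s_y\|_{\mathcal C_0^{k-1}} \leq C(T)$ from Theorem \ref{Sect2_Theorem} bounds all error terms, while the smallness $\|\partial_y \tilde u_0/\partial_y u_0^s\|_{\mathcal A_\ell^{2k_0+1}} \leq \e$ propagates with a factor $C(T)$. Finally, $f^a$ is the remainder when $(u^s + \tilde u^0, v^0)$ is substituted into $\mathcal P$; it consists of terms of order $t^{k_0+1}$ or higher arising from the truncation, and the same induction provides the estimates on $f^a$ and on $f^a/\partial_y u^s$.

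\emph{Main obstacle.} The hardest step is the Robin compatibility induction, where one must carefully track how many orders of the compatibility conditions on $\tilde u_0$ and $u_0^s$ are consumed at each layer of the recursion, ensuring that the nonlinear product terms do not break the $(\partial_y - \beta)$ structure on the boundary. A closely related difficulty is the control of the weighted norm $\|f^a/\partial_y u^s\|_{\mathcal A_\ell^{k_0}}$: the weight $1/\partial_y u^s$ can degenerate as $y \rto +\infty$, and one must exploit the smallness of $\partial_y \tilde u_0 / \partial_y u_0^s$ in tandem with the non-degeneracy of $\partial_y u^s$ provided by Theorem \ref{Sect2_Theorem} to close the estimate uniformly in $t \in [0,T]$.
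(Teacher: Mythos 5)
Your overall architecture matches the paper's, but the step you flag as ``the hardest'' --- the Robin compatibility induction --- is where your sketch actually breaks down, and the mechanism you describe for closing it is incorrect. The trouble is your claim that ``the Leibniz rule applied to the nonlinear product preserves the Robin boundary structure once each factor satisfies it.'' It does not: $(\partial_y-\beta)[fg]\big|_{y=0}=f\,(\partial_y-\beta)g\big|_{y=0}+(\partial_y f)\,g\big|_{y=0}$, and the second term has no $(\partial_y-\beta)$ structure. Moreover, you cannot work with the ``collapsed'' recursion obtained by setting $y=0$ first, because the quantity you actually need is $(\partial_y-\beta)\partial_t^j\tilde u_0\big|_{y=0}$; you must apply $\partial_y-\beta$ to the full recursion (\ref{Sect4_Zeroth_Order_Sol_2})$_1$ and only then restrict. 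Doing so, the $v_0\,\partial_y(\cdot)$ terms do \emph{not} disappear: $\partial_y$ hitting $v_0$ produces $\partial_y(\partial_t^m v_0)\cdot\partial_y(\partial_t^{j-1-m}u_0)\big|_{y=0}$, which is nonzero. Similarly, $\partial_y$ hitting the first factor of the $u_0\,\partial_x\tilde u_0$ terms produces $\partial_y(\partial_t^m u_0)\cdot\partial_x(\partial_t^{j-1-m}\tilde u_0)\big|_{y=0}$.

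The paper's computation (\ref{Sect4_Zeroth_Order_Sol_5}) shows that these two families of leftover Leibniz terms cancel exactly: substituting $\partial_y(\partial_t^m v_0)=-\partial_x(\partial_t^m\tilde u_0)$ from (\ref{Sect4_Zeroth_Order_Sol_2})$_2$ and then re-indexing $m\mapsto j-1-m$ using $C_{j-1}^m=C_{j-1}^{j-1-m}$ turns one sum into the negative of the other. This cancellation --- resting on the divergence-free relation and binomial symmetry --- is the actual substance of the proof, and it is absent from your argument; without it the induction does not close. The rest of your plan (tautological verification of $(\ref{Sect4_Zeroth_Order_Sol_3})_1$, the estimates via induction on $j$ using (\ref{Sect4_Zeroth_Order_Sol_2})) is on the right track, though your proposed bound $\|\partial_t^j\tilde u_0\|_{\mathcal A_\ell^{2(k_0+1-j)+1}}\leq C\e$ over-reads the hypothesis at $j=0$, where it would demand $\tilde u_0\in\mathcal A_\ell^{2k_0+3}$ rather than the $\mathcal A_\ell^{2k_0+1}$ actually assumed; the paper uses the more flexible $\|\partial_t^j\tilde u_0\|_{\mathcal A_\ell^{k}}\lesssim\|u_0^s\|_{\mathcal C_0^{2k_0+2}}\|\tilde u_0\|_{\mathcal A_\ell^{k+j}}$, which stays within the hypothesis.
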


\begin{proof}
As to $(\ref{Sect4_Zeroth_Order_Sol_3})$, we only need to prove $\partial_{t}^{j_1}\partial_{x}^{j_2}(\partial_y \tilde{u}^0 - \beta \tilde{u}^0)|_{y=0} = 0,\ 0\leq j_1+j_2\leq k_0$.

By the definition of $\partial_t \tilde{u}_0$, namely $(\ref{Sect4_Zeroth_Order_Sol_2})_1$ with $j=1$,
it is trivial to verify that $(\partial_y(\partial_t \tilde{u}_0) - \beta\partial_t \tilde{u}_0)|_{y=0} =\partial_{yy}(\partial_y \tilde{u}_0 -\beta \tilde{u}_0)|_{y=0} =0.$

Assume that $(\partial_y(\partial_t^i \tilde{u}_0) - \beta\partial_t^i \tilde{u}_0)|_{y=0} =\partial_y^{2i}(\partial_y \tilde{u}_0 -\beta \tilde{u}_0)|_{y=0}=0$ for $0\leq i\leq j-1,\ 0\leq j\leq 2k_0$. Note that $\partial_t^i v_0|_{y=0}=0,\ \partial_x(\partial_{y}\partial_t^i \tilde{u}_0 - \beta \partial_t^i \tilde{u}_0)|_{y=0} =0,\ 0\leq i\leq j-1$, then we have
\begin{equation}\label{Sect4_Zeroth_Order_Sol_5}
\begin{array}{ll}
(\partial_y(\partial_t^j \tilde{u}_0) - \beta \partial_t^j \tilde{u}_0)|_{y=0}
= \partial_{y}\partial_{yy} \partial_t^{j-1} \tilde{u}_0|_{y=0} - \beta \partial_{yy}\partial_t^{j-1} \tilde{u}_0|_{y=0} \\[5pt]\quad

- \sum\limits_{m=0}^{j-1} C_{j-1}^{m} (\partial_t^{m} u^s_0 + \partial_t^{m} \tilde{u}_0)\Big((\partial_t^{j-1-m} \tilde{u}_0)_{yx}
- \beta(\partial_t^{j-1-m} \tilde{u}_0)_x \Big)\Big|_{y=0} \\[8pt]\quad

- \sum\limits_{m=0}^{j-1} C_{j-1}^{m} (\partial_t^{m} v_0)\Big((\partial_t^{j-1-m} u^s_0 + \partial_t^{j-1-m} \tilde{u}_0)_{yy} \\[8pt]\quad

- \beta(\partial_t^{j-1-m} u^s_0 + \partial_t^{j-1-m} \tilde{u}_0)_y \Big)\Big|_{y=0} \\[8pt]\quad

- \Big(\sum\limits_{m=0}^{j-1} C_{j-1}^{m} (\partial_t^{m} u^s_0 + \partial_t^{m} \tilde{u}_0)_y(\partial_t^{j-1-m} \tilde{u}_0)_x  \\[8pt]\quad
+ \sum\limits_{m=0}^{j-1} C_{j-1}^{m} (\partial_t^{m} v_0)_y(\partial_t^{j-1-m} u^s_0 + \partial_t^{j-1-m} \tilde{u}_0)_y \Big)\Big|_{y=0}
\hspace{3.04cm} \\[10pt]
= \partial_y^{2j} (\partial_{y} \tilde{u}_0 -\beta \tilde{u}_0)|_{y=0} =0,
\end{array}
\end{equation}
where $C_{j-1}^m =C_{j-1}^{j-1-m} = \frac{(j-1)!}{m!(j-1-m)!}$.

Thus, $(\partial_y \tilde{u}^0 - \beta \tilde{u}^0)|_{y=0} = \sum\limits_{j=0}^{k_0}\frac{t^j}{j!}
\Big(\partial_y(\partial_t^j \tilde{u}_0) - \beta \partial_t^j \tilde{u}_0 \Big)\Big|_{y=0} =0$, then for $0\leq j_1 + j_2\leq k_0$,
\begin{equation}\label{Sect4_Zeroth_Order_Sol_6}
\begin{array}{ll}
\partial_{t}^{j_1}\partial_{x}^{j_2} (\partial_y \tilde{u}^0 - \beta \tilde{u}^0)|_{y=0}  \\[4pt]
= \sum\limits_{j=0}^{k_0}\sum\limits_{m=0}^{j_1} C_{j_1}^m \frac{\partial_t^{j_1-m} t^j}{j!} \partial_{x}^{j_2}\Big(\partial_y(\partial_t^{j+m} \tilde{u}_0) - \beta \partial_t^{j+m} \tilde{u}_0 \Big)\Big|_{y=0} =0.
\end{array}
\end{equation}

Next, we prove the estimate $(\ref{Sect4_Data_Regularity_2})$ under the assumption $(\ref{Sect4_Data_Regularity_1})$.

Consider the definition $(\ref{Sect4_Zeroth_Order_Sol_2})_1$, it is easy to prove that
\begin{equation}\label{Sect4_Zeroth_Order_Sol_7}
\begin{array}{ll}
\|\partial_t^j \tilde{u}_0\|_{\mathcal{A}_{\ell}^{k}(\mathbb{R}_{+}^2,t=0)} \lem \|u_0^s\|_{\mathcal{C}_0^{2k_0+2}}
\sum\limits_{m=0}^{j-1}\|\partial_t^m \tilde{u}_0\|_{\mathcal{A}_{\ell}^{k+1}(\mathbb{R}_{+}^2,t=0)} \\[11pt]\hspace{2.7cm}
\lem \|u^s_0\|_{\mathcal{C}_0^{2k_0+2}}\|\tilde{u}_0\|_{\mathcal{A}_{\ell}^{k+j}(\mathbb{R}_{+}^2,t=0)},
\end{array}
\end{equation}
where $0\leq j\leq k_0,\ 0\leq k\leq k_0+1$. Then
\begin{equation*}
\begin{array}{ll}
\|\tilde{u}^0\|_{\mathcal{A}_{\ell}^{k_0+1}([0,T]\times\mathbb{R}_{+}^2)}
\lem C(T)\sum\limits_{m=0}^{k_0}\|\partial_t^m \tilde{u}_0\|_{\mathcal{A}_{\ell}^{k_0+1}(\mathbb{R}_{+}^2,t=0)} \\[12pt]\hspace{3.04cm}
\lem C(T)\|u^s_0\|_{\mathcal{C}_0^{2k_0+2}}\|\tilde{u}_0\|_{\mathcal{A}_{\ell}^{2k_0+1}(\mathbb{R}_{+}^2,t=0)} \leq C(T)\e,
\end{array}
\end{equation*}
and $\|v^0\|_{\mathcal{D}_0^{k_0}} + \|f^a\|_{\mathcal{A}_{\ell}^{k_0}}
\lem \|\tilde{u}^0\|_{\mathcal{A}_{\ell}^{k_0+1}}
\lem C(T)\|\tilde{u}_0\|_{\mathcal{A}_{\ell}^{2k_0+1}(\mathbb{R}_{+}^2,t=0)}\leq C(T)\e$.

Similarly, $\|\frac{\partial_y \tilde{u}^0}{\partial_y u^s}\|_{\mathcal{A}_{\ell}^{k_0+1}}
\lem C(T)\|\frac{\partial_{ty} u^s}{\partial_y u^s}\|_{\mathcal{C}^{2k_0+1}}
\|\frac{\partial_y \tilde{u}_0}{\partial_y u^s}\|_{\mathcal{A}_{\ell}^{2k_0+1}(\mathbb{R}_{+}^2,t=0)}\leq C(T)\e$.
\begin{equation*}
\begin{array}{ll}
\Big\|\frac{\tilde{u}^0}{\partial_y u^s}\Big\|_{\mathcal{A}_{\ell}^{k}}
\leq \Big\|\frac{1}{\partial_y u^s} \int\limits_{\infty}^y \partial_y\tilde{u}^0(\tilde{y}) \,\mathrm{d}\tilde{y}\Big\|_{\mathcal{A}_{\ell}^{k}}
\leq \Big\|\int\limits_{\infty}^y \frac{\partial_y u^s(\tilde{y})}{\partial_y u^s(y)}\frac{\partial_y\tilde{u}^0(\tilde{y})}{\partial_y u^s(\tilde{y})} \,\mathrm{d}\tilde{y}\Big\|_{\mathcal{A}_{\ell}^{k}} \\[10pt]\hspace{1.55cm}
\leq C\Big(\big\|\frac{\partial_{yy}u^s}{\partial_y u^s}\big\|_{\mathcal{C}_0^{k+2}}\Big)
C_{\ell}\Big\|\frac{\partial_y\tilde{u}^0}{\partial_y u^s}\Big\|_{\mathcal{A}_{\ell}^{k}},
\end{array}
\end{equation*}
then $\|\frac{\tilde{u}^0}{\partial_y u^s}\|_{\mathcal{A}_{\ell}^{k_0+1}}\lem \|\frac{\partial_y\tilde{u}^0}{\partial_y u^s}\|_{\mathcal{A}_{\ell}^{k_0+1}}$ and then
\begin{equation*}
\begin{array}{ll}
\big\|\frac{f^a}{\partial_y u^s}\big\|_{\mathcal{A}_{\ell}^{k_0}} \lem \big(1+\big\|\frac{\partial_{ty} u^s}{\partial_y u^s}\big\|_{\mathcal{C}^{k_0+1}}\big)
\big\|\frac{\tilde{u}^0}{\partial_y u^s}\big\|_{\mathcal{A}_{\ell}^{k_0+1}}
+ \big\|\frac{\partial_{yy}u^s}{\partial_y u^s}\big\|_{\mathcal{C}_0^{k_0+1}}\big\|\frac{\partial_y\tilde{u}^0}{\partial_y u^s}\big\|_{\mathcal{A}_{\ell}^{k_0+1}}
\\[9pt]\hspace{1.7cm}
\lem \big\|\frac{\partial_y \tilde{u}_0}{\partial_y u^s}\big\|_{\mathcal{A}_{\ell}^{2k_0+1}(\mathbb{R}_{+}^2,t=0)} \leq C(T)\e.
\end{array}
\end{equation*}

Especially, we have the estimate on the boundary $\{y=0\}$:
\begin{equation}\label{Sect4_Zeroth_Order_Boundary}
\begin{array}{ll}
\sum\limits_{m=0}^{2k_0+2}\|\partial_y^m\tilde{u}^0|_{y=0}\|_{A^{k_0+1-[\frac{m+1}{2}]}([0,T]\times\mathbb{R})}  \\[12pt]
\lem \|\partial_y \tilde{u}^0\|_{\mathcal{A}_{\ell}^{k_0+1}}
\lem \|\partial_y u^s\|_{\mathcal{C}_0^{k_0+2}}\|\frac{\partial_y \tilde{u}^0}{\partial_y u^s}\|_{\mathcal{A}_{\ell}^{k_0+1}} \leq C(T)\e.
\end{array}
\end{equation}
\end{proof}

\vspace{-0.4cm}
\begin{remark}\label{Sect4_Zeroth_Order_Sol_Remark}
As $\beta\rto +\infty$, $\partial_t^j u^s|_{y=0}=O(\frac{1}{\beta}), 0\leq k\leq k_0, \|\tilde{u}^0|_{y=0}\|_{A^{k_0}([0,T]\times\mathbb{R})} =O(\frac{1}{\beta})$, since we have the estimates:
\begin{equation*}
\begin{array}{ll}
\|\tilde{u}^0|_{y=0}\|_{A^{k_0}([0,T]\times\mathbb{R})}\leq C\beta^{-1}, \\[10pt]
\|\partial_y \tilde{u}^0|_{y=0}\|_{A^{k_0}([0,T]\times\mathbb{R})}\leq C.
\end{array}
\end{equation*}
\end{remark}

\vspace{0.1cm}
\section{Existence of Classical Solutions to the Nonlinear Prandtl Equations with Robin Boundary Condition}
In this section, we prove the estimates for some variables and quantities arising in the process of the Nash-Moser-H$\ddot{o}$rmander iteration, based on which we prove the convergence of the iteration which implies the existence of classical solutions to the Prandtl system $(\ref{Sect1_PrandtlEq})$.

Define
\begin{equation}\label{Sect5_Increment_Eq_2}
\left\{\begin{array}{ll}
u_{\theta_n}^n = u^s + S_{\theta_n}^u(\tilde{u}^0 + \sum\limits_{0\leq j\leq n-1}\delta u^j), \\[14pt]
v_{\theta_n}^n =S_{\theta_n}^v(v^0 + \sum\limits_{0\leq j\leq n-1}\delta v^j),
\end{array}\right.
\end{equation}
then $u_{\theta_n}^n$ and $v_{\theta_n}^n$ satisfy
\begin{equation}\label{Sect5_Background_Condition}
\left\{\begin{array}{ll}
\partial_x u_{\theta_n}^n + \partial_y v_{\theta_n}^n =0, \\[12pt]
v_{\theta_n}^n|_{y=0}=0, \\[12pt]
\lim\limits_{y\rto +\infty} u_{\theta_n}^n(t,x,y)=1.
\end{array}\right.
\end{equation}
and it will be verified in Subsection 5.2 that
\begin{equation}\label{Sect5_Iteration_Monotonicity}
\begin{array}{ll}
u_{\theta_n}^n>0,\quad \partial_y u_{\theta_n}^n>0,\quad \beta -\frac{\partial_{yy}u_{\theta_n}^n}{\partial_y u_{\theta_n}^n}\geq \delta>0,
\quad y\geq 0,\ t\in[0,T].
\end{array}
\end{equation}

Note that the condition $(\ref{Sect5_Background_Condition})$ does not contain $(\partial_y u_{\theta_n}^n -\beta u_{\theta_n}^n)|_{y=0} =0$, which is unnecessary because $(\ref{Sect3_Background_Condition_1})$ does not need the condition $(\partial_y\tilde{u} -\beta\tilde{u})|_{y=0} =0$.
As pointed out in Remark \ref{Sect4_Operator_SU_Remark}, $[S_{\theta_n}^u(\partial_y \tilde{u}^0 -\beta \tilde{u}^0)]|_{y=0} =0$ and $[S_{\theta_n}^u(\partial_y \delta u^j -\beta \delta u^j)]|_{y=0} =0$ are unnecessary to be satisfied, $(S_{\theta_n}^u \tilde{u}^0)|_{y=0}$ and $(S_{\theta_n}^u \delta u^j)|_{y=0}$ do not equal zero in general.

\vspace{0.5cm}
Moreover, $f^n$ is defined in $(\ref{Sect4_Increment_Eq_4})$, then the problem $(\ref{Sect4_Increment_Eq})$ is equivalent to the following system:
\begin{equation}\label{Sect5_Increment_Eq_1}
\left\{\begin{array}{ll}
(\delta u^n)_t + u_{\theta_n}^n (\delta u^n)_x + v_{\theta_n}^n (\delta u^n)_y
+ \delta u^n (u_{\theta_n}^n)_x + \delta v^n (u_{\theta_n}^n)_y - (\delta u^n)_{yy} = f^n, \\[13pt]
(\delta u^n)_x + (\delta v^n)_y =0, \\[13pt]
(\partial_y\delta u^n -\beta\delta u^n)|_{y=0} =0,\quad \delta v^n |_{y=0} =0,\\[13pt]
\lim\limits_{y\rto +\infty}\delta u^n =0, \\[13pt]
\delta u^n|_{t\leq 0} =0.
\end{array}\right.
\end{equation}

\noindent
Set
\begin{equation}\label{Sect5_New_Variables}
\begin{array}{ll}
w^n = \partial_y(\frac{\delta u^n}{\partial_y u_{\theta_n}^n}),\hspace{0.6cm}
\eta^n = \frac{\partial_{yy} u_{\theta_n}^n}{\partial_y u_{\theta_n}^n},\hspace{0.6cm}
\bar{\eta}^n = \frac{u_{yy}^s}{\partial_y u_{\theta_n}^n},\hspace{0.6cm}
\tilde{f}^n = \frac{f^n}{\partial_y u_{\theta_n}^n}, \\[17pt]

\zeta^n = \frac{(\partial_t + u_{\theta_n}^n\partial_x + v_{\theta_n}^n\partial_y -\partial_{yy})\partial_y u_{\theta_n}^n}{\partial_y u_{\theta_n}^n},
\hspace{0.67cm}

\tilde{\zeta}_1^n = \frac{\partial_{yyt} u^s}{\partial_y u_{\theta_n}^n}
- \bar{\eta}^n\frac{\partial_{yt} u^s}{\partial_y u_{\theta_n}^n}, \\[17pt]

\tilde{\zeta}_2^n = \frac{\partial_{yyt}(u_{\theta_n}^n -u^s)}{\partial_y u_{\theta_n}^n}
+ \frac{u_{\theta_n}^n\partial_{yyx}u_{\theta_n}^n}{\partial_y u_{\theta_n}^n}
- \eta^n\frac{\partial_{yt} u_{\theta_n}^n}{\partial_y u_{\theta_n}^n}
+ \bar{\eta}^n\frac{\partial_{yt} u^s}{\partial_y u_{\theta_n}^n}
- \eta^n \frac{u_{\theta_n}^n \partial_{yx} u_{\theta_n}^n}{\partial_y u_{\theta_n}^n},  \\[19pt]

\tilde{\zeta}^n := \tilde{\zeta}_1^n+\tilde{\zeta}_2^n = \frac{
\partial_{yyt}u_{\theta_n}^n + u_{\theta_n}^n\partial_{yyx}u_{\theta_n}^n
-\eta^n(\partial_{yt} u_{\theta_n}^n + u_{\theta_n}^n \partial_{yx} u_{\theta_n}^n)}
{\partial_y u_{\theta_n}^n},
\end{array}
\end{equation}

\begin{equation*}
\begin{array}{ll}
\lambda_{0,0}^n = \|u_{\theta_n}^n-u^s\|_{\mathcal{B}_{0,0}^{0,0}} + \|\partial_x u_{\theta_n}^n\|_{\mathcal{B}_{0,0}^{0,0}}
+ \|v_{\theta_n}^n\|_{L_y^{\infty}(L_{t,x}^2)} + \|\bar{\eta}^n\|_{L_y^2(L_{t,x}^{\infty})} \\[12pt]\hspace{1.1cm}
+ \|\eta^n-\bar{\eta}^n\|_{\mathcal{B}_{0,\ell}^{0,0}}
+ \|\zeta^n\|_{\mathcal{B}_{0,\ell}^{0,0}}\ ,
\\[18pt]

\lambda_{k_1,k_2}^n = \|u_{\theta_n}^n-u^s\|_{\mathcal{B}_{0,0}^{k_1,k_2}} + \|\partial_x u_{\theta_n}^n\|_{\mathcal{B}_{0,0}^{k_1,k_2}}
+ \sum\limits_{0\leq m\leq k_1,0\leq q\leq k_2}\|\partial_{t,x}^{m}\partial_y^{q} u^s\|_{L_{y}^2(L_t^{\infty})} \\[16pt]\hspace{1.4cm}
+ \sum\limits_{0\leq m\leq k_1,0\leq q\leq k_2}\|\partial_{t,x}^{m}\partial_y^{q} v_{\theta_n}^n\|_{L_y^{\infty}(L_{t,x}^2)}
+ \|\eta^n-\bar{\eta}^n\|_{\mathcal{B}_{0,\ell}^{k_1,k_2}} \\[16pt]\hspace{1.4cm}
+ \sum\limits_{0\leq m\leq k_1,0\leq q\leq k_2}\|\partial_{t,x}^{m}\partial_y^{q} \bar{\eta}^n\|_{L_y^2(L_{t,x}^{\infty})}
+ \|\zeta^n\|_{\mathcal{B}_{0,\ell}^{k_1,k_2}}\ ,\quad
k_1+[\frac{k_2+1}{2}]>0,
\\[19pt]

\lambda_{k_1}^n|_{\partial\Omega} = \big\|u_{\theta_n}^n|_{y=0}-u^s|_{y=0}\big\|_{A^{k_1}} + \big\|\partial_x u_{\theta_n}^n|_{y=0}\big\|_{A^{k_1}}
 + \sum\limits_{m=0}^{k_1}\big|\partial_t^{m}u^s|_{y=0}\big|_{\infty} \\[12pt]\hspace{1.5cm}
+ \sum\limits_{m=0}^{k_1}\big\|\partial_{t,x}^{m}\tilde{\zeta}_1^n|_{y=0}\big\|_{L_{t,x}^{\infty}}
+ \big\|\tilde{\zeta}_2^n|_{y=0}\big\|_{A^{k_1}}
+ \sum\limits_{m=0}^{k_1}\big\|\partial_{t,x}^{m}\bar{\eta}^n|_{y=0}\big\|_{L_{t,x}^{\infty}} \\[15pt]\hspace{1.5cm}
+ \big\|\eta^n|_{y=0}-\bar{\eta}^n|_{y=0}\big\|_{A^{k_1}}, \quad k_1\geq 0,
\\[19pt]

\lambda_k^n = \sum\limits_{0\leq k_1 + [\frac{k_2+1}{2}]\leq k} \lambda_{k_1,k_2}^n
+ \sum\limits_{0\leq k_1\leq k}\lambda_{k_1}^n|_{\partial\Omega}\ .
\end{array}
\end{equation*}

Then $w^n$ satisfies the following IBVP:
\begin{equation}\label{Sect5_VorticityEq}
\left\{\begin{array}{ll}
\partial_t w^n + (u_{\theta_n}^n w^n)_x + (v_{\theta_n}^n w^n)_y - 2(\eta^n w^n)_y
- \big(\zeta^n \int\limits_y^{+\infty} w^n(t,x,\tilde{y}) \,\mathrm{d}\tilde{y}\big)_y \\[9pt]\hspace{0.5cm}
- \partial_{yy}w^n = \partial_y\tilde{f}^n, \\[8pt]

\frac{\partial_t w^n}{\beta- \eta^n|_{y=0}} + \frac{\partial_x(u_{\theta_n}^n w^n)}{\beta- \eta^n|_{y=0}}
- \Big(\partial_y w^n + 2\eta^n|_{y=0} w^n + \tilde{f}^n + \zeta^n|_{y=0}\int\limits_0^{+\infty} w^n(t,x,\tilde{y}) \,\mathrm{d}\tilde{y}\Big) \\[10pt]\hspace{0.5cm}

+ \frac{w^n\tilde{\zeta}^n|_{y=0}}{(\beta- \eta^n|_{y=0})^2} =0,\quad y=0, \\[12pt]

w^n|_{t\leq 0} =0.
\end{array}\right.
\end{equation}

\subsection{Estimates of the Variables and Quantities for the Iteration}
In this subsection, we prove the estimates stated in the following theorem, $C_n(T)$ is written as $C_n$ for brevity.
\begin{theorem}\label{Sect5_Main_Thm}
Assume the reference index $\kk\geq 7$, $k_0\geq \kk+2$, $\dt\in (0,1)$ is small, if $u_0^s$ satisfies
$ \|u_0^s-1\|_{L^2} + |u_0^s|_{\infty} + \|u_0^s\|_{\dot{\mathcal{C}}_{\ell}^{2k_0+3}} + \|\frac{\partial_{yy}u_0^s}{\partial_y u_0^s}\|_{\mathcal{C}_{\ell}^{2k_0+2}}\leq C $
and $\tilde{u}_0 = u_0-u^s_0$ satisfies
$\|\tilde{u}_0\|_{\mathcal{A}_{\ell}^{2k_0+1}(\mathbb{R}_{+}^2,t=0)}
+ \|\frac{\partial_y \tilde{u}_0}{\partial_y u_0^s}\|_{\mathcal{A}_{\ell}^{2k_0+1}(\mathbb{R}_{+}^2,t=0)}
 \leq \e,$
then the following variables have estimates as follows:
\begin{equation}\label{Sect5_Variables_Estimate}
\begin{array}{ll}
\|f^n\|_{\mathcal{A}_{\ell}^k} \leq C_n\e \theta_n^{\max\{k-\kk,3-\kk\}} \triangle\theta_n,\hspace{3.46cm} 0\leq k\leq k_0,\\[9pt]
\|w^n\|_{\mathcal{A}_{\ell}^k} + \frac{1}{\sqrt{\beta + C_{\eta}}}\big\|w^n|_{y=0}\big\|_{A^k} \leq C_n\e \theta_n^{\max\{k-\kk,3-\kk\}} \triangle\theta_n,
  0\leq k\leq k_0, \\[10pt]

\|\delta u^n\|_{\mathcal{A}_{\ell}^k} \leq C_n\e \theta_n^{\max\{k-\kk,3-\kk\}} \triangle\theta_n,\hspace{3.32cm} 0\leq k\leq k_0, \\[9pt]

\big\|\partial_y\delta u^n\big\|_{\mathcal{A}_{\ell}^k}
+\big\|\frac{\partial_y\delta u^n}{\partial_y u^s}\big\|_{\mathcal{A}_{\ell}^k} \leq C_n\e \theta_n^{\max\{k-\kk,3-\kk\}}\triangle\theta_n,
\hspace{0.84cm}0\leq k\leq k_0, \\[12pt]

\sum\limits_{m=0}^{2k}\big\|\partial_y^m\delta u^n|_{y=0}\big\|_{A^{k-[\frac{m+1}{2}]}}
\leq C_n\e \theta_n^{\max\{k-\kk,3-\kk\}} \triangle\theta_n,\hspace{0.59cm} 0\leq k\leq k_0, \\[12pt]

\|\frac{\delta u^n}{\partial_y u^s}\|_{\mathcal{D}_0^k} \leq C_n\e \theta_n^{\max\{k-\kk,3-\kk\}}\triangle\theta_n,\hspace{3.2cm} 0\leq k\leq k_0, \\[11pt]

\|\delta v^n\|_{\mathcal{D}_{0}^{k}} \leq C_n\e \theta_n^{\max\{k+1-\kk,3-\kk\}} \triangle\theta_n,\hspace{2.98cm} 0\leq k\leq k_0-1, \\[11pt]

\|e_n^{(1)}\|_{\mathcal{A}_{\ell}^{k}} \leq C_n\e^2\theta_n^{\max\{k+3-2\kk,5-2\kk\}} \triangle\theta_n,\hspace{2.54cm} 0\leq k\leq k_0-1, \\[11pt]
\|e_n^{(2)}\|_{\mathcal{A}_{\ell}^{k}} \leq C_n\e^2\theta_n^{\max\{k+5-2\kk,7-2\kk\}} \triangle\theta_n,\hspace{2.54cm} 0\leq k\leq k_0-1.
\end{array}
\end{equation}

Moreover, the following quantities have estimates as follows:
\begin{equation}\label{Sect5_Quantities_Estimate}
\begin{array}{ll}
\|\frac{\partial_y u_{\theta_n}^{n}}{\partial_y u^s}\|_{L^{\infty}}
+ \|(\frac{\partial_y u_{\theta_n}^{n}}{\partial_y u^s})^{-1}\|_{L^{\infty}} \leq C, \\[9pt]

\|u_{\theta_n}^n -u^s\|_{\mathcal{A}_{\ell}^k}\leq C_n\e \theta_n^{\max\{k+1-\kk+\dt,0\}}, \hspace{3.55cm}0\leq k\leq 2k_0+2, \\[9pt]
\big\|u_{\theta_n}^n|_{y=0} -u^s|_{y=0}\big\|_{A^k}\leq C_n\e \theta_n^{\max\{k+1-\kk+\dt,0\}}, \hspace{2.28cm}0\leq k\leq 2k_0+2, \\[9pt]

\|v_{\theta_n}^n\|_{\mathcal{D}_0^{k}} \leq C_n\e \theta_n^{\max\{k+2-\kk+\dt,0\}},\hspace{4.35cm} 0\leq k\leq 2k_0+1, \\[9pt]

\|\frac{\partial_y (u_{\theta_n}^n -u^s)}{\partial_y u^s}\|_{\mathcal{A}_{\ell}^k}
 \leq C_n\e \theta_n^{\max\{k+1-\kk+\dt,0\}},\hspace{3.26cm} 0\leq k\leq 2k_0,\\[9pt]
\|\partial_y (u_{\theta_n}^n -u^s)\|_{\mathcal{A}_{\ell}^k}
 \leq C_n\e \theta_n^{\max\{k+1-\kk+\dt,0\}},\hspace{2.94cm} 0\leq k\leq 2k_0,\\[9pt]
 
\big\|\frac{\partial_y (u_{\theta_n}^n -u^s)}{\partial_y u^s}|_{y=0}\big\|_{A^k} \leq C_n\e \theta_n^{\max\{k+1-\kk+\dt,0\}},\hspace{2.6cm}
 0\leq k\leq 2k_0,\\[9pt]
\big\|\partial_y (u_{\theta_n}^n -u^s)|_{y=0}\big\|_{A^k} \leq C_n\e \theta_n^{\max\{k+1-\kk+\dt,0\}},\hspace{2.28cm}
 0\leq k\leq 2k_0,\\[9pt]

\|(\frac{\partial_y u_{\theta_n}^{n}}{\partial_y u^s})^{-1}\|_{\dot{\mathcal{A}}_{\ell}^k}
+ \big\|(\frac{\partial_y u_{\theta_n}^{n}}{\partial_y u^s})^{-1}|_{y=0}\big\|_{\dot{A}^k} \leq C_n\e \theta_n^{\max\{k+1-\kk+\dt,0\}},
1\leq k\leq 2k_0, \\[9pt]

\|\eta^n- \bar{\eta}^n\|_{\mathcal{A}_{\ell}^k} + \|(\eta^n- \bar{\eta}^n)|_{y=0}\|_{A^k}
\leq C_n\e\theta_n^{\max\{k+2-\kk+\dt,0\}},\hspace{0.41cm}  0\leq k\leq 2k_0-1, \\[9pt]

\|\bar{\eta}^n\|_{\mathcal{C}_{\ell}^k} \leq C_n + C_n\e \theta_n^{\max\{k+3-\kk+\dt,0\}},\hspace{3.66cm} 0\leq k\leq 2k_0-2, \\[9pt]
\|\partial_{t,x}^k\bar{\eta}^n|_{y=0}\|_{L_{t,x}^{\infty}} \leq C_n + C_n\e \theta_n^{\max\{k+3-\kk+\dt,0\}},\hspace{2.28cm} 0\leq k\leq 2k_0-2, \\[9pt]

\|\zeta^n\|_{\mathcal{A}_{\ell}^k} \leq C_n\e\theta_n^{\max\{k+3-\kk+\dt,0\}},\hspace{4.51cm} 0\leq k\leq 2k_0-2,
\end{array}
\end{equation}

\begin{equation*}
\begin{array}{ll}

\|\partial_{t,x}^{k}\tilde{\zeta}_1^n\|_{L_{t,x}^{\infty}} \leq C_n + C_n \e\theta_n^{\max\{k+3-\kk+\dt,0\}},\hspace{2.95cm} 0\leq k\leq 2k_0-2, \\[9pt]
\|\tilde{\zeta}_2^n\|_{A^k} \leq C_n\e\theta_n^{\max\{k+3-\kk+\dt,0\}},\hspace{4.57cm} 0\leq k\leq 2k_0-2, \\[9pt]

\lambda_k^n \leq C_n + C_n\e\theta_n^{\max\{k+3-\kk+\dt,0\}},\hspace{4.42cm} 0\leq k\leq 2k_0-2.
\end{array}
\end{equation*}
\end{theorem}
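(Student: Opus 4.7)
The plan is to prove Theorem \ref{Sect5_Main_Thm} by strong induction on $n$, bundling all the estimates (5.7)--(5.8) at each level into a single package. The base case $n=0$ follows from Proposition \ref{Sect4_Data_Proposition} combined with the smoothing bounds of Propositions \ref{Sect4_Nonlinear_Op_Proposition_1} and \ref{Sect4_Nonlinear_Op_Proposition_3}: this yields $\|f^a\|_{\mathcal{A}_\ell^{k_0}} \leq C(T)\e$, hence $\|f^0\|_{\mathcal{A}_\ell^k} = \|S_{\theta_0} f^a\|_{\mathcal{A}_\ell^k}$ has the claimed size, while the monotonicity of $u_{\theta_0}^0$ comes from the shear flow properties of Theorem \ref{Sect2_Theorem} perturbed by a small $S_{\theta_0}^u$-correction. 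The inductive step at level $n$ then proceeds in order: (i) bound the background quantities $u_{\theta_n}^n, v_{\theta_n}^n, \eta^n, \bar\eta^n, \zeta^n, \tilde\zeta^n$ (and hence $\lambda_k^n$) and establish the monotonicity (5.5); (ii) control the forcing $f^n$ via its recursion (4.34); (iii) solve the IBVP (5.6) for $w^n$ by Theorem \ref{Sect3_Main_Estimate_Thm}; (iv) recover $\delta u^n,\delta v^n$ from $w^n$ via the representation (3.5) and incompressibility; (v) bound the errors $e_n^{(1)}, e_n^{(2)}$ from their explicit forms (4.29)--(4.30).

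For (i), writing $u_{\theta_n}^n - u^s = S_{\theta_n}^u\bigl(\tilde u^0 + \sum_{j<n}\delta u^j\bigr)$ and using Proposition \ref{Sect4_Nonlinear_Op_Proposition_3} with the inductive bounds $\|\delta u^j\|_{\mathcal{A}_\ell^k} \leq C_j\e\,\theta_j^{\max\{k-\kk,3-\kk\}}\triangle\theta_j$ gives, after dyadic summation, the factor $\theta_n^{\max\{k+1-\kk+\dt,\,0\}}$: the $\dt$ is inserted to absorb the logarithm that appears when the partial sum is borderline-divergent. Once the $\mathcal{C}^2$-smallness of $u_{\theta_n}^n - u^s$ is secured via the embedding $\mathcal{A}_\ell^s\hookrightarrow\mathcal{C}_\ell^{s-2}$, the monotonicity (5.5) and the lower bound $\beta-\eta^n|_{y=0}\geq\delta$ follow by a perturbation argument from (2.4), and $|\eta^n|_{\infty}\leq C_\eta$ likewise; the remaining coefficient estimates reduce to products and compositions handled by Propositions \ref{Sect4_Nonlinear_Op_Proposition_1} and \ref{Sect4_Nonlinear_Op_Proposition_3}, so that $\lambda_k^n$ is finite and the bound in (5.8) is obtained. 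For (ii), the recursion (4.34) together with Proposition \ref{Sect4_Nonlinear_Op_Proposition_1} and the inductive $\e^2$-bounds on $e_j$ produces the stated bound on $\|f^n\|_{\mathcal{A}_\ell^k}$, which Theorem \ref{Sect3_Main_Estimate_Thm} lifts to the interior and boundary norms of $w^n$. For (iv), the representations $\delta u^n = -\partial_y u_{\theta_n}^n\int_y^{+\infty} w^n\,\mathrm{d}\tilde y$ and $\delta v^n = -\int_0^y(\delta u^n)_x\,\mathrm{d}\tilde y$, together with the already-obtained control on $\partial_y u_{\theta_n}^n/\partial_y u^s$, yield the $\delta u^n$, $\delta v^n$, $\delta u^n/\partial_y u^s$ estimates as well as their boundary traces.

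For (v), $e_n^{(1)}$ is quadratic in $(\delta u^n,\delta v^n)$, so the weighted Sobolev product rule furnishes the $\e^2$ factor and the exponent $\max\{k+3-2\kk,\,5-2\kk\}$, while $e_n^{(2)}$ carries an additional $(1-S_{\theta_n}^u)$ or $(1-S_{\theta_n}^v)$ factor which by Proposition \ref{Sect4_Nonlinear_Op_Proposition_3} gains two orders in $\theta_n$, shifting the exponent to $\max\{k+5-2\kk,\,7-2\kk\}$. The main obstacle is the bookkeeping of exponents so that the induction actually closes: one has to choose $\kk\geq 7$ and $k_0\geq\kk+2$ so that the dyadic sums $\sum_j\theta_j^{\alpha}\triangle\theta_j$, which are of order $\theta_n^{\alpha+1}$ for $\alpha>-1$ and of order $\log\theta_n\lesssim\theta_n^{\dt}$ at the threshold, feed back consistently into the bound for $f^{n+1}$ and cyclically throughout (i)--(v). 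In particular, the $\e^2$-gain in $e_n^{(1)}, e_n^{(2)}$ must strictly dominate the $\theta_n$-losses coming from the nonlinearity in order that the tail $\sum_{j\geq 0}\|e_j\|_{\mathcal{A}_\ell^{\kk}}$ be summable, which is precisely why the reference index $\kk$ must be taken above a definite threshold and $k_0$ strictly larger. The Robin boundary terms add extra coupled traces but present no fundamentally new difficulty since Theorem \ref{Sect3_Main_Estimate_Thm} already couples interior and boundary norms through the factor $\tfrac{1}{\sqrt{\beta+C_\eta}}$.
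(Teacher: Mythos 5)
Your proposal follows essentially the same strategy as the paper: a strong induction on $n$ that, at each level, bounds the smoothed background quantities $u_{\theta_n}^n$, $v_{\theta_n}^n$, $\eta^n$, $\bar\eta^n$, $\zeta^n$, $\tilde\zeta^n$ and the modulus $\lambda_k^n$ from the previously controlled increments, then bounds $f^n$ from the recursion \eqref{Sect4_Increment_Eq_4}, feeds $\tilde f^n$ and $\lambda_k^n$ into Theorem~\ref{Sect3_Main_Estimate_Thm} to control $w^n$ and its trace, recovers $\delta u^n$, $\delta v^n$ from the representation $\delta u^n=-\partial_y u_{\theta_n}^n\int_y^\infty w^n\,\mathrm{d}\tilde y$ and incompressibility, and finally bounds $e_n^{(1)},e_n^{(2)}$ from \eqref{Sect4_Increment_en1}--\eqref{Sect4_Increment_en2}. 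You correctly identify the $\delta_\theta$-adjustment to absorb the borderline logarithm (cf.\ Remark~\ref{Sect5_Sum_Power_Remark}) and the role of $\kk\geq7$, $k_0\geq\kk+2$ in closing the loop, and you correctly note the coupling of interior and boundary norms via the weight $1/\sqrt{\beta+C_\eta}$.

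One phrase deserves a correction: you say the $(1-S_{\theta_n}^u)$ or $(1-S_{\theta_n}^v)$ factor in $e_n^{(2)}$ ``gains two orders in $\theta_n$.'' In fact the exponent for $e_n^{(2)}$ is \emph{larger} (worse) by two than that of $e_n^{(1)}$, and the mechanism is not that $(1-S_{\theta_n})$ produces a gain, but rather that $e_n^{(2)}$ contains the cumulative quantities $\tilde u^n$ and $v^n$ (whose $\mathcal{A}_\ell^{k_0}$- and $\mathcal{D}_0^{k_0-1}$-norms grow like $\e\theta_n^{k_0+1-\kk}$), whereas $e_n^{(1)}$ is quadratic in the individual increments $\delta u^n,\delta v^n$ (whose norms decay like $\e\theta_n^{\max\{k-\kk,3-\kk\}}\triangle\theta_n$). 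The factor $(1-S_{\theta_n})$ partially compensates this, and the net effect is the $+2$ shift in the exponent: terms such as $\|(1-S_{\theta_n}^u)\tilde u^n\|_{\mathcal{A}_\ell^{k+1}}\|\delta u^n\|_{\mathcal{A}_\ell^{2}}\lesssim \e\theta_n^{k+2-\kk}\cdot\e\theta_n^{3-\kk}\triangle\theta_n$ dominate, yielding $\e^2\theta_n^{k+5-2\kk}\triangle\theta_n$. A second minor point: the paper verifies the monotonicity \eqref{Sect5_Iteration_Monotonicity} not inside the proof of Theorem~\ref{Sect5_Main_Thm} but in the Convergence subsection (Theorem~\ref{Sect5_Convergence_Thm}); your decision to fold it into step~(i) of the induction is a legitimate alternative presentation and arguably cleaner, since it is exactly what allows Theorem~\ref{Sect3_Main_Estimate_Thm} to be invoked with uniform constants $\delta$ and $C_\eta$. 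Neither point affects the soundness of the overall argument.
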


The main variables estimated in $(\ref{Sect5_Variables_Estimate})$ have the relationships indicated by the  following diagram, where $A\rto B$ means that we can estimate $B$ after we have estimated $A$.
\begin{equation*}
\begin{array}{ll}
f^a, f^0\hspace{0.56cm} \rto w^0, \hspace{0.51cm} w^0|_{y=0}, \hspace{0.5cm} \rto \delta u^0,\hspace{0.46cm} \delta u^0|_{y=0},\hspace{0.53cm} \delta v^0\hspace{0.5cm} \rto e_0^{(1)}, \hspace{0.33cm} e_0^{(2)}\hspace{0.12cm}\rto \\[8pt]
\rto f^1\hspace{0.64cm} \rto w^1, \hspace{0.51cm} w^1|_{y=0}, \hspace{0.51cm} \rto \delta u^1,\hspace{0.46cm} \delta u^1|_{y=0},\hspace{0.53cm} \delta v^1\hspace{0.5cm} \rto e_1^{(1)}, \hspace{0.33cm} e_1^{(2)}\hspace{0.12cm}\rto \\[6pt]
\rto\cdots \\[6pt]
\rto f^{n-1}\hspace{0.2cm} \rto w^{n-1}, \ w^{n-1}|_{y=0}, \ \rto \delta u^{n-1},\ \delta u^{n-1}|_{y=0},\ \delta v^{n-1}\ \rto e_{n-1}^{(1)},
\ e_{n-1}^{(2)}\rto \\[9pt]
\rto f^n\hspace{0.56cm} \rto w^n, \hspace{0.46cm} w^n|_{y=0}, \hspace{0.47cm} \rto \delta u^n,\hspace{0.48cm} \delta u^n|_{y=0},\hspace{0.47cm} \delta v^n\hspace{0.49cm} \rto e_n^{(1)}, \hspace{0.33cm} e_n^{(2)}\hspace{0.12cm}\rto \\[6pt]
\rto\cdots
\end{array}
\end{equation*}

For the zero-th order variables, it is easy to get the following estimates:
\begin{equation}\label{Sect5_Main_LowerOrder_Estimates_1}
\begin{array}{ll}
\|\tilde{u}^0\|_{\mathcal{A}_{\ell}^{k}} + \big\|\frac{\partial_y \tilde{u}^0}{\partial_y u^s}\big\|_{\mathcal{A}_{\ell}^{k}}
\leq C_0\e, \hspace{3.73cm} 0\leq k\leq k_0+1, \\[8pt]
\sum\limits_{m=0}^{2k}\big\|\partial_y^m\tilde{u}^0|_{y=0}\big\|_{A^{k-[\frac{m+1}{2}]}} \leq C_0\e, \hspace{2.94cm} 0\leq k\leq k_0+1, \\[15pt]
\|v^0\|_{\mathcal{D}_{0}^{k}} \leq C_0\e, \hspace{5.67cm} 0\leq k\leq k_0, \\[10pt]
\|f^a\|_{\mathcal{A}_{\ell}^{k}} + \|f^0\|_{\mathcal{A}_{\ell}^k}
+ \|\frac{f^a}{\partial_y u^s}\|_{\mathcal{A}_{\ell}^{k}}
+ \|\frac{f^0}{\partial_y u^s}\|_{\mathcal{A}_{\ell}^k} \leq C_0\e,\quad 0\leq k\leq k_0, \\[13pt]
\|w^0\|_{\mathcal{A}_{\ell}^k} +\frac{1}{\sqrt{\beta +C_{\eta}}}\big\|w^0|_{y=0}\big\|_{A^k} \leq C_0\e, \hspace{2.25cm}  0\leq k\leq k_0, \\[13pt]
\|\delta u^0\|_{\mathcal{A}_{\ell}^k} + \big\|\frac{\partial_y \delta u^0}{\partial_y u^s}\big\|_{\mathcal{A}_{\ell}^{k}} 
+ \|\frac{\delta u^0}{\partial_y u^s}\|_{\mathcal{D}_0^k}\leq C_0\e, \hspace{1.58cm} 0\leq k\leq k_0, \\[10pt]

\sum\limits_{m=0}^{2k}\big\|\partial_y^m\delta u^0|_{y=0}\big\|_{A^{k-[\frac{m+1}{2}]}}  \leq C_0\e,\hspace{2.79cm} 0\leq k\leq k_0, \\[13pt]
\|\delta v^0\|_{\mathcal{D}_{0}^{k}} \leq C_0\e,\hspace{5.55cm} 0\leq k\leq k_0-1, \\[11pt]
\|e_0^{(1)}\|_{\mathcal{A}_{\ell}^{k}} + \|e_0^{(2)}\|_{\mathcal{A}_{\ell}^{k}} \leq C_0\e^2,\hspace{3.63cm} 0\leq k\leq k_0-1, \\[11pt]
\|\frac{e_0^{(1)}}{\partial_y u^s}\|_{\mathcal{A}_{\ell}^{k}} + \|\frac{e_0^{(2)}}{\partial_y u^s}\|_{\mathcal{A}_{\ell}^{k}} \leq C_0\e^2,\hspace{3.35cm} 0\leq k\leq k_0-1.
\end{array}
\end{equation}
Then the estimates in $(\ref{Sect5_Variables_Estimate})$ and $(\ref{Sect5_Quantities_Estimate})$ hold for $n=0$ by adjusting the constant $C_0$, since $1\ll\theta_0<+\infty,\ 1\ll\theta_1<+\infty$.

\begin{remark}\label{Sect5_Sum_Power_Remark}
For any small $\dt\in (0,1)$, $\dt^{-\frac{1}{\dt}}\leq \theta_0<+\infty$ and $0\leq k\leq 2k_0+2$, 
\begin{equation*}
\begin{array}{ll}
\sum\limits_{m=0}^{j-1} \theta_m^{k-\kk}\triangle\theta_m \lem
\left\{\begin{array}{ll}
\theta_j^{k+1-\kk} + \theta_0^{k+1-\kk}, \hspace{1.82cm} k\neq \kk-1, \\[5pt]
\log\theta_j + \log\theta_0 \lem \theta_j^{\dt} + \theta_0^{\dt},  \hspace{0.5cm} k=\kk-1.
\end{array}\right.
\end{array}
\end{equation*}
Then we have a uniform expression
\begin{equation}\label{Sect5_Sum_Power_Formula}
\begin{array}{ll}
\sum\limits_{m=0}^{j-1} \theta_m^{k-\kk}\triangle\theta_m \lem \theta_j^{\max\{k+1-\kk+\dt,0\}},
\end{array}
\end{equation}
instead of $\sum\limits_{m=0}^{j-1} \theta_m^{k-\kk}\triangle\theta_m \lem \theta_j^{\max\{k+1-\kk,0\}}$ (see \cite{Alexandre_Yang_2014}), because when $k=\kk-1$, $\sum\limits_{m=0}^{j-1} \theta_m^{-1}\triangle\theta_m
\geq \frac{1}{3}\sum\limits_{m=0}^{j-1} (\frac{1}{\theta_m})^2
=\frac{1}{3}\sum\limits_{m=0}^{j-1}\frac{1}{\theta_0^2 + m}$ is not uniformly bounded for any $j$.

Thus, in this paper, we can not use $C_j\e \theta_j^{\max\{k+1-\kk,0\}}$ to bound
$\|u_{\theta_j}^j -u^s\|_{\mathcal{A}_{\ell}^k}$ and $\|v_{\theta_j}^j\|_{\mathcal{D}_0^{k-1}}$, etc, when $k= \kk-1$.
Moreover, due to $(\ref{Sect5_Sum_Power_Formula})$, the stability results in Theorem $\ref{Sect1_Main_Thm}$ hold for $p\leq k-2$ instead of $p\leq k-1$.
\end{remark}

Assume the estimates $(\ref{Sect5_Variables_Estimate})$ and $(\ref{Sect5_Quantities_Estimate})$ hold for $n=1,2,\cdots,j-1$, then we prove the main estimates for $n=j$ in the following lemmas, the rest are easy.

\begin{lemma}\label{Sect5_Lemmas_Velocity}
Assume the conditions are the same with the conditions in Theorem $\ref{Sect5_Main_Thm}$
and the estimates $(\ref{Sect5_Quantities_Estimate})$ hold for $n=1,2,\cdots,j-1$, then
\begin{equation}\label{Sect5_Lemmas_Velocity_1}

\end{equation}

When $\kk+1\leq k\leq k_0$, \\[3pt]
$\|S_{\theta_j} \frac{e_{j-1}}{\partial_y u^s} \|_{\mathcal{A}_{\ell}^k}
\lem \theta_j^{k-\kk-1} \|\frac{e_{j-1}}{\partial_y u^s} \|_{\mathcal{A}_{\ell}^{\kk+1}}
\lem \theta_j^{k-\kk}\triangle\theta_j \big(3\|\frac{e_{j-1}}{\partial_y u^s} \|_{\mathcal{A}_{\ell}^{\kk+1}} \big)
\lem \e^2\theta_j^{k-\kk}\triangle\theta_j.$

When $0\leq k\leq \kk$, $\|S_{\theta_j} \frac{e_{j-1}}{\partial_y u^s} \|_{\mathcal{A}_{\ell}^k}
\leq \|\frac{e_{j-1}}{\partial_y u^s} \|_{\mathcal{A}_{\ell}^k}
\lem \e^2\theta_{j-1}^{\max\{k+5-2\kk,7-2\kk\}-1} \\[5pt]
\lem \e^2(\frac{\theta_{j}}{2})^{\max\{k+4-2\kk,6-2\kk\}}
\lem \e^2\theta_{j}^{\max\{k-\kk,3-\kk\}}\triangle\theta_j\big[ 3\theta_j^{5-\kk}2^{\min\{2\kk-k-4,2\kk-6\}}\big] \\[4pt]
\lem \e^2\theta_{j}^{\max\{k-\kk,3-\kk\}}\triangle\theta_j$,
where $\theta_j^{\kk-5}\geq 3\cdot 2^{\min\{2\kk-k-4,2\kk-6\}}$ when $\theta_j\gg 1$.

\vspace{0.2cm}
\noindent
Then for $0\leq k\leq k_0$, \vspace{-0.1cm}
\begin{equation}\label{Sect5_Lemmas_F_6}

\end{equation}
where $\kk\geq 7$.
\end{proof}

\subsection{Convergence of the Nash-Moser-H$\ddot{o}$rmander Iteration}
In this subsection, we prove the convergence of the Nash-Moser-H$\ddot{o}$rmander Iteration, which implies the existence of classical solutions to the Prandtl system $(\ref{Sect1_PrandtlEq})$. The existence theorem is stated as follows:
\begin{theorem}\label{Sect5_Convergence_Thm}
Assume for any $0<\delta_{\beta}\leq \beta< +\infty$, the conditions are the same with the conditions in Theorem $\ref{Sect5_Main_Thm}$ and $\e$ is suitably small, then there exists $T\in (0,+\infty)$ such that the Prandtl system $(\ref{Sect1_PrandtlEq})$ admits a classical solution $(u,v)$ satisfying the monotonicity conditions $(\ref{Sect1_Solution_Monotonicity})$ and
\begin{equation}\label{Sect5_Solution_Regularity}
\begin{array}{ll}
u-u^s \in \mathcal{A}_{\ell}^k([0,T]\times\mathbb{R}_{+}^2), \hspace{1.2cm}
\frac{\partial_y(u-u^s)}{\partial_y u^s} \in \mathcal{A}_{\ell}^k([0,T]\times\mathbb{R}_{+}^2), \\[12pt]

v \in \mathcal{D}_0^{k-1}([0,T]\times\mathbb{R}_{+}^2), \hspace{1.64cm}
\partial_y v,\ \partial_{yy} v \in \mathcal{A}_{\ell}^{k-1}([0,T]\times\mathbb{R}_{+}^2), \\[12pt]

\partial_y^{j} u|_{y=0}- \partial_y^{j}u^s|_{y=0} \in A^{k-[\frac{j+1}{2}]}([0,T]\times\mathbb{R}), \quad 0\leq j\leq 2k, \\[12pt]
\partial_y^{j+1} v|_{y=0} \in A^{k-1-[\frac{j+1}{2}]}([0,T]\times\mathbb{R}), \hspace{1.4cm} 0\leq j\leq 2k-2.
\end{array}
\end{equation}
where $k\leq \kk-2$.

As $\beta\rto +\infty$, $(u,v)$ satisfy $(\ref{Sect5_Solution_Regularity})$ uniformly.
When $\beta=+\infty$, $(\ref{Sect5_Solution_Regularity})$ holds.
\end{theorem}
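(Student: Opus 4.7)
The plan is to deduce convergence of the Nash--Moser--H\"ormander scheme directly from the quantitative bounds collected in Theorem \ref{Sect5_Main_Thm}. First, I would add up $u^{n+1}=u^0+\sum_{j=0}^{n}\delta u^j$ and $v^{n+1}=v^0+\sum_{j=0}^{n}\delta v^j$ and verify absolute summability in the norms of interest. For $0\le k\le \kk-2$, the estimates
\[
\|\delta u^j\|_{\mathcal{A}_\ell^k}+\Big\|\tfrac{\partial_y\delta u^j}{\partial_y u^s}\Big\|_{\mathcal{A}_\ell^k}+\|\delta v^j\|_{\mathcal{D}_0^{k-1}}+\sum_{m=0}^{2k}\big\|\partial_y^m\delta u^j|_{y=0}\big\|_{A^{k-[\frac{m+1}{2}]}}\lesssim \e\,\theta_j^{\max\{k-\kk,3-\kk\}}\triangle\theta_j
\]
combined with $\triangle\theta_j\sim\theta_j^{-1}$ and $\theta_j\sim\sqrt{j}$ give a geometric-type tail summable whenever $\kk\ge 5$; since the hypothesis requires $\kk\ge 7$, this is comfortable. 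Hence the limits $u:=u^0+\sum_{j\ge 0}\delta u^j$ and $v:=v^0+\sum_{j\ge 0}\delta v^j$ exist in the spaces listed in \eqref{Sect5_Solution_Regularity}, with $(u-u^s)\in\mathcal{A}_\ell^k$, $v\in\mathcal{D}_0^{k-1}$ and the associated trace and derivative spaces; by choosing $\e$ small and $T$ finite one keeps all constants $C_j(T)$ under control uniformly in $j$ for some $T=T(\e,u_0^s)>0$.

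Next, I would show $\mathcal{P}(u^{n+1},v^{n+1})\to 0$ in $\mathcal{A}_\ell^k$ using the identity \eqref{Sect4_Increment_Eq_5}, namely $\mathcal{P}(u^{n+1},v^{n+1})=e_n+(1-S_{\theta_n})(\sum_{j=0}^{n-1}e_j)+(1-S_{\theta_n})f^a$. The estimates $\|e_j^{(1)}\|_{\mathcal{A}_\ell^k}\lesssim\e^2\theta_j^{\max\{k+3-2\kk,5-2\kk\}}\triangle\theta_j$ and $\|e_j^{(2)}\|_{\mathcal{A}_\ell^k}\lesssim\e^2\theta_j^{\max\{k+5-2\kk,7-2\kk\}}\triangle\theta_j$ are summable in $j$ for $k\le 2\kk-7$; since $\kk\ge 7$, this covers $k\le\kk-2$ with room to spare. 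For the mollified term, Proposition \ref{Sect4_Nonlinear_Op_Proposition_1} gives $\|(1-S_{\theta_n})g\|_{\mathcal{A}_\ell^k}\lesssim\theta_n^{k-\alpha}\|g\|_{\mathcal{A}_\ell^\alpha}$, which, applied with $g=\sum_{j=0}^{n-1}e_j+f^a\in\mathcal{A}_\ell^{k_0-1}$ uniformly in $n$ and $\alpha>k$, forces $(1-S_{\theta_n})(\sum e_j+f^a)\to 0$ in $\mathcal{A}_\ell^k$ as $n\to\infty$. Therefore $\mathcal{P}(u,v)=0$ in $\mathcal{A}_\ell^k$, so in particular pointwise, and passing to the limit in the linear constraints $(\delta u^n)_x+(\delta v^n)_y=0$, $(\partial_y\delta u^n-\beta\delta u^n)|_{y=0}=0$, $\delta v^n|_{y=0}=0$, $\lim_{y\to+\infty}\delta u^n=0$, $\delta u^n|_{t\le 0}=0$, together with the corresponding data satisfied by $(u^0,v^0)$ established in Proposition \ref{Sect4_Data_Proposition}, yields all the boundary, far-field, and initial conditions in \eqref{Sect1_PrandtlEq}.

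To obtain the monotonicity \eqref{Sect1_Solution_Monotonicity}, I would argue that by Theorem \ref{Sect2_Theorem} the shear flow $u^s$ satisfies $u^s>0$, $\partial_yu^s>0$, $\beta-\frac{\partial_{yy}u^s}{\partial_yu^s}\ge\delta_s>0$ on $[0,T]\times\mathbb{R}_+$; then the uniform smallness $\|u-u^s\|_{\mathcal{A}_\ell^k}+\|\frac{\partial_y(u-u^s)}{\partial_yu^s}\|_{\mathcal{A}_\ell^k}\lesssim C(T)\e$ inherited from the summed bounds, combined with the Sobolev embeddings $\|\cdot\|_{\mathcal{C}_\ell^{k-2}}\lesssim\|\cdot\|_{\mathcal{A}_\ell^k}$, furnishes pointwise control of $u-u^s$, $\partial_y(u-u^s)$ and $\frac{\partial_{yy}(u-u^s)}{\partial_yu^s}$ of order $\e$, so choosing $\e$ small enough preserves the three strict inequalities with a constant $\delta=\delta(\delta_s,\delta_\beta)>0$. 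The case $\beta\to+\infty$ follows because every constant in the above estimates was tracked to depend only on $\beta\ge\delta_\beta$ through the factor $\frac{1}{\sqrt{\beta+C_\eta}}$ on boundary norms (Theorem \ref{Sect3_Main_Estimate_Thm}), so the bounds are uniform in $\beta$ and the Dirichlet limit $u|_{y=0}=v|_{y=0}=0$ is recovered from $u^s|_{y=0}=O(\beta^{-1})$ and $\|u-u^s\|_{A^k}|_{y=0}=O(\beta^{-1/2})$ (Remark \ref{Sect4_Zeroth_Order_Sol_Remark}).

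The principal obstacle will be the bookkeeping that guarantees the constants $C_j(T)$ generated along the induction in Section~5.1 do not accumulate uncontrollably in the summation over $j$: although each individual estimate in Theorem \ref{Sect5_Main_Thm} carries an abstract $C_j$, we must ensure the product/sum $\sum C_j\theta_j^{\max\{k-\kk,3-\kk\}}\triangle\theta_j$ still converges, which ultimately forces $T$ to be small (depending on $\e$ and $u_0^s$) and $\e$ to be chosen after $T$ is fixed. The second subtlety is the weak coupling between interior and boundary estimates through the factor $\frac{\sqrt{\beta+C_\eta}}{\max\{\sqrt{\beta-C_\eta},\sqrt{\delta}\}}$: verifying that this ratio stays bounded as $\beta\to+\infty$ (noting $C_\eta$ itself depends on the background through $\eta^n$) is needed to pass uniformly to the Dirichlet limit, and this is precisely where the hypothesis $\delta_\beta\le\beta$ and the strict monotonicity $\beta-\frac{\partial_{yy}u_0^s}{\partial_yu_0^s}\ge\delta_{s,0}>0$ are used together to keep $\delta$ bounded below independently of $\beta$.
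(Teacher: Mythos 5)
Your proposal follows essentially the same route as the paper: sum the increments, invoke the power-of-$\theta_n$ bounds to get absolute convergence in $\mathcal{A}_\ell^k$ for $k\le\kk-2$, pass to the limit in \eqref{Sect4_Increment_Eq_5} to see $\mathcal{P}(u,v)=0$, then check the linear constraints and monotonicity. The broad outline and the $\beta\to+\infty$ discussion match the paper's.

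However, there is a genuine gap: you never verify the \emph{iteration} monotonicity conditions \eqref{Sect5_Iteration_Monotonicity}, namely that $u_{\theta_n}^n>0$, $\partial_y u_{\theta_n}^n>0$, and $\beta-\frac{\partial_{yy}u_{\theta_n}^n}{\partial_y u_{\theta_n}^n}\ge\delta>0$ uniformly in $n$, $y$, $t\in[0,T]$. This is logically prior to everything you quote from Theorem~\ref{Sect5_Main_Thm}: those estimates (in particular Lemma~\ref{Sect5_Lemmas_W}) are obtained by applying the linear a priori estimate Theorem~\ref{Sect3_Main_Estimate_Thm} to the system \eqref{Sect5_VorticityEq}, and Theorem~\ref{Sect3_Main_Estimate_Thm} requires $\beta-\eta^n|_{y=0}\ge\delta>0$ and $|\eta^n|_\infty\le C_\eta$ with $\delta$ and $C_\eta$ independent of $n$. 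Moreover, the quantities $\eta^n$, $\zeta^n$, $w^n$ are all built from $\partial_y u_{\theta_n}^n$ in the denominator, so without the strict positivity they are not even well defined. You only establish monotonicity for the limit $(u,v)$, which comes too late in the argument. The paper opens its proof with precisely this verification, using the quantities $c_5,\ldots,c_8$ together with the operator norm of $S_\theta^u$ to get $\frac{\partial_y u_{\theta_n}^n}{\partial_y u^s}\ge 1-c_7\e\ge\frac12$ and $\beta-\eta^n\ge\delta$ uniformly in $n$, and this is exactly what pins down the choice of $T$ (for a given $\e$ and $\delta\in(0,\delta_s)$), not the converse order you suggest in your final paragraph. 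A secondary, smaller point: for the $(1-S_{\theta_n})$ term you take $g=\sum_{j\le n-1}e_j+f^a\in\mathcal{A}_\ell^{k_0-1}$ ``uniformly in $n$,'' but the bound $\|e_j\|_{\mathcal{A}_\ell^{k_0-1}}\lesssim \e^2\theta_j^{\max\{k_0+4-2\kk,7-2\kk\}}\triangle\theta_j$ need not be summable once $k_0\ge\kk+2$; the paper avoids this by measuring the partial sums only in $\mathcal{A}_\ell^{k+1}$ with $k+1\le\kk-1$, where the exponent is at most $4-\kk\le -3$ and the tail is summable.
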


\begin{proof}
By $(\ref{Sect5_Variables_Estimate})$ and $(\ref{Sect5_Main_LowerOrder_Estimates_1})$, there exist two positive constants $c_5,c_6$ which depend on $T$,
such that
$\big\|\frac{\partial_y\tilde{u}_0 + \sum\limits_{m=0}^{\infty}\partial_y \delta u^m}{\partial_y u^s}\big\|_{L^{\infty}}\leq c_5\e,\
\big\|\frac{\partial_{yy}\tilde{u}_0 + \sum\limits_{m=0}^{\infty}\partial_{yy} \delta u^m}{\partial_y u^s}\big\|_{L^{\infty}}\leq c_6\e$.

There exist two constant $c_7,c_8$, such that $c_7>c_5,\ c_8>c_6$ and
\begin{equation*}
\begin{array}{ll}
\big\|\frac{\partial_y (u_{\theta_n}^n -u^s)}{\partial_y u^s}\big\|_{L^{\infty}}
\leq \big(1+C(\|\frac{u_{yy}^s}{u_y^s}\|_{\mathcal{C}_0^2})\big)\|\varrho\|_{L^1}\big\|\frac{\partial_y (u^n -u^s)}{\partial_y u^s}\big\|_{L^{\infty}} \leq c_7\e,\\[5pt]
\big\|\frac{\partial_{yy} (u_{\theta_n}^n -u^s)}{\partial_y u^s}\big\|_{L^{\infty}}
\leq \big(1+C(\|\frac{u_{yy}^s}{u_y^s}\|_{\mathcal{C}_0^2})\big)\|\varrho\|_{L^1}\big\|\frac{\partial_{yy} (u^n -u^s)}{\partial_y u^s}\big\|_{L^{\infty}} \leq c_8\e
\end{array}
\end{equation*}

For $\forall\delta\in (0,\delta_s)$, there is $T\in (0,\infty)$ such that $\frac{1}{2c_7}\geq\e, \frac{\delta_s-\delta}{c_7(C_{\eta}+\delta) + c_8 + c_7\delta}\geq \e$. Then for any $y\geq 0,\ n\geq 0,\ t\in [0,T]$, $\frac{\partial_y u_{\theta_n}^n}{\partial_y u^s} =  1 + \frac{\partial_y (u_{\theta_n}^n -u^s)}{\partial_y u^s}
\geq 1 - c_7\e \geq \frac{1}{2}>0$.

$u_{\theta_n}^n>0$ is due to $u_{\theta_n}^n|_{y=0} =\frac{1}{\beta}\partial_y u_{\theta_n}^n|_{y=0} >0$
and $\partial_y u_{\theta_n}^n>0$.

Since $\eta^n = (\frac{\partial_y u_{\theta_n}^n}{\partial_y u^s})^{-1} \big[\frac{u_{yy}^s}{u_y^s} + \frac{\partial_{yy} (u_{\theta_n}^n -u^s)}{u_y^s}\big]$,
\begin{equation}\label{Sect5_Apriori_Condition_2}
\begin{array}{ll}
|\eta^n|_{\infty} \leq
(1-c_7\e)^{-1} \big(\frac{u_{yy}^s}{u_y^s} + c_8\e\big) \leq 2\big(\frac{u_{yy}^s}{u_y^s} + c_8\e\big)<+\infty,
\end{array}
\end{equation}
uniformly for any $n\geq 0$, then we let $C_{\eta} =\max\limits_{n\geq 0}\{|\eta^n|_{\infty}\}$.

When $\beta\geq C_{\eta}+\delta$, $\beta - \eta^n\geq\delta>0$. When $\beta< C_{\eta}+\delta$,
\begin{equation}\label{Sect5_Apriori_Condition_1}
\begin{array}{ll}
\beta - \eta^n
= (\frac{\partial_y u_{\theta_n}^n}{\partial_y u^s})^{-1} \bigg[
(\beta -\frac{u_{yy}^s}{u_y^s}) +\beta\frac{\partial_y (u_{\theta_n}^n -u^s)}{u_y^s}
- \frac{\partial_{yy} (u_{\theta_n}^n -u^s)}{u_y^s} \bigg] \\[13pt]\hspace{1.1cm}

\geq (1+c_7\e)^{-1}(\delta_s - (C_{\eta}+\delta) c_7\e - c_8\e)
\geq\delta>0.
\end{array}
\end{equation}
Thus, the monotonicity conditions $(\ref{Sect5_Iteration_Monotonicity})$ for the Nash-Moser-H$\ddot{o}$rmander iteration are satisfied in the time interval $[0,T]$.

The approximate solution is constructed as
\begin{equation}\label{Sect5_Convergence_1}
\left\{\begin{array}{ll}
u^{n+1} = u^s + \tilde{u}^0 + \sum\limits_{j=0}^n \delta u^j, \\[8pt]
v^{n+1} = v^0 + \sum\limits_{j=0}^n \delta v^j,
\end{array}\right.
\end{equation}
then $u^{n+1}|_{y=0} = u^s|_{y=0} + \tilde{u}^0|_{y=0} + \sum\limits_{j=0}^n \delta u^j|_{y=0}$.

\vspace{-0.2cm}
When $k\leq \kk -2$, we have $\sum\limits_{j=0}^{+\infty} C_j\e\theta_j^{\max\{k-\kk,3-\kk\}} \triangle\theta_j \leq C$, then for $\forall n\geq 1$,\vspace{-0.2cm}
\begin{equation*}
\begin{array}{ll}
\|u^n - u^s\|_{\mathcal{A}_{\ell}^k}
\leq \|\tilde{u}^0\|_{\mathcal{A}_{\ell}^k} + \sum\limits_{j=0}^{\infty} \|\delta u^j\|_{\mathcal{A}_{\ell}^k} \leq C, \\[9pt]

\|\frac{\partial_y (u^n -u^s)}{\partial_y u^s}\|_{\mathcal{A}_{\ell}^k} \leq \|\frac{\partial_y \tilde{u}^0}{\partial_y u^s}\|_{\mathcal{A}_{\ell}^k} + \sum\limits_{j=0}^{\infty} \|\frac{\partial_y \delta u^j}{\partial_y u^s}\|_{\mathcal{A}_{\ell}^k} \leq C, \\[14pt]

\big\|\partial_y^m u^n|_{y=0} - \partial_y^m u^s|_{y=0}\big\|_{A^{k-[\frac{m+1}{2}]}}  \\[4pt]
\leq \big\|\partial_y^m\tilde{u}^0|_{y=0}\big\|_{A^{k-[\frac{m+1}{2}]}}
+ \sum\limits_{j=0}^{\infty} \big\|\partial_y^m\delta u^j|_{y=0}\big\|_{A^{k-[\frac{m+1}{2}]}} \leq C.
\end{array}
\end{equation*}

\vspace{-0.3cm}
When $k\leq \kk -3$, $\|v^n\|_{\mathcal{D}_0^k}\leq \|\tilde{v}^0\|_{\mathcal{D}_0^k} + \sum\limits_{j=0}^{\infty} \|\delta v^j\|_{\mathcal{D}_0^k} \leq C.$

By estimating the sums of remainder terms, it is easy to check that
$\{u^n -u^s\}$ and $\{\frac{\partial_y (u^{n} -u^s)}{\partial_y u^s}\}$ are Cauchy sequences in $\mathcal{A}_{\ell}^{\kk-2}$,
$\{v^n\}$ is a Cauchy sequence in $\mathcal{D}_{0}^{\kk-3}$,
$\{\partial_y^m u^n|_{y=0} - \partial_y^m u^s|_{y=0}\}$ is a Cauchy sequence in $A^{\kk-2-[\frac{m+1}{2}]}$.

Due to the completeness of functional spaces $\mathcal{A}_{\ell}^k$, $A^k$ and $\mathcal{D}_0^k$, there exist $u,v$ such that
\begin{equation}\label{Sect5_Convergence_4}
\begin{array}{ll}
\lim\limits_{n\rto\infty} u^n =u\in u^s+ \mathcal{A}_{\ell}^{\kk-2}, \\[8pt]
\lim\limits_{n\rto\infty} \frac{\partial_y (u^{n} -u^s)}{\partial_y u^s} =\frac{\partial_y (u -u^s)}{\partial_y u^s}\in\mathcal{A}_{\ell}^{\kk-2}, \\[8pt]
\lim\limits_{n\rto\infty} v^n =v\in\mathcal{D}_{0}^{\kk-3}, \\[8pt]
\lim\limits_{n\rto\infty} \partial_y^m u^n|_{y=0} =\partial_y^m u|_{y=0} \in \partial_y^m u^s|_{y=0}+  A^{\kk-2-[\frac{m+1}{2}]}.
\end{array}
\end{equation}

The approximate solution $(u^{n+1},v^{n+1})$ satisfies
\begin{equation}\label{Sect5_Convergence_8}
\left\{\begin{array}{ll}
\mathcal{P}(u^{n+1},v^{n+1}) = e_n + (1 -S_{\theta_n})\big(\sum\limits_{j=0}^{n-1}e_j + f^a\big), \\[10pt]
\partial_x u^{n+1} + \partial_y v^{n+1} =0, \\[9pt]
(\partial_y u^{n+1} -\beta u^{n+1})|_{y=0} =0,\quad v^{n+1}|_{y=0} =0, \\[9pt]
u^{n+1}|_{t=0} = u_0(x,y).
\end{array}\right.
\end{equation}

When $k\leq \kk-1$,\hspace{0.12cm} 
$\sum\limits_{j=0}^{+\infty}\|e_j\|_{\mathcal{A}_{\ell}^{k+1}} \leq \sum\limits_{j=0}^{+\infty} C_j\e^2\theta_j^{\max\{k+6-2\kk,7-2\kk\}} \triangle\theta_j <+\infty,$ \\
$\|f^a\|_{\mathcal{A}_{\ell}^{k+1}} \leq \|f^a\|_{\mathcal{A}_{\ell}^{k_0}}\leq C_0\e$, then as $n\rto +\infty$, \\
\begin{equation*}
\begin{array}{ll}
\|(1-S_{\theta})\big(\sum\limits_{j=0}^{n-1} e_j + f^a \big)\|_{\mathcal{A}_{\ell}^k} \leq \theta_n^{-1}
(\|f^a\|_{\mathcal{A}_{\ell}^{k+1}} + \sum\limits_{j=0}^{\infty}\|e_j\|_{\mathcal{A}_{\ell}^{k+1}}) \rto 0.
\end{array}
\end{equation*}
Thus, $\lim\limits_{n\rto \infty}\|\mathcal{P}(u^{n},v^{n})\|_{\mathcal{A}_{\ell}^{\kk-1}}=\|\mathcal{P}(u,v)\|_{\mathcal{A}_{\ell}^{\kk-1}}=0$, then the limits $u,v$ satisfy $\mathcal{P}(u,v)=0$ pointwisely.

Due to the uniform convergence of the variables and their derivatives, the limits $u$ and $v$ satisfy
$u_x + v_y =0, \ (\partial_y u -\beta u)|_{y=0} =0,\ v|_{y=0}=0$.
For any $n\geq 0$, $u^n|_{t=0} \equiv u_0^s +\tilde{u}_0 =u_0$, then $u|_{t=0} =u_0^s +\tilde{u}_0 =u_0$.
Thus, $(u,v)$ is a classical solution to the Prandtl system $(\ref{Sect1_PrandtlEq})$.

As $\beta\rto +\infty$, $(\ref{Sect5_Solution_Regularity})$ holds uniformly, because the bounds of the estimates in $(\ref{Sect5_Variables_Estimate})$, $(\ref{Sect5_Quantities_Estimate})$ and $(\ref{Sect5_Main_LowerOrder_Estimates_1})$ are independent of $\beta$.
Thus, $(\ref{Sect5_Solution_Regularity})$ holds when $\beta =+\infty$.

The estimates of $v_y,v_{yy},\partial_y^{j+1}v|_{y=0}, 0\leq j\leq 2k-2$ come from $u_x + v_y=0$.

Finally, we verify that $u$ satisfies the monotonicity conditions $(\ref{Sect1_Solution_Monotonicity})$ in $[0,T]$.

$\frac{\partial_y u}{\partial_y u^s} =  1 + \frac{\partial_y\tilde{u}_0 + \sum\limits_{m=0}^{\infty}\partial_y \delta u^m}{\partial_y u^s}
\geq 1 - c_5\e > 1-c_7\e \geq\frac{1}{2}>0$.

$u>0$ is due to $u|_{y=0} =\frac{1}{\beta}\partial_y u|_{y=0} >0$
and $\partial_y u>0$.

When $\beta\geq C_{\eta}+\delta$, $\beta - \frac{\partial_{yy}u}{\partial_y u}\geq\delta >0$. When $\beta< C_{\eta}+\delta$,
\vspace{-0.3cm}
\begin{equation*}
\begin{array}{ll}
\beta - \frac{\partial_{yy}u}{\partial_y u}
= (\frac{\partial_y u}{\partial_y u^s})^{-1} \bigg[
(\beta -\frac{u_{yy}^s}{u_y^s}) +\beta\frac{\partial_y\tilde{u}_0 + \sum\limits_{m=0}^{\infty}\partial_y \delta u^m}{u_y^s}
- \frac{\partial_{yy}\tilde{u}_0 + \sum\limits_{m=0}^{\infty}\partial_{yy} \delta u^m}{u_y^s} \bigg] \\[10pt]\hspace{1.42cm}

\geq (1+c_5\e)^{-1}(\delta_s - ((C_{\eta}+\delta)) c_5\e - c_6\e) \\[6pt]\hspace{1.42cm}
\geq (1+c_7\e)^{-1}(\delta_s - ((C_{\eta}+\delta)) c_7\e - c_8\e)
\geq\delta.
\end{array}
\end{equation*}
Thus, Theorem $\ref{Sect5_Convergence_Thm}$ is proved.
\end{proof}

Let $k_0=\kk+2$, the Prandtl system $(\ref{Sect1_PrandtlEq})$ loses $k+9$ orders of regularity.
If the monotonicity conditions $(\ref{Sect1_Solution_Monotonicity})$ are violated at $t=T$, the monotonicity conditions $(\ref{Sect5_Iteration_Monotonicity})$ for the Nash-Moser-H$\ddot{o}$rmander iteration must have been violated at $t\leq T$ and for some $n$, thus classical solutions to $(\ref{Sect1_PrandtlEq})$ can not be extended beyond $T$.

\subsection{The Derivatives on the Boundary}
In this subsection, we study the behavior of the derivatives on the boundary as $\beta\rto +\infty$, and discuss the regularity of the derivatives on the boundary.

\begin{lemma}\label{Sect5_Lemmas_Boundary}
Assume the conditions are the same with the conditions in Theorem $\ref{Sect5_Main_Thm}$, then
\begin{equation}\label{Sect5_Lemmas_Boundary_1}
\begin{array}{ll}
\sqrt{\beta}\big\|\delta u^n|_{y=0}\big\|_{A^k}
\leq C_n\e \theta_n^{\max\{k-\kk,3-\kk\}} \triangle\theta_n,\hspace{2.16cm} 0\leq k\leq k_0,\\[7pt]

\frac{1}{\sqrt{\beta}}\big\|\partial_y\delta u^n|_{y=0}\big\|_{A^k}
\leq C_n\e \theta_n^{\max\{k-\kk,3-\kk\}} \triangle\theta_n,\hspace{1.82cm} 0\leq k\leq k_0.
\end{array}
\end{equation}
\end{lemma}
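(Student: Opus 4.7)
The plan is to deduce both estimates from a single identity on the boundary, namely the Robin condition $(\partial_y\delta u^n-\beta\delta u^n)|_{y=0}=0$, together with the relation between $\delta u^n|_{y=0}$ and $w^n|_{y=0}$ already derived in Section~3. First I would observe that the Robin condition gives
\[
\partial_y\delta u^n|_{y=0}=\beta\,\delta u^n|_{y=0},
\]
so that $\tfrac{1}{\sqrt{\beta}}\|\partial_y\delta u^n|_{y=0}\|_{A^k}=\sqrt{\beta}\,\|\delta u^n|_{y=0}\|_{A^k}$, and the two inequalities of the lemma are equivalent; it suffices to prove the first one.

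Next, specializing the identity $(\ref{Sect3_Linearized_Prandtl_BC_2})$ to the iteration coefficients $\tilde u=u^n_{\theta_n}$, $\eta=\eta^n$, $u=\delta u^n$, $w=w^n$, one has
\[
\delta u^n\big|_{y=0}=\Big(\frac{\partial_y u^n_{\theta_n}}{\beta-\eta^n}\Big)\Big|_{y=0}\, w^n\big|_{y=0}.
\]
Then I would apply a Moser-type product estimate in the boundary norm $A^k$, writing
\[
\sqrt{\beta}\,\|\delta u^n|_{y=0}\|_{A^k}\lesssim\sqrt{\beta}\,\Big\|\tfrac{\partial_y u^n_{\theta_n}}{\beta-\eta^n}\big|_{y=0}\Big\|_{L^\infty_{t,x}}\,\|w^n|_{y=0}\|_{A^k}+\sqrt{\beta}\,\Big\|\tfrac{\partial_y u^n_{\theta_n}}{\beta-\eta^n}\big|_{y=0}\Big\|_{\dot A^k}\,\|w^n|_{y=0}\|_{L^\infty_{t,x}},
\]
and bound each factor: the numerator $\partial_y u^n_{\theta_n}|_{y=0}$ is uniformly $L^\infty$-bounded by $|\partial_y u^s|_{y=0}|_\infty+\|\partial_y(u^n_{\theta_n}-u^s)|_{y=0}\|_{L^\infty_{t,x}}$, the latter being controlled by Lemma~\ref{Sect5_Lemmas_Velocity}; the denominator factor $\tfrac{1}{\beta-\eta^n}|_{y=0}$ is bounded uniformly by $\min\{\tfrac{1}{\delta},\tfrac{1}{\beta-C_\eta}\}$, and its higher tangential derivatives are handled via Fà di Bruno together with the estimate of $(\eta^n-\bar\eta^n)|_{y=0}$ from Lemma~\ref{Sect5_Lemmas_Eta} and the bound on $\bar\eta^n|_{y=0}$.

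The key book-keeping step is the $\beta$-balance: from Lemma~\ref{Sect5_Lemmas_W} one already has $\|w^n|_{y=0}\|_{A^k}\le\sqrt{\beta+C_\eta}\,C_n\varepsilon\theta_n^{\max\{k-\kk,3-\kk\}}\triangle\theta_n$, and the prefactor multiplying $\|w^n|_{y=0}\|_{A^k}$ then becomes at most of order $\tfrac{\sqrt{\beta}\sqrt{\beta+C_\eta}}{\beta-C_\eta}$ for large $\beta$ and of order $\tfrac{\sqrt{\beta+C_\eta}}{\delta}\cdot\sqrt{\beta}$ for $\beta$ close to $\delta_\beta$; in either regime this quantity is bounded uniformly on $\delta_\beta\le\beta<+\infty$, as already exploited in $(\ref{Sect3_Estimate2_12})$. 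The remaining term involving $\|w^n|_{y=0}\|_{L^\infty_{t,x}}$ is controlled by $\|w^n|_{y=0}\|_{A^{k_0-\lfloor k_0/2\rfloor}}$ together with the induction estimates on $\tfrac{\partial_y u^n_{\theta_n}}{\beta-\eta^n}|_{y=0}$, producing only subleading powers of $\theta_n$ in the spirit of the arithmetic in Lemma~\ref{Sect5_Lemmas_U}. Combining everything yields the desired upper bound $C_n\varepsilon\theta_n^{\max\{k-\kk,3-\kk\}}\triangle\theta_n$.

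The main obstacle is not any single estimate but rather making the $\beta$-dependence transparent and uniform: one must carefully track that every factor of $\beta$ produced by the Robin relation is absorbed either by $\tfrac{1}{\beta-\eta^n}$ or by $\tfrac{1}{\sqrt{\beta+C_\eta}}$ coming from Lemma~\ref{Sect5_Lemmas_W}, so that the constants remain bounded as $\beta\to+\infty$ and as $\beta\to\delta_\beta$. Once this balancing is done, the reduction to the already-proved boundary estimate of $w^n|_{y=0}$ closes the proof.
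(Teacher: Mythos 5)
Your proposal is correct and follows essentially the same route as the paper: reduce the two estimates to one via the Robin identity $\partial_y\delta u^n|_{y=0}=\beta\,\delta u^n|_{y=0}$, invoke the relation $\delta u^n|_{y=0}=\frac{\partial_y u_{\theta_n}^n}{\beta-\eta^n}\big|_{y=0}\,w^n|_{y=0}$ from $(\ref{Sect3_Linearized_Prandtl_BC_2})$, apply a product estimate in $A^k$, and absorb the factor $\sqrt{\beta}$ using $\frac{1}{\beta-\eta^n}\lesssim\frac{1}{\max\{\delta,\beta-C_\eta\}}$ together with the $\sqrt{\beta+C_\eta}$ weight carried by $\|w^n|_{y=0}\|_{A^k}$ from Lemma~\ref{Sect5_Lemmas_W}, exactly as the paper does. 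The only cosmetic slip is writing the denominator bound as $\min\{\frac{1}{\delta},\frac{1}{\beta-C_\eta}\}$, which should be read as $\frac{1}{\max\{\delta,\beta-C_\eta\}}$ to remain meaningful when $\beta\le C_\eta$.
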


\begin{proof}
Assume the estimates $(\ref{Sect5_Lemmas_Boundary_1})$ hold for $n=0,2,\cdots, j-1$, then we prove $(\ref{Sect5_Lemmas_Boundary_1})$ hold for $n=j$.

\noindent
Estimate of $(\ref{Sect5_Lemmas_Boundary_1})_1$: since $\delta u^j|_{y=0} = \frac{\partial_y u_{\theta_j}^j}{\beta - \eta^j}|_{y=0} w^j|_{y=0}$,
\vspace{-0.2cm}
\begin{equation}\label{Sect5_Lemmas_Boundary_2}
\begin{array}{ll}
\sqrt{\beta}\|\delta u^j|_{y=0}\|_{A^{k}} = \sqrt{\beta}\big\|\frac{\partial_y u_{\theta_j}^j}{\beta - \eta^j}|_{y=0} w^j|_{y=0}\big\|_{A^{k}}\\[8pt]

\lem \frac{\beta + C_{\eta}}{\max\{\delta,\beta-C_{\eta}\}}\|\partial_y u_{\theta_j}^j\|_{L^{\infty}}
\frac{\big\|w^j|_{y=0}\big\|_{A^{k}}}{\sqrt{\beta + C_{\eta}}} \\[8pt]\quad
+ \frac{\beta + C_{\eta}}{\max\{\delta,\beta-C_{\eta}\}}\frac{\big\|w^j|_{y=0}\big\|_{L_{t,x}^{\infty}}}{\sqrt{\beta + C_{\eta}}}
(\big\|\partial_y u_{\theta_j}^j|_{y=0} -\partial_y u^s|_{y=0}\big\|_{A^k}
+ \big\|\partial_{t,x}^k \partial_y u^s|_{y=0}\big\|_{L_{t,x}^{\infty}}) \\[10pt]\quad

+ \frac{\beta + C_{\eta}}{\max\{\delta,\beta-C_{\eta}\}}\frac{\big\|w^j|_{y=0}\big\|_{L_{t,x}^{\infty}}}{\sqrt{\beta + C_{\eta}}}
(\big\|\eta^j|_{y=0} -\bar{\eta}^j|_{y=0}\big\|_{A^k} + \big\|\partial_{t,x}^k\bar{\eta}^j|_{y=0}\big\|_{L_{t,x}^{\infty}}) \\[12pt]

\lem C_j\e \theta_j^{\max\{k-\kk,3-\kk\}} \triangle\theta_j
+ C_j\e \theta_j^{3-\kk} \triangle\theta_j(C_s + C_j\e\theta_j^{\max\{k+1-\kk+\dt,0\}}) \\[8pt]\quad
+ C_j\e \theta_j^{3-\kk} \triangle\theta_j(C_j\e\theta_j^{\max\{k+2-\kk+\dt,0\}} + C_j + C_j\e\theta_j^{\max\{k+3-\kk+\dt,0\}})
\\[8pt]

\lem C_j\e \theta_j^{\max\{k-\kk,3-\kk\}} \triangle\theta_j, \quad 0\leq k\leq k_0.
\end{array}
\end{equation}
Note that $\frac{\beta + C_{\eta}}{\max\{\delta,\beta-C_{\eta}\}}<+\infty$ for $0<\delta_{\beta}\leq \beta\leq +\infty$.

\vspace{0.2cm}
\noindent
Estimate of $(\ref{Sect5_Lemmas_Boundary_1})_2$: since $\partial_{t,x}^{k}(\partial_y\delta u^j-\beta\delta u^j)|_{y=0}=0$, \\[3pt]
$\frac{1}{\sqrt{\beta}}\big\|\partial_y\delta u^j|_{y=0}\big\|_{A^{k}} = \sqrt{\beta}\big\|\delta u^j|_{y=0}\big\|_{A^{k}} \leq C_j\e \theta_j^{\max\{k-\kk,3-\kk\}} \triangle\theta_j,
\quad 0\leq k\leq k_0.$
\end{proof}

\vspace{-0.3cm}
Based on the estimates $(\ref{Sect5_Lemmas_Boundary_1})$, we have the following results:
\begin{lemma}\label{Sect5_Lemmas_Rate}
For $0\leq k\leq \kk-2$, as $\beta\rto +\infty$,
\begin{equation}\label{Sect5_Lemmas_Rate_1}
\begin{array}{ll}
\partial_t^j u^s|_{y=0}=O(\frac{1}{\beta}), \quad  \partial_t^j\partial_y u^s|_{y=0}=O(1), \quad 0\leq j\leq k,  \\[7pt]
\big\|u|_{y=0} - u^s|_{y=0}\big\|_{A^k}=O(\frac{1}{\sqrt{\beta}}),\quad \big\|\partial_y u|_{y=0} - \partial_y u^s|_{y=0}\big\|_{A^k}=O(\sqrt{\beta}),
\end{array}
\end{equation}
\end{lemma}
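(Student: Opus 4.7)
The plan is to derive each conclusion from the Robin boundary condition $\partial_y\phi|_{y=0}=\beta\phi|_{y=0}$ (applied to $\phi=u^s$ and to $\phi=u$) together with bounds already established in Sections 2, 4, 5 that are uniform in $\beta$.

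For the shear flow, applying $\partial_t^j$ to $(\partial_y u^s-\beta u^s)|_{y=0}=0$ yields the identity $\partial_t^j\partial_y u^s|_{y=0}=\beta\,\partial_t^j u^s|_{y=0}$ for $0\leq j\leq k$, so it suffices to show that $|\partial_t^j\partial_y u^s|_{y=0}|_\infty$ is bounded by a constant independent of $\beta$. This is exactly what estimate (\ref{Sect2_Solution_Regularity}) of Theorem \ref{Sect2_Theorem} asserts: the quantity $\beta\sum_j|\partial_t^j u^s|_{y=0}|_\infty+\sum_j|\partial_t^j\partial_y u^s|_{y=0}|_\infty$ is bounded by a $\beta$-free $C(T)$. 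The only place $\beta$ enters the derivation is as the coefficient of the favorable-sign boundary term $-2\beta\int|e^{-\lambda\tilde t}\partial_t^j u^s|_{y=0}|^2\,d\tilde t$ arising in (\ref{Sect2_Energy_Estimate_1}), which is absorbed on the left, so the resulting estimate is in fact monotone in $\beta$. This yields both $\partial_t^j\partial_y u^s|_{y=0}=O(1)$ and $\partial_t^j u^s|_{y=0}=O(1/\beta)$.

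For the boundary estimates of the full solution, I would use the Nash--Moser decomposition $u-u^s=\tilde u^0+\sum_{n=0}^{\infty}\delta u^n$ to split
$$\big\|u|_{y=0}-u^s|_{y=0}\big\|_{A^k}\leq\big\|\tilde u^0|_{y=0}\big\|_{A^k}+\sum_{n=0}^{\infty}\big\|\delta u^n|_{y=0}\big\|_{A^k}.$$
Remark \ref{Sect4_Zeroth_Order_Sol_Remark} gives $\|\tilde u^0|_{y=0}\|_{A^{k_0}}\leq C/\beta$, and Lemma \ref{Sect5_Lemmas_Boundary} gives $\sqrt\beta\,\|\delta u^n|_{y=0}\|_{A^k}\leq C_n\varepsilon\theta_n^{\max\{k-\kk,3-\kk\}}\triangle\theta_n$. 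For $k\leq\kk-2$ the exponent does not exceed $-2$, so the series is summable by Remark \ref{Sect5_Sum_Power_Remark} with a bound independent of $\beta$, giving $\|u|_{y=0}-u^s|_{y=0}\|_{A^k}=O(1/\sqrt\beta)$ (the $\tilde u^0$ contribution being of the smaller order $1/\beta$). Applying the Robin condition to both $u$ and $u^s$ then gives $\partial_y u|_{y=0}-\partial_y u^s|_{y=0}=\beta\bigl(u|_{y=0}-u^s|_{y=0}\bigr)$ in $A^k$, so the last estimate follows by multiplying the previous one by $\beta$.

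The main obstacle is the $\beta$-uniformity claimed in the first step: one has to verify that the constants in (\ref{Sect2_Energy_Estimate_2}) and (\ref{Sect2_Energy_Estimate_4}) do not degenerate as $\beta\to+\infty$, which is secured by exploiting the favorable sign of the boundary term rather than passing through the $\tfrac{1}{\beta}$-weighted estimates for $w^2=\partial_y u^s$ in (\ref{Sect2_W2_2})--(\ref{Sect2_W2_3}); everything else is an application of already-established bounds together with the two Robin relations.
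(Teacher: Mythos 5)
Your proof is correct and follows essentially the same route as the paper: the shear-flow claims are exactly what Theorem \ref{Sect2_Theorem}'s estimate (\ref{Sect2_Solution_Regularity}) already asserts (with the $O(1/\beta)$ statement being the $O(1)$ statement for $\partial_y u^s|_{y=0}$ fed through the Robin relation), while the $A^k$-boundary bounds are obtained by splitting $u-u^s=\tilde u^0+\sum_n\delta u^n$, invoking Remark \ref{Sect4_Zeroth_Order_Sol_Remark} for the $\tilde u^0$ piece and Lemma \ref{Sect5_Lemmas_Boundary} for the $\delta u^n$ pieces, and summing the resulting convergent series for $k\le\kk-2$. The paper's write-up treats $\sqrt{\beta}\|\tilde u^0|_{y=0}\|_{A^k}\le\frac1{\sqrt{\delta_\beta}}\|\partial_y\tilde u^0|_{y=0}\|_{A^k}$ directly via the Robin identity rather than citing Remark \ref{Sect4_Zeroth_Order_Sol_Remark}, and it bounds $\frac1{\sqrt{\beta}}\|\partial_y\delta u^j|_{y=0}\|_{A^k}$ term-by-term instead of multiplying the finished $u$-bound by $\beta$, but these are cosmetic variants of the same argument — both reductions ultimately rest on $\partial_y\phi|_{y=0}=\beta\phi|_{y=0}$ for $\phi=\tilde u^0,\delta u^j$.
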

\begin{proof}
When $0\leq k\leq \kk-2$,
\begin{equation*}
\begin{array}{ll}
\sqrt{\beta}\big\|u|_{y=0} - u^s|_{y=0}\big\|_{A^k}
\leq \frac{1}{\sqrt{\delta_{\beta}}}\beta\big\|\tilde{u}^0|_{y=0}\big\|_{A^k}
+ \sum\limits_{j=0}^{\infty}\sqrt{\beta}\big\|\delta u^j|_{y=0}\big\|_{A^k} \\[7pt]

\lem \frac{1}{\sqrt{\delta_{\beta}}}\big\|\partial_y\tilde{u}^0|_{y=0}\big\|_{A^k}
+ \sum\limits_{j=0}^{\infty} C_j\e\theta_j^{\max\{k-\kk,3-\kk\}}
\leq C(T),
\end{array}
\end{equation*}
\vspace{-0.3cm}
and
\begin{equation*}
\begin{array}{ll}
\frac{1}{\sqrt{\beta}}\big\|\partial_y u|_{y=0} - \partial_y u^s|_{y=0}\big\|_{A^k}
\leq \frac{1}{\sqrt{\beta}}\big\|\partial_y\tilde{u}^0|_{y=0}\big\|_{A^k}
+ \sum\limits_{j=0}^{\infty}\frac{1}{\sqrt{\beta}}\big\|\partial_y\delta u^j|_{y=0}\big\|_{A^k} \\[7pt]

\lem \frac{1}{\sqrt{\delta_{\beta}}}\big\|\partial_y\tilde{u}^0|_{y=0}\big\|_{A^k}
+ \sum\limits_{j=0}^{\infty} C_j\e\theta_j^{\max\{k-\kk,3-\kk\}}
\leq C(T).
\end{array}
\end{equation*}

Obviously, $u|_{y=0} =O(\frac{1}{\sqrt{\beta}})\rto 0$, pointwisely, as $\beta\rto +\infty$.
\end{proof}

\begin{remark}\label{Sect5_Example_Remark}
In the $y$ direction, there is a similarity between the Prandtl system $(\ref{Sect1_PrandtlEq})$ and the heat equation $(\ref{Sect2_HeatEq})$: the solutions have lower regularities on the boundary than in the interior.

(i) In Theorem $\ref{Sect2_Theorem}$ for the heat equation $(\ref{Sect2_HeatEq})$, the initial data $u_0^s$ satisfies $\|u_0^s-1\|_{L^2} + \|u_0^s\|_{\dot{\mathcal{C}}_{\ell}^k} \leq C$, then \vspace{-0.2cm}
\begin{equation*}
\begin{array}{ll}
\|u^s-1\|_{L^2}^2 +
\sum\limits_{0\leq k_1+[\frac{k_2+1}{2}] \leq k}
 \int\limits_0^t \int\limits_0^{\infty} <y>^{2\ell}\big|\partial_t^{k_1}\partial_y^{k_2+1} u^s|_{y=0} \big|^2 \,\mathrm{d}y\,\mathrm{d}\tilde{t} \\[12pt]

+ \sum\limits_{j=1}^k\int\limits_0^t \big|\partial_t^{j} u^s|_{y=0} \big|^2 \,\mathrm{d}\tilde{t}
+ \sum\limits_{j=0}^{k-1}\int\limits_0^t\big|\partial_t^{j}\partial_y u^s|_{y=0}\big|^2 \,\mathrm{d}\tilde{t}
\leq C(T),
\end{array}
\end{equation*}
$\sum\limits_{j=0}^{k}\int\limits_0^t\big|\partial_t^{j}\partial_y u^s|_{y=0}\big|^2 \,\mathrm{d}\tilde{t}
\leq \beta\cdot C(T)<+\infty$ is due to the Robin boundary condition, it is not satisfied when $\beta\rto +\infty$.

If the initial data $u_0^s$ satisfies $\|u_0^s-1\|_{L^2} + \|u_0^s\|_{\dot{\mathcal{C}}_{\ell}^k}
+ \|\partial_y u_0^s\|_{\dot{\mathcal{C}}_{\ell}^k}\leq C$, then \vspace{-0.2cm}
\begin{equation*}
\begin{array}{ll}
\|u^s-1\|_{L^2}^2 +
\sum\limits_{1\leq k_1+[\frac{k_2+1}{2}] \leq k+1}
 \int\limits_0^t \int\limits_0^{\infty} <y>^{2\ell}\big|\partial_t^{k_1}\partial_y^{k_2} u^s|_{y=0} \big|^2 \,\mathrm{d}y\,\mathrm{d}\tilde{t} \\[12pt]

+ \sum\limits_{j=1}^k\int\limits_0^t \big|\partial_t^{j} u^s|_{y=0} \big|^2 \,\mathrm{d}\tilde{t}
+ \sum\limits_{j=0}^{k}\int\limits_0^t\big|\partial_t^{j}\partial_y u^s|_{y=0}\big|^2 \,\mathrm{d}\tilde{t}
\leq C(T).
\end{array}
\end{equation*}

(ii) In Theorem $\ref{Sect1_Main_Thm}$ for the Prandtl system $(\ref{Sect1_PrandtlEq})$, the regularities in the y-direction can be improved very slightly, though it can not in the t,x-directions.
The estimate $\|\partial_y w^n\|_{\mathcal{A}_{\ell}^k} \leq C_n\e\theta_n^{\max\{k-\kk,3-\kk\}}\triangle\theta_n$ is attainable from the refined estimates of mixed derivatives in Section 3, then we have the extra regularities:
\begin{equation*}
\begin{array}{ll}
\partial_{yy}(u-u^s) \in \mathcal{A}_{\ell}^k([0,T]\times\mathbb{R}_{+}^2), \qquad
\partial_{yyy} v \in \mathcal{A}_{\ell}^{k-1}([0,T]\times\mathbb{R}_{+}^2), \\[7pt]

\partial_y^{j+1} u|_{y=0}- \partial_y^{j+1}u^s|_{y=0} \in A^{k-[\frac{j+1}{2}]}([0,T]\times\mathbb{R}), \quad 0\leq j\leq 2k, \\[7pt]

\partial_y^{j+2} v|_{y=0} \in A^{k-1-[\frac{j+1}{2}]}([0,T]\times\mathbb{R}), \quad 0\leq j\leq 2k-2.
\end{array}
\end{equation*}
However, in the y-direction, the solutions still have lower regularities on the boundary than in the interior.
\end{remark}

\section{Uniqueness and Stability of Classical Solutions to the Nonlinear Prandtl Equations with Robin Boundary Condition}
In this section, we prove the uniqueness and stability of classical solutions to the Prandtl system  $(\ref{Sect1_PrandtlEq})$.

\begin{theorem}\label{Sect6_Unique_Stable_Thm}
The classical solution to $(\ref{Sect1_PrandtlEq})$ is unique and stable with respect to the initial data in the following sense: assume $(u^1,v^1)$ and $(u^2,v^2)$ be two classical solutions to the problem $(\ref{Sect1_PrandtlEq})$ with the initial data $u_0^1$ and $u_0^2$ respectively, where $u_0^1(x,y)$ and $u_0^2(x,y)$ satisfy the conditions in Theorem $\ref{Sect5_Main_Thm}$, then for any $0\leq\delta_{\beta}\leq \beta<+\infty$, there is a constant $C(T,\e,u_0^s)\in (0,+\infty)$ such that
\begin{equation}\label{Sect6_Unique_Stable}
\begin{array}{ll}
\|u^1-u^2\|_{\mathcal{A}_{\ell}^p([0,T]\times\mathbb{R}_{+}^2)}
+ \|\frac{\partial_y (u^1-u^2)}{\partial_y u^s}\|_{\mathcal{A}_{\ell}^p([0,T]\times\mathbb{R}_{+}^2)} \\[8pt]\quad
+\|v^1-v^2\|_{\mathcal{D}_0^{p-1}([0,T]\times\mathbb{R}_{+}^2)}
+ \sum\limits_{j=0}^{2p}\big\|\partial_y^j u^1|_{y=0} - \partial_y^j u^2|_{y=0} \big\|_{A^{p-[\frac{j+1}{2}]}([0,T]\times\mathbb{R})}
\\[14pt]

\leq C(T,\e,u_0^s)\Big\|\partial_y (\frac{u_0^1-u_0^2}{\partial_y(u_0^1 + u_0^2)})\Big\|_{\mathcal{A}_{\ell}^{p}(\mathbb{R}_{+}^2,t=0)} \\[13pt]\quad
+ \frac{C(T,\e,u_0^s)}{\max\{\sqrt{\beta -C_{\eta}},\sqrt{\delta}\}} \Big\|\partial_y (\frac{u_0^1-u_0^2}{\partial_y(u_0^1 + u_0^2)})|_{y=0}\Big\|_{A^p(\mathbb{R},t=0)} \ ,
\end{array}
\end{equation}
for all $p\leq \kk-4,\ \kk\geq 7$.
\end{theorem}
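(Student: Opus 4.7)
The plan is to reduce the stability claim to the linear theory of Case II in Section 3. First I would set
\[
u := u^1 - u^2, \qquad v := v^1 - v^2, \qquad \tilde{u} := \tfrac{1}{2}(u^1+u^2), \qquad \tilde{v} := \tfrac{1}{2}(v^1+v^2),
\]
and rewrite the difference of the two Prandtl systems via the symmetric splitting $u^i u^i_x = \tilde{u}u_x + u\tilde{u}_x$, and analogously for $vu_y$. This yields exactly the linearized Prandtl--type system $(\ref{Sect3_Linearized_Prandtl_ZeroForce})$ around the background $(\tilde{u},\tilde{v})$ with zero force, Robin boundary $(\partial_y u-\beta u)|_{y=0}=0$ (inherited from both $u^i$), $v|_{y=0}=0$, $\lim_{y\to+\infty}u=0$, and initial data $u|_{t=0}=u_0^1-u_0^2$. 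The time derivatives of this initial data at $t=0$ are reconstructed from spatial derivatives of $u_0^1,u_0^2$ by repeatedly differentiating the nonlinear Prandtl equations $(\ref{Sect1_PrandtlEq})$.

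Next I would verify the hypotheses of Theorem $\ref{Sect3_Main_Estimate_ZeroForce_Thm}$ for this background. The monotonicity $\tilde{u}>0,\ \partial_y\tilde{u}>0$ and $\beta-\partial_{yy}\tilde{u}/\partial_y\tilde{u}\geq \delta>0$ follow from $(\ref{Sect1_Solution_Monotonicity})$ for $u^1,u^2$ together with the convexity of $s\mapsto s^{-1}$ at the level of $1/\partial_y\tilde{u}$ (concretely, $\partial_y\tilde{u}\geq \min(\partial_y u^1,\partial_y u^2)>0$ and $|\partial_{yy}\tilde{u}|\leq \tfrac12(|\partial_{yy}u^1|+|\partial_{yy}u^2|)$, combined with Remark $\ref{Sect1_Remark}$(iii) giving $|\eta|_\infty\leq C_\eta$). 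Incompressibility $\tilde{u}_x+\tilde{v}_y=0$ and $\tilde{v}|_{y=0}=0$ are immediate from averaging. The constants $\lambda_p^{(\tilde{u},\tilde{v})}$ of $(\ref{Sect3_Notions})$ are controlled by Theorem $\ref{Sect5_Convergence_Thm}$ applied to each $u^i$, for all $p\leq \kk-2$. Thus Theorem $\ref{Sect3_Main_Estimate_ZeroForce_Thm}$ yields, for $w := \partial_y(u/\partial_y\tilde{u})$ and $p\leq \kk-4$,
\begin{equation*}
\|w\|_{\mathcal{A}_\ell^p}+\tfrac{1}{\sqrt{\beta+C_\eta}}\bigl\|w|_{y=0}\bigr\|_{A^p}
\leq C(T,\lambda_p)\Bigl(\|w_0\|_{\mathcal{A}_\ell^p(\Omega,t=0)}+\tfrac{1}{\max\{\sqrt{\beta-C_\eta},\sqrt{\delta}\}}\bigl\|w_0|_{y=0}\bigr\|_{A^p(\mathbb{R},t=0)}\Bigr),
\end{equation*}
where $w_0=\partial_y\bigl(2(u_0^1-u_0^2)/\partial_y(u_0^1+u_0^2)\bigr)$, absorbing the factor $2$ into the constant.

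Then I would convert the $w$-estimate back to estimates on $(u,v)$ and the boundary traces, following the same calculus used in Lemma $\ref{Sect5_Lemmas_U}$ and Lemma $\ref{Sect5_Lemmas_Boundary}$ for $\delta u^n$. Namely, from $u=-\partial_y\tilde{u}\int_y^{+\infty}w\,d\tilde{y}$ and $v=-\int_0^y u_x\,d\tilde{y}$, together with the already established bounds on $\partial_y\tilde{u}$, $\partial_{yy}\tilde{u}/\partial_y u^s$, $\tilde{u}-u^s$, one controls $\|u\|_{\mathcal{A}_\ell^p}$, $\|\partial_y u/\partial_y u^s\|_{\mathcal{A}_\ell^p}$, $\|v\|_{\mathcal{D}_0^{p-1}}$, and the traces $\|\partial_y^j u|_{y=0}\|_{A^{p-[(j+1)/2]}}$ by $\|w\|_{\mathcal{A}_\ell^p}+\frac{1}{\sqrt{\beta+C_\eta}}\|w|_{y=0}\|_{A^p}$, with constants depending only on $T,\e,u_0^s$. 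Combining these two chains of estimates yields $(\ref{Sect6_Unique_Stable})$, and uniqueness follows by specializing to $u_0^1=u_0^2$, so $w_0\equiv 0$ and hence $w\equiv 0$, $u\equiv 0$.

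The main obstacle I anticipate is the verification of the monotonicity condition $\beta-\partial_{yy}\tilde{u}/\partial_y\tilde{u}\geq\delta>0$ for the \emph{average} background $\tilde{u}$ rather than for either $u^i$ individually; strict convexity of the quotient under averaging is not automatic, so one needs to use the uniform smallness of $\partial_y(u^i-u^s)/\partial_y u^s$ provided by Theorem $\ref{Sect5_Convergence_Thm}$ (inherited from the smallness of $\e$) to obtain $\partial_y\tilde{u}/\partial_y u^s\geq 1/2$ and the analogue of the pointwise inequality $(\ref{Sect5_Apriori_Condition_1})$ uniformly in $t\in[0,T]$. A secondary technical point is the reconstruction of $\partial_t^j(u_0^1-u_0^2)$ at $t=0$: these are nonlinear polynomials in spatial derivatives of $u_0^1,u_0^2$, but their \emph{difference} is linear in $u_0^1-u_0^2$ modulo bounded coefficients, which is exactly what is needed to bound $\|w_0\|_{\mathcal{A}_\ell^p(\Omega,t=0)}$ and $\|w_0|_{y=0}\|_{A^p(\mathbb{R},t=0)}$ by the single quantity appearing on the right-hand side of $(\ref{Sect6_Unique_Stable})$.
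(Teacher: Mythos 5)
Your proposal matches the paper's own proof essentially line by line: set $u=u^1-u^2$, $\tilde{u}=\tfrac12(u^1+u^2)$ (and similarly for $v,\tilde{v}$), observe the difference satisfies the Case II system $(\ref{Sect3_Linearized_Prandtl_ZeroForce})$ around $(\tilde{u},\tilde{v})$ with $w_0=2\partial_y\big((u_0^1-u_0^2)/\partial_y(u_0^1+u_0^2)\big)$, invoke Theorem $\ref{Sect3_Main_Estimate_ZeroForce_Thm}$, and convert back to $(u,v)$ and boundary traces via $u=-\tilde{u}_y\int_y^{\infty}w\,\mathrm{d}\tilde{y}$ as in Lemma $\ref{Sect5_Lemmas_U}$; uniqueness follows by specializing $u_0^1=u_0^2$. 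One note on the point you flag as the main obstacle, namely verifying $\beta-\partial_{yy}\tilde{u}/\partial_y\tilde{u}\geq\delta$ for the averaged background: it is more elementary than you anticipate, since
\[
\frac{\partial_{yy}\tilde{u}}{\partial_y\tilde{u}}
=\frac{\partial_{yy}u^1+\partial_{yy}u^2}{\partial_y u^1+\partial_y u^2}
=\frac{\partial_y u^1}{\partial_y u^1+\partial_y u^2}\cdot\frac{\partial_{yy}u^1}{\partial_y u^1}
+\frac{\partial_y u^2}{\partial_y u^1+\partial_y u^2}\cdot\frac{\partial_{yy}u^2}{\partial_y u^2}
\]
is a convex combination, with strictly positive weights because $\partial_y u^i>0$, of the two ratios $\partial_{yy}u^i/\partial_y u^i$, each of which is $\leq\beta-\delta$ by $(\ref{Sect1_Solution_Monotonicity})$; so the bound transfers to $\tilde{u}$ pointwise with no appeal to the smallness of $\e$. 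The smallness enters only, as in the paper, through the finiteness of $\lambda_p(\tilde{u})$ via the regularity bounds of Theorem $\ref{Sect5_Convergence_Thm}$.
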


\begin{proof}
Denote
\begin{equation}\label{Sect6_Define_Variables}
\left\{\begin{array}{ll}
u=u^1-u^2,\hspace{0.5cm} \tilde{u}=\frac{u^1+u^2}{2}, \\[7pt]
v=v^1-v^2,\hspace{0.6cm} \tilde{v}=\frac{v^1+v^2}{2},
\end{array}\right.
\end{equation}

Since $u^1,u^2$ satisfy the monotonicity conditions $(\ref{Sect1_Solution_Monotonicity})$,
then $\tilde{u}$ satisfies $(\ref{Sect1_Solution_Monotonicity})$. Thus,
$(\tilde{u},\tilde{v})$ satisfies the following conditions:
\begin{equation}\label{Sect6_Background_Condition}
\left\{\begin{array}{ll}
\tilde{u}>0,\quad \tilde{u}_y>0,\quad \beta -\frac{\tilde{u}_{yy}}{\tilde{u}_y}\geq\delta >0,\quad \forall y\in [0,+\infty), \\[8pt]
\tilde{u}_x + \tilde{v}_y =0, \\[8pt]
(\tilde{u}_y -\beta\tilde{u})|_{y=0} =0,\quad \tilde{v}|_{y=0}=0, \\[8pt]
\lim\limits_{y\rto +\infty} \tilde{u}(t,x,y) =1.
\end{array}\right.
\end{equation}
Note that in $(\ref{Sect6_Background_Condition})$, $(\tilde{u}_y -\beta\tilde{u})|_{y=0} =0$ will not be used because $(\ref{Sect3_Background_Condition_2})$ does not need this condition.

$(u,v)$ satisfies the following IBVP:
\begin{equation}\label{Sect6_Difference_Eq}
\left\{\begin{array}{ll}
u_t + \tilde{u} u_x + \tilde{v} u_y + u \tilde{u}_x +  v \tilde{u}_y - u_{yy} = 0, \\[8pt]
u_x + v_y =0, \\[8pt]
(\partial_y u - \beta u)|_{y=0} =0,\quad  v|_{y=0} =0,\\[8pt]
 \lim\limits_{y\rto +\infty} u(t,x,y) =0, \\[8pt]
u|_{t\leq 0} = u_0^1 - u_0^2.
\end{array}\right.
\end{equation}

Set $w = (\frac{u}{\tilde{u}_y})_y $, then $w$ satisfies the following IBVP:
\begin{equation}\label{Sect6_Difference_VorticityEq}
\left\{\begin{array}{ll}
w_t + (\tilde{u}w)_x + (\tilde{v}w)_y - 2(\eta w)_y
- \big(\zeta \int\limits_y^{+\infty} w(t,x,\tilde{y}) \,\mathrm{d}\tilde{y} \big)_y - w_{yy} = 0, \\[12pt]

\frac{w_t}{\beta- \eta|_{y=0}} + \frac{(\tilde{u} w)_x}{\beta- \eta|_{y=0}}
- \Big(w_y + 2\eta|_{y=0} w + \tilde{f} + \zeta|_{y=0}\int\limits_0^{+\infty} w(t,x,\tilde{y}) \,\mathrm{d}\tilde{y} \Big) \\[10pt]\hspace{0.5cm}

+ \frac{w\tilde{\zeta}|_{y=0}}{(\beta- \eta|_{y=0})^2} =0,\quad y=0, \\[12pt]

w|_{t\leq 0} = w_0 := 2\partial_y(\frac{u_0^1-u_0^2}{\partial_y (u_0^1 + u_0^2)}) \ ,
\end{array}\right.
\end{equation}
where $\eta,\ \bar{\eta},\ \zeta,\ \tilde{\zeta}$ are defined as $(\ref{Sect3_New_Variables})$.

\vspace{0.2cm}
When $p\leq \kk -4$, we use $\lambda_{p}(\tilde{u})$ to denote $\lambda_{p}$ defined with respect to $\tilde{u}=\frac{u^1+u^2}{2}$, we can calculate directly that $\lambda_{p}(\tilde{u}) <+\infty$ by the regularities of $u^1,u^1|_{y=0},u^2,u^2|_{y=0},v^1,v^2$ and the definition of $\lambda_p$, namely $(\ref{Sect3_Notions})$. Then by Theorem $\ref{Sect3_Main_Estimate_ZeroForce_Thm}$, there exists a constant $C(T,\lambda_{p}(\tilde{u}))>0$, such that
\begin{equation}\label{Sect6_Apply_Thm}
\begin{array}{ll}
\|w\|_{\mathcal{A}_{\ell}^{p}}
\leq C(T,\lambda_{p}(\tilde{u}))
\Big(\|w|_{t=0}\|_{\mathcal{A}_{\ell}^{p}(\mathbb{R}_{+}^2,t=0)} \\[8pt]\hspace{3.5cm}
+ \frac{1}{\max\{\sqrt{\beta -C_{\eta}},\sqrt{\delta}\}} \big\|w|_{y=0,t=0}\big\|_{A^p(\mathbb{R},t=0)}\Big)\ .
\end{array}
\end{equation}

Similarly, by the estimates for
\begin{equation}\label{Sect6_Solve_UW}
\begin{array}{ll}
u^1 -u^2 =  -\partial_y\tilde{u}\int\limits_y^{\infty} w(t,x,\tilde{y}) \,\mathrm{d}\tilde{y}, \\[8pt]
\partial_y^m u^1|_{y=0} - \partial_y^m u^2|_{y=0}  
= -\sum\limits_{m_1+m_2=m}\partial_y^{m_1+1}\tilde{u}|_{y=0}\int\limits_0^{\infty} \partial_y^{m_2}w(t,x,\tilde{y}) \,\mathrm{d}\tilde{y},
\end{array}
\end{equation}
 we get
\begin{equation}\label{Sect6_Difference_VorticityEq_Estimate}
\begin{array}{ll}
\|u^1-u^2\|_{\mathcal{A}_{\ell}^p([0,T]\times\mathbb{R}_{+}^2)}
+\|v^1-v^2\|_{\mathcal{D}_0^{p-1}([0,T]\times\mathbb{R}_{+}^2)}
+ \|\frac{\partial_y (u^1-u^2)}{\partial_y u^s}\|_{\mathcal{A}_{\ell}^p([0,T]\times\mathbb{R}_{+}^2)} \\[6pt]\quad
+ \sum\limits_{j=0}^{2p}\big\|(\partial_y^j u^1- \partial_y^j u^2)|_{y=0}\|_{A^{p-[\frac{j+1}{2}]}([0,T]\times\mathbb{R})} \\[12pt]
\lem C(T,\lambda_{p}(\tilde{u}))\|w\|_{\mathcal{A}_{\ell}^{p}([0,T]\times\mathbb{R}_{+}^2)},
\end{array}
\end{equation}

Combining $(\ref{Sect6_Apply_Thm})$ and $(\ref{Sect6_Difference_VorticityEq_Estimate})$, we get $(\ref{Sect6_Unique_Stable})$, which implies the stability of
the nonlinear Prandtl equations with Robin boundary condition. Note that $C(T,\lambda_p(\tilde{u}))\leq C(T,\e,u^s)\leq C(T,\e,u_0^s)$.

\vspace{0.2cm}
Next, we prove the uniqueness of the nonlinear Prandtl equations with Robin boundary condition, assume that $u_0^1 = u_0^2$.

In $\mathbb{R}_{+}^2$, $u_0^1-u_0^2\equiv 0$ implies that $\partial_y(u_0^1-u_0^2)\equiv 0$, then
\begin{equation}\label{Sect6_Uniqueness_1}
\begin{array}{ll}
\partial_y (\frac{u_0^1-u_0^2}{\partial_y(u_0^1 + u_0^2)}) = (u_0^1-u_0^2)[\frac{1}{(u_0^1 + u_0^2)_y}]_y + \partial_y(u_0^1-u_0^2)\frac{1}{(u_0^1 + u_0^2)_y} \equiv 0.
\end{array}
\end{equation}

On the boundary $\{y=0\}$, $\partial_y(u_0^1-u_0^2) = \beta(u_0^1-u_0^2) \equiv 0$, then
\begin{equation}\label{Sect6_Uniqueness_2}
\begin{array}{ll}
\partial_y (\frac{u_0^1-u_0^2}{\partial_y(u_0^1 + u_0^2)})|_{y=0} = (u_0^1-u_0^2)|_{y=0}\Big([\frac{1}{(u_0^1 + u_0^2)_y}]_y + \beta\frac{1}{(u_0^1 + u_0^2)_y}\Big) =0.
\end{array}
\end{equation}

By $(\ref{Sect6_Uniqueness_1})$ and $(\ref{Sect6_Uniqueness_2})$, the right hand side of $(\ref{Sect6_Unique_Stable})$ equals zero if $u_0^1 = u_0^2$.

Thus, Theorem $\ref{Sect6_Unique_Stable_Thm}$ is proved.
\end{proof}

\begin{remark}\label{Sect6_Extra_Stability_Remark}
Due to the estimate of $\|\partial_y w^n\|_{\mathcal{A}_{\ell}^k}$, the stability results in the y-direction can be improved very slightly, 
though those in the t,x-directions can not be improved. Namely, we have the stability of $\|\partial_{yy}(u-u^s)\|_{\mathcal{A}_{\ell}^p([0,T]\times\mathbb{R}_{+}^2)}$ and
$\sum\limits_{j=0}^{2p}\|\partial_y^{j+1} u|_{y=0}- \partial_y^{j+1}u^s|_{y=0}\|_{A^{p-[\frac{j+1}{2}]}([0,T]\times\mathbb{R})}$.
\end{remark}

\bibliographystyle{siam}
\addcontentsline{toc}{section}{References}
\bibliography{FuzhouWu_PrandtlEq_RobinBC}

\begin{thebibliography}{10}

\bibitem{Alexandre_Yang_2014}
{\sc R.~Alexandre, Y.~Wang, C.~Xu, and T.~Yang}, {\em {Well-posedness of the
  Prandtl Equation in Sobolev Spaces}}, Journal of the American Mathematical
  Society,  (2014).

\bibitem{Caffarelli_E_1995}
{\sc L.~Caffarelli and W.~E}, {\em {Separation of steady boundary layers}},
  unpublished,  (1995).

\bibitem{Ding_Jiang_2014}
{\sc Y.~Ding and N.~Jiang}, {\em {On analytic solutions of the Prandtl
  equations with Robin boundary condition in half space}}, arXiv:1402.3158v1,
  (2014).

\bibitem{E_Engquist_1997}
{\sc W.~E and B.~Engquist}, {\em {Blow up of solutions of the unsteady
  Prandtl's equation}}, Comm. Pure Appl. Math., 50 (1997), pp.~1287--1293.

\bibitem{Varet_Dormy_2010}
{\sc D.~G$\acute{e}$rard-Varet and E.~Dormy}, {\em {On the ill-posedness of the
  Prandtl equation}}, J. Amer. Math. Soc., 23 (2010), pp.~591--609.

\bibitem{Varet_Masmoudi_2013}
{\sc D.~G$\acute{e}$rard-Varet and N.~Masmoudi}, {\em {Well-posedness for the
  Prandtl system without analyticity or monotonicity}}, arXiv:1305.0221,
  (2013).

\bibitem{Varet_Nguyen_2010}
{\sc D.~G$\acute{e}$rard-Varet and T.~Nguyen}, {\em {Remarks on the
  ill-posedness of the Prandtl equation}}, arXiv:1008.0532v1,  (2010).

\bibitem{Grenier_2000}
{\sc E.~Grenier}, {\em {On the nonlinear instability of Euler and Prandtl
  equations}}, Comm. Pure Appl. Math., 53 (2000), pp.~1067--1091.

\bibitem{Guo_Nguyen_2011}
{\sc Y.~Guo and T.~Nguyen}, {\em {A note on the Prandtl boundary layers}},
  Comm. Pure Appl. Math., 64 (2011), pp.~1416--1438.

\bibitem{Hong_Hunter_2003}
{\sc L.~Hong and J.~K. Hunter}, {\em {Singularity formation and instability in
  the unsteady inviscid and viscous Prandtl equations}}, Commun. Math. Sci., 1
  (2003), pp.~293--316.

\bibitem{Kukavica_Vicol_2013}
{\sc I.~Kukavica and V.~Vicol}, {\em {On the local existence of analytic
  solutions to the Prandtl boundary layer equations}}, Communications in
  Mathematical Sciences, 11 (2013), pp.~267--290.

\bibitem{Li_Wu_Xu_2015}
{\sc W.~Li, D.~Wu, and C.-J. Xu}, {\em {Gevrey class smoothing effect for the
  Prandtl equation}}, arXiv:1502.03569,  (2015).

\bibitem{Liu_Wang_Yang_2014}
{\sc C.-J. Liu, Y.-G. Wang, and T.~Yang}, {\em {A well-posedness theory for the
  Prandtl equations in three space variables}}, arXiv:1405.5308v2,  (2014).

\bibitem{Lombardo_Cannone_Sammartino_2003}
{\sc M.~C. Lombardo, M.~Cannone, and M.~Sammartino}, {\em {Well-posedness of
  the boundary layer equations}}, SIAM J. Math. Anal., 35 (2003),
  pp.~987--1004.

\bibitem{Masmoudi_Wong_2012}
{\sc N.~Masmoudi and T.~K. Wong}, {\em {Local-in-time existence and uniquness
  od solutions to the Prandtl equations by energy method}}, arXiv:1206.3629v1,
  (2012).

\bibitem{Oleinik_1963}
{\sc O.~A. Oleinik}, {\em {The Prandtl system of equations in boundary layer
  theory}}, Soviet Math Dokl, 4 (1963), pp.~583--586.

\bibitem{Oleinik_Samokhin_1999}
{\sc O.~A. Oleinik and V.~N. Samokhin}, {\em {Mathematical Models in Boundary
  Layers Theory}}, Chapman \& Hall/CRC, 1999.

\bibitem{Prandtl_1905}
{\sc L.~Prandtl}, {\em {$\ddot{U}$ber Fl$\ddot{u}$ussigkeitsbewegungen bei sehr
  kleiner Reibung}}, In “Verh. Int. Math. Kongr., Heidelberg 1904”,
  Teubner,  (1905), pp.~484--494.

\bibitem{Safonov_1995}
{\sc M.~V. Safonov}, {\em {The abstract Cauchy-Kovalevskaya theorem in a
  weighted Banach space}}, Comm. Pure Appl. Math., 48 (1995), pp.~629--637.

\bibitem{Sammartino_Caflisch_1998}
{\sc M.~Sammartino and R.~E. Caflisch}, {\em {Zero viscosity limit for analytic
  solutions of the Navier-Stokes equations on a half-space, I. Existence for
  Euler and Prandtl equations}}, Comm. Math. Phys., 192 (1998), pp.~433--461.

\bibitem{Wang_Wang_Xin_2010}
{\sc X.~Wang, Y.~Wang, and Z.~Xin}, {\em {Boundary layers in incompressible
  Navier-Stokes equations with Navier boundary conditions for the vanishing
  viscosity limit}}, Commun. Math. Sci., 8 (2010), pp.~965--998.

\bibitem{Wang_Xie_Yang_2014}
{\sc Y.-G. Wang, F.~Xie, and T.~Yang}, {\em {Local well-posedness of Prandtl
  equations for compressible flow in two space variables}}, arXiv:1407.3637,
  (2014).

\bibitem{Xin_Zhang_2004}
{\sc Z.~Xin and L.~Zhang}, {\em {On the global existence of solutions to the
  Prandtl's system}}, Adv. Math., 181 (2004), pp.~88--133.

\bibitem{Zhang_Zhang_2014}
{\sc P.~Zhang and Z.~Zhang}, {\em {Long time well-posedness of Prandtl system
  with small and analytic initial data}}, arXiv:1409.1648v1,  (2014).

\end{thebibliography}

\end{document}